\documentclass[10pt]{amsart}

\makeatletter
\def\@settitle{\begin{center}%
		\baselineskip14\p@\relax
		\normalfont\Large
		\@title
	\end{center}%
}
\makeatletter
\def\@settitle{\begin{center}%
		\baselineskip14\p@\relax
		\normalfont\Large
		\@title
	\end{center}%
}
\makeatother

\usepackage{graphicx,epsfig}
\usepackage{mathabx,epsfig}
\usepackage{bbm}
\usepackage[title]{appendix}
\usepackage{esvect}
\usepackage{tikz-cd}
\usepackage{fancyhdr}
\usepackage{amsfonts}
\usepackage{amsmath}
\usepackage{amssymb}
\usepackage{mathabx}
\usepackage{amsthm}
\usepackage{tikz-cd}
\usepackage{parskip}
\usepackage{times} 

\usepackage{mathrsfs}
\usepackage{scalerel,amssymb}

\usepackage{amsmath,amssymb,latexsym, amsfonts, amscd, amsthm, xy}
\input{xy}
\xyoption{all}

\input{xy}
\xyoption{all}


\makeindex \setcounter{tocdepth}{2}

\voffset = -20pt \hoffset = -60pt \textwidth = 520pt \textheight =650pt \headheight = 8pt \headsep = 10pt
\voffset = -20pt
\hoffset = -80pt


\usepackage{fancyhdr}
\usepackage{amsfonts}
\usepackage{amsmath}
\usepackage{amssymb}

\usepackage{amsthm}
\usepackage{tikz}
\usepackage{tikz-cd}
\usepackage{parskip}
\usepackage{times} 

\usepackage[style=alphabetic,backend=biber]{biblatex}

\usepackage{hyperref}
\hypersetup{pdftoolbar=true, pdftitle={GKform}, pdffitwindow=true, colorlinks=true, citecolor=green, filecolor=black, linkcolor=blue, urlcolor=blue, hypertexnames=false}
\usepackage{cleveref}
\makeindex \setcounter{tocdepth}{2}

\addbibresource{toric.bib}

\newtheorem{thm}{Theorem}[section]
\newtheorem{prop}[thm]{Proposition}
\newtheorem{lem}[thm]{Lemma}
\newtheorem{cor}[thm]{Corollary}

\newtheorem{defn}[thm]{Definition}
\theoremstyle{definition}
\newtheorem{rem}[thm]{Remark}
\newtheorem{ex}[thm]{Example}
\newcommand{\Z}{\mathbb{Z}}
\newcommand{\cZ}{\mathcal{Z}}
\newcommand{\Q}{\mathbb{Q}}
\newcommand{\C}{\mathbb{C}}
\newcommand{\R}{\mathbb{R}}

\newcommand{\F}{\mathbb{F}}
\renewcommand{\c}{\mathfrak{c}}
\renewcommand{\d}{\mathfrak{d}}
\renewcommand{\b}{\mathfrak{b}}

\newcommand{\D}{\mathbf{D}}

\newcommand{\sgn}{\mathrm{sgn}}
\newcommand{\cS}{\mathcal{S}}
\newcommand{\St}{\mathrm{St}}
\newcommand{\OS}{\mathrm{OS}}
\newcommand{\Chains}{\mathrm{Chains}}

\newcommand{\G}{\mathbb{G}}
\renewcommand{\P}{\mathbb{P}}

\newcommand{\SL}{\mathrm{SL}}
\newcommand{\GL}{\mathrm{GL}}
\numberwithin{equation}{section}
\title{Symbols for toric Eisenstein cocycles and arithmetic applications}
\author{Peter Xu}

\pagestyle{fancy}
\makeatletter
\let\runauthor\@author
\let\runtitle\@title
\makeatother
\lhead{\emph{XU}}
\rhead{\emph{SYMBOLS FOR TORIC EISENSTEIN COCYCLES}}
\usepackage{setspace}
\raggedbottom
\begin{document}

        \makeatletter
\@setabstract
\makeatother
\maketitle

\begin{abstract}
    Using a complex parameterizing rational spherical chains, we construct explicit cocycles for $\mathrm{GL}_n(\Q)$ valued in the motivic cohomology of (open subsets of) the algebraic $n$-torus $\mathbb{G}_m^n$. The resulting cocycles directly generalize the work of Sharifi and Venkatesh from the case $n=2$ \cite{SV}. Even in this special case, our systematic use of pushforwards allows us to avoid the use of their ``connecting sequences,'' and allows us to refine the construction and Hecke properties of the Sharifi map $\varpi$ to the maximal expected statements, while inverting only the prime $2$. For general $n$, the $d\log$ regulator of our cocycle is related by convex conical duality to cocycles constructed from Shintani cones. This affords a systematic approach to $p$-adic $L$-functions for totally real fields without need for auxiliary data or logarithm sheaf coefficients, including a distribution-valued $\GL_n(\Z)$-cocycle specializing in a simple way to all such $p$-adic $L$-functions. It moreover provides a direct conceptual link between polylogarithmic constructions of Eisenstein classes (e.g., in \cite{BKL}), and those constructed using Shintani cones (e.g., in \cite{CDG}). We also show how our formalism gives an alternate proof of the exceptional divisibilities of the Deligne-Ribet $2$-adic $L$-function in almost all cases.
\end{abstract}

\tableofcontents

    \section{Introduction}

    The degree-$n$ Milnor $K$-theory of a field $F$, denoted $K_n^M(F)$, is a quotient of $F^\times \otimes \ldots \otimes F^\times$ by certain (``Steinberg'') relations, with a general element written $\{f_1,\ldots, f_n\}$ for $f_1,\ldots, f_n\in F^\times$. When $F=\Q(z_1,\ldots, z_n)$, which we think of as the fraction field of the $n$-fold power of the multiplicative group $\G_m^n$, the simple-looking elements given by $\GL_n$-translates of 
    \[
        \{1-z_1,\ldots, 1-z_n\},
    \]
    contain surprisingly interesting arithmetic information: for $n=1$, specializing them at $N$th-roots of unity leads to $N$-cyclotomic units, and when $n=2$, similar specializations are also important in cyclotomic Iwasawa theory, and in particular the Sharifi conjectures. For $n>2$, the specializations in $K$-theory are not as interesting, but one can formally take the coordinate-wise logarithmic derivative of these Milnor $K$-theory elements to get generating series which look like
    \[
    \frac{e^{2\pi i (t_1+\ldots + t_n)}}{(1-e^{2\pi i t_1})\ldots(1-e^{2\pi i t_n})}
    \]
    where $z_i=e^{2\pi i t_i}$, $1\le i \le n$, which are generating series of generalized Bernoulli numbers. For $n=1$, the Mellin transform of this series is used classically in the integral representation of the Riemann zeta function
    \[
    \xi(s):=\Gamma(s/2)\zeta(s) = \int_0^\infty \frac{x^{s}}{e^x-1}\,\frac{dx}{x}
    \]
    and thereby to prove the functional equation and find rational formulas for its values at negative integers; in the general case, this method was generalized by Shintani to to $L$-values of totally real fields of degree $n$ \cite{Shin}, using a formalism which viewed these series as generating series associated to rational polyhedral cones in $\R^n$ (which is often called the ``Shintani method'').
    
    In \cite{SV}, with applications to the Sharifi conjectures in mind, the authors construct $1$-cocycles for $\mathrm{GL}_2(\mathbb{Z})$ and its congruence subgroups, taking values in elements like $\{1-z_1,1-z_2\}$ in the second Milnor $K$-group $K_2^M(k(\mathbb{G}_m^2))$. In this article, we generalize their approach by constructing analogous cocycles for (subgroups of) $\mathrm{GL}_n(\mathbb{Q})$ valued in $K_n^M(k(\mathbb{G}_m^n))$, or various refinements/specializations thereof. When $n=2$, our formalism enables us to to improve on their results and obtain new integrality and Hecke equivariance properties for one of the maps in the Sharifi conjectures, while at the same time being more geometrically-flavored and requiring fewer technical computations.
    
    For $n>2$, the $d\log$ regulator of our cocycle yields a new construction of an ``Eisenstein cocycle'' containing a large amount of interesting arithmetic information, closely related to many previous cocycles appearing in the literature. In particular, our cocycles bear a ``conical dual'' relation to cocycles constructed based on Shintani's techniques, which we will exploit to give a systematic treatment of the integrality properties of $L$-values for totally real fields, including a conceptual proof of ``exceptional`` $2$-adic divisibilities previously only known by explicit automorphic computations.

    \subsection{Relation to existing work}
    
    The primary inspiration for the methods of this article was the motivic $\GL_2(\Z)$-Eisenstein cocycle defined by \cite{SV}: our main symbol construction can in large part be seen as an extension of the method of \cite[\S5]{SV} from $n=2$ to general $n$, though they use toric geometry in place of Bloch cycles to prove relations, and do not work with pushforwards. In the arithmetically interesting case $n=2$, we are able to refine their result by computing that the Hecke action is the expected one on a certain modular symbol, rather than just a cohomology class. In particular, we use a new method to obtain a Hecke-equivariant map on homology of the modular curve relative to cusps, rather than on the closed modular curve. Using our tools, we are also able to directly compute the Hecke actions at primes dividing the level while only inverting the prime $2$, improving on both Sharifi-Venkatesh's results as well as the work of Lecouturier-Wang \cite{LcW} on operators dividing the level.
    
    Also closely related to the present work is the approach of Shintani \cite{Shin} using rational generating functions associated to rational polyhedral cones, originally formulated in a non-cohomological way to compute $L$-values of totally real fields. Eisenstein cocycles built on this approach have previously appeared in the literature, e.g. in \cite{SH}, \cite{CD} and \cite{CDG}, \cite{Hill}, and \cite{LP}, but our approach is more systematic: besides being equally applicable to the motivic setting, we are able to quotient by very few relations in the coefficients, and do not need auxiliary data like lexicographic orderings (as in Richard Hill's approach \cite{Hill}) or perturbations (as in the cocycle of Charollois-Dasgupta-Greenberg \cite{CDG}, after an idea of Colmez); this extra data implicitly appears only in the ``degenerate'' values of a choice of cocycle representative. 
    
    We achieve this by working with a symbol complex of \emph{spherical chains}, whose realizations are related to \emph{pushforwards} of generating functions of the \emph{dual} cones to those considered by Shintani. In fact, our symbol complex construction is more directly tied to ``polylogarithmic'' constructions of Eisenstein classes, as in the work of Beilinson-Kings-Levin \cite{BKL} as well as Bergeron-Charollois-Garcia \cite{BCG} \cite{BCGV}, a term we use to refer to Eisenstein classes constructed by specifying residues along torsion cycles in some algebraic or topological group. We make this relationship precise in Appendix \ref{appendix:a}. The terminology ``polylogarithmic'' reflects that the more general versions of these cocycles (as in \cite{BKL}) use coefficients in a large ``logarithm'' sheaf in order to obtain a direct relation to higher-weight specializations of Eisenstein series, which is necessary to prove the higher-weight interpolation for the corresponding constructions of $p$-adic $L$-functions. (We also note that \cite{BHYY} relatedly constructs a equivariant polylogarithm class in \emph{Deligne} cohomology of the torus corresponding to a totally real field, which as noted in Remark \ref{rem:eqpoly}, should be the restriction of the Deligne regulator of our motivic class $\Theta(n)$ upon projection to constant coefficients.)
    
    These polylogarithmic-type constructions generally are closely linked to actual analytic Eisenstein series representatives, while the connection between Shintani-style constructions and actual Eisenstein series is largely more indirect. For example, Cassou-Noguès' \cite{CN} approach to $p$-adic $L$-functions using Shintani's method produces the same objects as the approach of Deligne-Ribet \cite{DR} using Hilbert-Eisenstein families, since they interpolate the same $L$-values, but the conceptual link between the two approaches is not clarified by this. By producing a cocycle which is both polylogarithmic (from Appendix \ref{appendix:a}) and related by conical duality to Shintani generating series, we bridge this gap. The relation to Shintani's method also allows us to prove the interpolation property of our $p$-adic $L$-functions at higher weights using only trivial coefficients, rather than logarithm sheaf.

    We also bridge a gap between the $p$-adic congruences obtained by \cite{DR} and \cite{CN}: the former obtains extra $2$-adic divisibilities coming from a computation of the constant term of Hilbert-Eisenstein series. The issue of whether these could be ``seen'' by a Shintani cone approach was already raised in \cite{DR}; this was also considered by Gross \cite{Gro}, who put it into an equivariant algebraic framework, and very recently by Colmez \cite{Col}, who obtained partial progress (see Théorème 3.22, Remarque 3.23 in loc. cit). Our algebraic setup leads to a proof of the full $2$-adic divisibility, except in the particularly delicate case of unramified totally odd characters at weight $s=0$; see Theorem \ref{thm:2adic}.
    
    Finally, to review the relation of our cocycle with a few others not yet mentioned: in the motivic setting, Lim and Park \cite{LP} based off an idea of Stevens \cite{Stevens2}, also construct a cocycle valued in Milnor $K$-theory of a ring of ``trigonometric functions''. Their construction is a complex-analytic and ``infinite level'', with their ring of trigonometric functions corresponding to the functions on a pro-tower of algebraic tori over all finite isogenies; however, like other previous Shintani-style approaches \cite{CDG} and \cite{Hill}, their construction is ``conical dual'' to ours, and involves addditional quotients by relations. The duality relation they exhibit in \cite[\S3]{LP}, between the regulator of their motivic cocycle and a naive version of the Shintani cocycle of \cite{Hill}, inspired our understanding of the conical duality linking our construction to previous ones in the literature. 

    \subsection{Structure of the paper}

        We briefly the structure of the paper: the preliminaries consist of a review of needed facts on motivic cohomology in Section \ref{section:motivic}, followed by the construction of the symbol complex and resulting cocycles in Sections \ref{section:symbols} and \ref{section:comb}. Section \ref{section:motivictheta}, on the motivic cocycle and applications, and Section \ref{section:derhamtheta}, on the de Rham cocycle and applications, are mostly independent; readers interested in one or the other may safely skip around and refer back to the preliminary sections as needed. The only major exception is that existence of the de Rham cocycle is proven using Theorem \ref{thm:gerssym} in the motivic section, since we construct the de Rham cocycle as the regulator of the motivic one.

    \subsection{Summary of methods and results}
    We now give more precise formulations of our results and arguments, to the extent possible without introducing excessive technical background. 
    
     Our main initial construction is:
    \begin{thm} \label{thm:thm1}
    There exists a Milnor $K$-theory-valued cohomology class
    \[
    \Theta(n) \in H^{n-1}(\GL_n(\Q), K_n^M(\Q(z_1^{\pm},\ldots, z_n^{\pm}))/\{-z_1,\ldots, -z_n\})
    \]
    with a homogeneous cocycle representative given by
    \[
        (\gamma_1,\ldots, \gamma_n) \mapsto \begin{pmatrix}\ell_1 & \ldots & \ell_n\end{pmatrix}_* \{1-z_1,\ldots, 1-z_n\}
    \]
    for any $\gamma_1,\ldots, \gamma_n$ whose respective first columns $(\ell_1,\ldots, \ell_n)$ are linearly independent.
    \end{thm}

    This theorem was previously proven for $n=2$ in \cite{SV}. Our method, which is a variant of the approach in \cite[\S5]{SV}, is to construct an explicit $\GL_n(\Q)$-module of \emph{symbols} $C(n)$ parameterizing elements in $K_n^M(\Q(z_1^{\pm},\ldots, z_n^{\pm}))/\{-z_1,\ldots, -z_n\}$. We refer to these as symbols, as they are in an explicit combinatorial way and are easy to write down.

    The module $C(n)$ is constructed as the top-degree chains of a chain complex computing the homology of the $(n-1)$-sphere, modulo the fundamental class, which we refer to as the symbol complex. The $\GL_n(\Q)$-action on the resulting length-$(n+1)$ exact complex yields explicit $(n-1)$-cocycles via a lifting process, all representing the cohomology class. Readers interested only in arithmetic applications may safely skip the details, referring only when needed to the characterization of the symbol complex $\Chains(n)$ in Proposition \ref{prop:simplesub}. 
    
    Most of the work of constructing $\Theta(n)$ is in proving that the relations are satisfied in Milnor $K$-theory, which we prove by constructing explicit corresponding boundaries in a cohomology complex (the \emph{Bloch cycle complex}) computing Milnor $K$-theory. The details of this realization are in section \ref{section:realization}. Notably, our systematic use of \emph{pushforwards} in the realization maps allows us both to extend our cocycle to $\GL_n(\Q)$ (rather than just $\GL_n(\Z)$), and to avoid the technicalities of the ``connecting sequences'' used in \cite{SV}, as we can work directly with non-unimodular symbols instead of needing to decompose them into unimodular ones. This also enables our simpler treatment of Hecke actions (see below).
    
    As in \cite[\S5]{SV}, we also consider variants of the construction in this section, there are various methods by which one can remove the necessity of quotienting by $\{-z_1,\ldots, -z_n\}$, (though usually one gives up something else; e.g. inverting primes). Conversely, by quotienting out by \emph{more} relations, one can obtain an \emph{parabolic} version of $\Theta(n)$. Our use of symbol complexes makes it possible to understand systematically what modifications are required for each of these variants.

    \subsubsection{Application to Sharifi maps}
    In particular, a modular symbol variant of the cocycle has the following application: in the case $n=2$, Sharifi and Venkatesh use their parabolic cocycle to construct and prove the Eisenstein property of one of the maps which figures in the eponymous Sharifi conjectures, relating the homology of modular curves and algebraic $K$-theory of rings of integers in cyclotomic fields. Their methods produce the desired map in the form
    \[
    \Pi_N: H_1(X_1(N),\Z) \to K_2(\Z[1/N,\zeta_N]) \otimes \Z[1/2]
    \]
    which is Eisenstein for the anemic Hecke operators only, i.e. factors through the quotient by the operators $T_\ell - 1-\ell\langle \ell\rangle$ for $\ell \nmid N$. The Sharifi conjectures concern an explicitly constructed map 
    \[
    \Pi_N^\circ: H_1(X_1(N),C_1^\circ(N),\Z) \to  K_2(\Z[1/N,\zeta_N]) \otimes \Z[1/2], \;\;\;[u:v]\mapsto \{1-\zeta_N^u, 1-\zeta_N^v\}
    \]
    on the larger cohomology group relative to the set of cusps $C_1^\circ(N)$ not in the $\Gamma_0(N)$-orbit of $\infty$, and work of Fukaya-Kato \cite{FK} showed that for $p|N$, the complex conjugation-fixed (``plus'') version of this map, denoted by subscript $+$,
    \[
    (\Pi_N^\circ)_+\otimes \Z_p: H_1(X_1(N),C_1^\circ(N),\Z_p)_+ \to  (K_2(\Z[1/N,\zeta_N]) \otimes \Z_p)_+
    \]
    given by the formula
    \[
    [u:v]\mapsto \{1-\zeta_N^u, 1-\zeta_N^v\}_+ := \frac{1}{2}(\{1-\zeta_N^u, 1-\zeta_N^v\}+ \{1-\zeta_N^{-u}, 1-\zeta_N^{-v}\})
    \]
    factored through not only the anemic Eisenstein operators $T_\ell - 1-\ell\langle \ell\rangle$, but also $U_p - 1$ whenever $p | N$. Thus, Sharifi-Venkatesh's work improved on this by constructing an anemically Eisenstein map with $\Z[1/2]$-coefficients, and not only on the plus part. However, they were unable to compute the action of the Hecke operators dividing the level, or to define it on the larger relative cohomology group as in $\Pi_N^\circ$. Later, \cite{LcW} combined the methods of Sharifi-Venkatesh and Fukaya-Kato to show that the map $\Pi_N\otimes \Z[1/6]$ factors through the non-anemic Hecke operators $U_\ell - 1$ for $\ell|N$, again only on the plus part.

    After ``spreading out'' the pullback of the modular symbol variant of our cocycle $\Theta^{MS}(2)$ in order to pull it back by an $N$th root of unity, we are able to obtain the improved result:

    \begin{thm} \label{thm:sharifi}
        The map
        \[
        (\Pi_N^\circ)_+: H_1(X_1(N),C_1^\circ(N),\Z) \to K_2(\Z[1/N,\zeta_N]) \otimes \Z[1/2]
        \]
        factors through both $T_p^* - p-\langle p\rangle$ for $p \nmid N$ and $U_p-1$ for $p|N$. Further, the diamond operator $\langle d \rangle$ acts as pullback by the Galois automorphism $[d]:\zeta_N\mapsto \zeta_N^d$ for all $d\in (\Z/N)^\times$. 

        Additionally, if $N$ is a power of a prime, or if we further restrict the set of cusps to exclude $\Gamma_0(p)$-orbit of $\infty$ for \emph{all} $p|N$, then all of this holds even for the map $\Pi_N^\circ$, \emph{without} projecting to the $+$ part. 
    \end{thm}

    The point is that these relative homology groups are naturally $\Gamma_1(N)$-subquotients of the Steinberg module $\St(2)$, consisting of mass-zero finite functions on $\P^1(\Q)$; this, in turn, is a $\GL_2(\Q)$-equivariant quotient of our spherical chains module $C(n)$. Our realization map allows us to understand precisely what elements in $K$-theory we need to quotient by to have these extra Steinberg relations, resulting in a $\GL_n(\Q)$-equivariant modular symbol 
    \[
    \Theta^{MS}(2):\St(2)\to K_2(k(\G_m^2))/\text{extra relations}
    \]
    and which pulls back, upon restriction to $\Gamma_1(N)$-level, to a modular symbol
    \[
    \St(2)_{\Gamma_1(N)} \to K_2(\Z[1/N,\zeta_N])/\text{extra relations} \otimes \Z[\tfrac{1}{2}]
    \]
    where these extra relations are trivial with $\Z[\frac{1}{2}]$-coefficients upon projection to the plus part (or are already zero, in the special case noted in the theorem).
    
    Moreover, the Hecke action can be computed directly on the modular symbol $\Theta^{MS}(2)$ (without ever passing to a cohomology class), in terms of a simple geometric operation on the torsion sections used to pullback. This enables us to compute the Hecke action at all primes; we find the Eisenstein property holds if one restricts the set of cusps as indicated in the theorem. We further give a geometric interpretation and proof of the $N$-integrality of a restriction of $\Pi_N^\circ$ to the homology of the closed curve $X_1(N)$, which previously had been proven computationally \cite[Lemma 4.2.7]{SV} \cite[Lemma 3.3.11]{FK}. 

    \subsubsection{Application to $L$-values}
    We now turn to the \emph{regulator} of $\Theta(n)$, i.e. its composition with the map
    \[
    \{f_1,\ldots, f_n\} \mapsto d\log f_1 \wedge \ldots \wedge d\log f_n \in \Omega^n_{k(\G_m^n)}.
    \]
    After contracting with the $\SL_n(\Q)$-invariant vector $\partial z_1 \wedge \ldots \wedge \partial z_n$, the resulting de Rham cocycle $\Theta^{dR}(n)$ is valued in precisely the same kind of generating functions associated by Shintani \cite{Shin} to arrangements of cones, closely related to $L$-functions of totally real fields. However, the generating function associated to the class of a rational simplex $\Delta\subset S^{n-1}$ on the $(n-1)$-sphere is \emph{not} the generating function of the associated (open) cone, but to its \emph{dual} cone: that is, formally, to the sum
    \[
    (t_1,\ldots, t_n)\mapsto \sum_{\substack{x\in \Z^n\\ \langle x,\R_{\ge 0}\Delta\rangle>0}} e^{2\pi i \langle x, t\rangle }
    \]
    rather than 
    \[
    (t_1,\ldots, t_n)\mapsto \sum_{x\in \Z^n\cap \R_{>0}\Delta} e^{2\pi i \langle x, t\rangle},
    \]
    where $\langle-,-\rangle$ is the usual inner product, and we identify a region of $S^{n-1}$ with the space of rays in its $\R^+$-span. In \cite{CDG}, for example, the Shintani generating function associated to the anisotropic torus associated to a totally real field $F$ of degree $n$ is of the latter form (and also involves delicate choices of lower-dimensional cones), while the specialization of our $\Theta^{dR}(n)|_{\mathcal{O}_F^\times}$ is of the former. However, we are able to show that $L$-value specializations of the latter type of series are the same as those coming from ours $\Theta^{dR}|_{\mathcal{O}_F^\times}$, if one takes the \emph{inverse transpose} of the embedding $\mathcal{O}_F^\times\hookrightarrow \GL_n(\Z)$: this reflects the underlying conical duality, and involves a technical argument using stabilized versions of our symbol complex at an auxiliary prime, as well as a geometric incarnation of conical duality in the Tits building of $\GL_n$ (see Appendix \ref{appendix:b}). 
    
    As a result of the smoothing, we are able to pull back $\Theta^{dR}(n)$ along \emph{all} torsion sections, and hence obtain a single class valued in distributions which we can show specializes to all different $p$-adic $L$-functions associated to totally real fields of degree $n$: 

    \begin{thm}[Theorem \ref{thm:interp}] \label{thm:distros}
        For each integer $c>1$ prime to a prime $p$, there exists a cohomology class valued in a certain space of $p$-adic distributions (see Section \ref{section:fourier} and the formula \eqref{eq:getzeta} for notational details)
        \[
        {}_c\Theta^{dR}_{\D_p}(n)\in H^{n-1}(\GL_n(\Z),\hat{\D}_0(\Q_p^n, \Z_p)(\sgn)^{(0)}),
        \]
        such that if $F$ is a totally real field with $[F:\Q]=n$ and $\alpha:I\xrightarrow{\sim} (\Z^n)^\vee$ is a choice of dual basis for a fractional ideal $I\subset F$, then the restriction of
        ${}_c\Theta^{dR}_{\D_p}(n)$ to the resulting embedded copy of $U^+:=(\mathcal{O}_F^\times)^+\hookrightarrow \GL_n(\Z)$ satisfies
        \[
            \int_{(\Q_p^n)^\vee} \psi(\alpha^{-1}(t))\mathbf{N}^F_\Q(\alpha^{-1}(t))^k\, d({}_c\Theta^{dR}_{\D_p}(n)|_{U^+} \frown c_{U^+}) = \zeta_{I}(\psi, -k)-c^{n}\zeta_{(c)I}(\psi, -k) 
        \]
        for any function $\psi:I^+/U^+\to \overline{\Q_p}$ with $\psi(0)=0$, locally constant in the $p$-adic topology.\footnote{For reasons of brevity, we do not consider adding tame level - i.e., prime-to-$p$ level - to our $p$-adic $L$-functions; however, the same constructions all work in this generality as well.} Here $c_{U^+}$ is a fundamental class of $U^+\cong \Z^{n-1}$, positively oriented in an appropriate sense, and 
        \[
        \zeta_{I}(\psi, -k) := \left(\sum_{\lambda \in I^+/U^+} \psi(\lambda)\mathbf{N}\lambda^{-s}\right)_{s=-k}
        \]
        is a partial zeta value for $I$.
    \end{thm}
    The technical condition $\psi(0)=0$ means that unlike \cite{DR} or \cite{CN}, but like Kubota-Leopoldt's original construction of their $p$-adic zeta function for $\Q$, our method does not interpolate $L$-values of narrow Hilbert class characters at $s=0$ only; by the Stark conjectures, the information in these values is essentially the size of the $\psi$-class group for totally odd narrow Hilbert class characters $\psi$ (as for non-totally odd characters, the $L$-value vanishes). This appears to be an inherent limitation of our method of construction, though we would be interested to find otherwise. 
    
    \begin{rem}\label{rem:totallyodd}
        When $n$ is odd, there are no totally odd characters at all, and so all $L$-values vanish at $s=0$; hence, in this case we are not ``losing any information''. Interestingly, in this case, there is also a very natural way to canonically lift over the orientation obstruction in the construction of $\Theta^{S^{n-1}}(n)$ by symmetrizing it to be $[-1]$-invariant, since $[-1]$ is orientation-reversing. 
    \end{rem}

    The fact that we use Shintani's method (in Section \ref{section:cones} onward) to prove the $p$-adic interpolation is of note, when juxtaposed with our ``polylogarithmic''-style construction (see Appendix \ref{appendix:a}) of our classes. Previously, \cite{BKL} constructed $p$-adic $L$-functions for totally real fields using Eisenstein classes constructed polylogarithmically on topological tori $(S^1)^n$, but needed coefficients in the logarithm sheaf to prove the interpolation property at higher weights; the direct relationship with Shintani's method allows us to avoid this.

    Finally, we remark that we can also obtain $S$-arithmetic or even $\GL_n(\Q)$ versions of this class, and representing cocycles (maybe after inverting primes in the coefficients). We do not go into $S$-arithmetic applications in this article, so this is only covered in passing; however, for example, we expect it to give an alternative approach to the rigid analytic cocycles of \cite{RX2}.

    The construction of $p$-adic $L$-functions by Deligne-Ribet \cite{DR} also proved extra divisibilities at the prime $2$ for certain $L$-values ``with parity''; it remained an open question whether approaches using Shintani cones (like \cite{CN}, contemporaneously) could ``see'' these extra congruences; for example, this was considered by Gross \cite[Proposition 5.4]{Gro} (who formulated it in algebraic terms quite related to our distributions) and Colmez \cite[Théorème 3.22]{Col} (who proved a slightly weaker statement). We prove the full expected congruence for the values we are able to interpolate:
    
    \begin{thm}[Theorem \ref{thm:2adicbody}, analogue of (8.11--12) in \cite{DR}] \label{thm:2adic}
        Let $\psi:G_{p^\infty}\to \overline{\Q_p}$ be a totally odd continuous function on the $p^\infty$-ray class group of $F$. Then, for our $2$-adic zeta element ${}_\c\zeta_2^F$ constructed in Section \ref{section:padic}, we have the congruence
        \[
        \int_{G_{p^\infty}} \psi(J)\, d{}_\c\zeta_2^F(J) \equiv 0 \pmod{2^n}.
        \]
    \end{thm}
    
    This theorem follows in a very formal way from our setup; as such, by comparison with the method of \cite{DR}, one could view it as a ``cohomological explanation'' for the appearance of $2^n$ in the constant term of Hilbert-Eisenstein series. As in \cite{DR}, we in fact prove the theorem for more general functions on a monoid of ideals strictly larger than the $p^\infty$-ray class group. Our  ``vanishing at zero'' restriction on interpolated values means that the only cases we miss are when $\psi$ is a linear combination of totally odd narrow Hilbert class character (which is not a case included in the simplified statement of the theorem above.)
    
    \subsection{Future work}

    The basic methods (parameterization by linear algebraic complexes of symbols) of this article apply, though with some substantial changes, to the setting of elliptic polylogarithms: for example, the action of $\mathrm{GL}_n/\mathbb{Q}$ on the $n$th power of the universal elliptic curve, or $\mathrm{GL}_n/K$ on the $n$th power of an elliptic curve with CM by $K$. This elliptic case will be the subject of a sequel to this article. Working analytically over $\mathbb{C}$, certain specializations of the former setting yield (after pullback by torsion sections) the Siegel unit-valued modular symbols constructed in the recent work of \cite{BPPS}. This elliptic case has analogous applications to the arithmetic of totally real fields, as well as to the Sharifi conjectures for the pairs ($\GL_3(\Q),\GL_2(\Q)$) and ($\GL_2(K),\GL_1(K)$) in the case of a CM field $K$, just as the present article deals with the case ($\GL_2(\Q),\GL_1(\Q)$). In the setting of Drinfeld modules for a function field $F/\F_q$, we also use symbol complex methods to attack the case $(\GL_n(F), \GL_{n-1}(F))$, in forthcoming joint work with the the first author of \cite{SV}.

    Another question posed by \cite{Katz} is the relation between Shintani cones and toroidal compactifications of Hilbert-Blumenthal varieties; via Hirzebruch's conjecture (now a theorem), this also is closely related to the same $L$-values studied by Deligne-Ribet (as well as the present article). We would be interested in a group theoretic/geometric answer to this question, which seems related to our cohomological methods.

    Finally, we do not go into the relation of our cocycles to Gross-Stark units in this article, but it may be of interest to write down the relationship carefully. As noted in Remark \ref{rem:grossstark}, we believe that our ``de Rham cocycle'' results in the same (smoothed) distributions for totally real fields as one obtains from \cite{CDG}, for example, and thus should yield the same now-proven formulas for Gross-Stark units of \cite{DS} (thanks to the work of \cite{DKSW}). These other existing cocycles are already perfectly sufficient for explicit class field theory, but the very natural algebraic formulation of integrality properties in our approach could possibly offer benefits of exposition, at least. 
    
    We \emph{do} write down the relationship with Gross-Stark units of a closely-related cocycle (from the polylogarithm of a \emph{topological} torus) in the joint work \cite{RX}, as well as its relation to the rigid analytic cocycles of \cite{DV}; in joint work in preparation, we will relate that polylogarithmic cocycle to the one in this article.

    \subsection{Acknowledgements}

    Many thanks to Nicolas Bergeron, Marti Roset Julia, Romyar Sharifi, Timothy Smits, Jeehoon Park, and Pierre Charollois for open ears, discussions, and suggestions.

    \section{Preliminaries} \label{section:prelim}

    \subsection{Motivic cohomology}\label{section:motivic}
	In this section, we recall some needed facts about motivic cohomology and Milnor $K$-theory. For a smooth equidimensional scheme $X$ over a discrete valuation ring (DVR) $R$, we will compute (or, for the purposes of this article, \emph{define}) the motivic cohomology $H^i(X, \mathbb{Z}(n))$ as the cohomology $H^i(X,\mathbb{Z}(n)_X)$ of Bloch's weight-$n$ \emph{cubical} cycle complex $\mathbb{Z}(n)_X$; this is the approach followed in \cite{Tot} and \cite{GL}, for example, over a field.

    The Bloch cycle complex is defined as follows: let
	\[
	\tilde{z}^n(X, i) := Z^n(U\times \Box^{i})
	\]
	be the group of codimension-$n$ cycles on $X\times \Box^{i}$ meeting all faces properly. Here, $\Box^i$ is the algebraic $i$-cube which we identify with $(\mathbb{P}^1-\{1\})^n$ and the $j$th face map is given by the difference of the pullbacks to the subvarieties cut out by $t_j=0$, respectively $t_j=\infty$; the alternating sum of face maps gives, as usual, a differential from $\tilde{z}^n(X, i)$ to $\tilde{z}^n(X, i-1)$.\footnote{In some conventions, $\Box^i$ is identified with $\mathbb{A}^i$, and the face maps are given by $t_j=0,1$ instead of $0,\infty$. This differs from our convention by a Möbius transformation; ours is the convention used by \cite{Tot}, and we we find it to be more convenient for later applications.} It turns out that the resulting complex splits into a direct sum
	\[
	\tilde{z}^n(X, i) = d^n(U,i)\oplus z^n(X, i)
	\]
	where the former summand consists of \emph{degenerate} cycles which can be pulled back from one of the faces of $\Box^i$ given by $t_j=0$, and the latter summand consists of the \emph{reduced cycles} which are in the kernel of of the restriction to each face $t_j=0$. We define 
	\[
	(\mathbb{Z}(n)_X)^i:=z^n(X, 2n-i).
	\]
	This complex is suitably functorial for flat pullbacks and proper pushforwards.
    
    More generally, over a Dedekind scheme base, we can define motivic cohomology as the \emph{hypercohomology} of the Zariski sheafification of the Bloch complex: the fact that this coincides with our above (non-hypercohomology) definition for the Bloch cycle complex was proven for fields as \cite[Corollary 12.2]{FS}, and for DVRs in \cite{Lev2}; consult these sources for details. The only fact we need about this broader definition is the existence in this generality of a localization sequence:

    \begin{prop}[\cite{Geisser}] \label{prop:loc}
        If $Z\subset X$ is a pair of smooth schemes embedded with pure codimension $d$, over a Dedekind domain, then there is a localization sequence
        \[
        \ldots \to H^{i-2d}(Z,\Z(n-d))\to H^i(X,\Z(n)) \to H^i(X-Z, \Z(n)) \to H^{i+1-2d}(Z, \Z(n-d)) \to \ldots 
        \]
    \end{prop}

    This will be used to extend results about pullbacks of our motivic cocycles to bases larger than DVRs.
    
    \begin{rem}
        This article is not concerned with the minutiae of motivic constructions, so we only remark briefly on the technical details we are glossing over in the above definition, in order to provide justifying references: first, this construction should more properly be called Borel-Moore motivic homology; historically, Bloch called them ``higher Chow groups''. For smooth schemes over a field, higher Chow groups agree with the usual modern construction of motivic cohomology defined via Voevodsky-style motivic complexes, thanks to the results in \cite{V} and \cite{FS}. Second, simplicial language is more standard than cubical (as in, e.g., \cite{Geisser}), but the two approaches are equivalent for formal reasons, as proven in \cite{GL} over a field (though the proof works also over a DVR). 
    \end{rem}
    
    When $X=\mathrm{Spec }\,F$, then \cite{Tot} proves that there is a natural isomorphism
    \[
    \psi_{X}^n: H^n(X,\Z(n)) \xrightarrow{\sim} K_n^M(F)
    \]
    between the degree-$n$, weight-$n$ motivic cohomology and \emph{Milnor $K$-theory}, which we recall is given in degree $n$ by 
    \[
    K_n^M(F) := (F^\times \otimes \ldots \otimes F^\times )/ I
    \]
    where $I$ is the degree-$n$ part of the ideal in the free tensor algebra on $F^\times$ generated by $x\otimes (1-x)$ for non-identity $x\in F^\times$. This map is given on the level of the Bloch cycle complex as follows: for the class of an irreducible closed subvariety $[Z]\in z^n(X, n)$, write $p:Z\to X$ for the projection map; then we set 
		\begin{equation}\label{eq:psimap}
			\psi_X^n(z):= p_*([t_1\otimes \ldots \otimes t_{n}]) \in K_{n}(F)
		\end{equation}
	where $p_*$ is the finite pushforward (also called \emph{transfer map}) in Milnor $K$-theory (as defined in \cite[\S5]{BaTa}) for the map $p$, where we recall the $t_\bullet$ are the coordinate functions on the simplicial cube. 

    When $n=1$ above, the above discussion applies to more than fields: for any scheme $X$, the map $H^1(X,\Z(1)) \to \Gamma_X(\mathcal{O}_X^\times)$
    given by sending $[Z]\mapsto p_*(t_1)$ (and zero if $Z$ is not dominant over $X$) is an isomorphism identifying the degree-$1$ weight-$1$ motivic cohomology with the global units. This allows us to more generally consider cup products like 
    \[
        u_1 \smile \ldots \smile u_n \in H^n(X, \Z(n))
    \]
    for units $u_1,\ldots, u_n$ on $X$. Following notational conventions in Milnor $K$-theory, we will also denote such cup products, as well as the associated tensors in Milnor $K$-theory, by the curly braces $\{u_1,\ldots, u_n\}$.

    \subsubsection{Trace operators on cohomology of $\mathbb{G}_m$}

    We observe the element $1-z\in H^1(\mathbb{G}_m-\{1\},\mathbb{Z}(1))$ satisfies 
    \[
    [a]_*(1-z) = 1-z
    \]
    for any $a\in \mathbb{N}$, where $[a]_*$ denotes the restriction of the map $[a]:\mathbb{G}_m\to \mathbb{G}_m$ is the map induced by $z\mapsto z^a$, composed with pullback by the inclusion $\mathbb{G}_m-\{1\} \hookrightarrow \mathbb{G}_m$. In general, we use the same notation $[a]:\mathbb{G}_m^n\to \mathbb{G}_m^n$ for the same map coordinatewise $(z_1,\ldots, z_n)\mapsto (z_1^a,\ldots, z_n^a)$, and observe that also 
    \[
    [a]_*\{1-z_1,\ldots, 1-z_n\} = \{1-z_1,\ldots, 1-z_n\}.
    \]
    For the submodule of the cohomology $\mathbb{G}_m^n$, or various subspaces thereof, we will write superscript $(0)$ to mean the elements which are fixed in this way by all $[a]_*$; e.g.
    \[
    \{1-z_1,\ldots, 1-z_n\} \in H^n((\mathbb{G}_m-\{1\})^n,\mathbb{Z}(n))^{(0)}.
    \]
    Note that $H^n((\mathbb{G}_m-\{1\})^n,\mathbb{Z}(n))$ carries a pushforward action of $M_n(\mathbb{Z})$ commuting with all $[a]_*$; since the $[a]_*$ act invertibly, the $M_n(\mathbb{Z})$ actually induces a $\mathrm{GL}_n(\mathbb{Q})$-action on the trace-fixed submodule $H^n((\mathbb{G}_m-\{1\})^n,\mathbb{Z}(n))^{(0)}$. This applies to the trace-fixed cohomology of any subspace of $\mathbb{G}_m$ with a $M_n(\mathbb{Z})$-action.

    \subsection{Symbols from spherical chain complexes} \label{section:symbols}
    We turn now to constructing a chain complex $\widetilde{\Chains}(n)$ with action by $\mathrm{GL}_n(\mathbb{Q})$, and show that it computes the reduced homology of the sphere $S^{n-1}$ and is thus almost exact. In top degree, this complex will naturally parameterize special elements in the Milnor $K$-theory of the function field of $\mathbb{G}_m^n$, and we will abbreviate the top-degree module
    \[
    \tilde{C}(n) := \widetilde{\Chains}(n)_n
    \]
    The construction of this section directly generalizes \cite[\S5]{SV}, who consider the case $n=2$, though we use slightly different conventions for the group action: we are taking \emph{left} actions on spaces with a resulting pushforward left action on cohomology groups, while they consider right actions on spaces.
    
    We identify $S^{n-1} = (\R^{n}-\{0\})/\R_{>0}^\times$ with the set of \emph{rays} in an $n$-dimensional real vector space, and write $S^{n-1}(\Q)$ for the subset of rational rays. Consider the poset $\mathcal{P}$ of linear triangulations $T$ of $S^{n-1}$ with vertices contained in $S^{n-1}(\mathbb{Q})\subset S^{n-1}$, ordered by \emph{refinement}: that is to say, we say $T_1\ge T$ if every simplex of $T$ is a union of simplices of $T_1$. Here, by \emph{linear triangulation}, we mean every $k$-simplex is the intersection of a $S^{n-1}$ with a dimension-$(k+1)$ simplicial polyhedral cone in $\mathbb{R}^n$ emanating from the origin. Notice that in particular, the great circle containing any given simplex can be recovered as the intersection of $S^{n-1}$ with the span of the rays corresponding to the vertices of the simplex. As such, each of these corresponding subspaces are themselves rationally defined, since these rays are rational.
    
    Given any geodesic $k$-simplex $\Delta$ with endpoints in $S^{n-1}(\mathbb{Q})$ with $k\le n-1$, there exists a triangulation $T\in \mathcal{P}$ such that $\Delta$ is one of the triangles in $T$: indeed, for each face of $\Delta$, take the corresponding great circle on $S^{n-1}$; these divide the sphere into polyhedral regions, which then can be barycentrically subdivided into simplices by picking a rational point in each interior. We have the following ``upper bound'' theorem on refinements of trinagulations.

    \begin{prop} \label{prop:refine}
        Given any two triangulations $T_1,T_2\in \mathcal{P}$, there exists a common upper bound for them, i.e., a triangulation $T\in \mathcal{P}$ refining both $T_1$ and $T_2$.
    \end{prop}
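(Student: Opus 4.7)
The plan is to form the overlay of $T_1$ and $T_2$ as a rational polytopal decomposition of $S^{n-1}$ and then triangulate this overlay with rational vertices, using the density of rational points on rational great subspheres.

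First, I would consider the arrangement $\mathcal{H}$ of great $(n-2)$-spheres on $S^{n-1}$ obtained by extending the codimension-$1$ faces of every simplex of $T_1$ and of $T_2$; each such great subsphere is the intersection of $S^{n-1}$ with a rationally-defined hyperplane through the origin, so $\mathcal{H}$ is a finite arrangement of rational great subspheres. Its complement partitions $S^{n-1}$ into finitely many open spherical-convex cells, whose closures form a rational polytopal cell complex $P$. By construction, each closed cell of $P$ equals $\sigma_1\cap\sigma_2$ for some $\sigma_1\in T_1$ and $\sigma_2\in T_2$, so any simplicial refinement of $P$ whose vertices lie in $S^{n-1}(\mathbb{Q})$ will automatically refine both $T_1$ and $T_2$.

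Next I would triangulate $P$ inductively on skeleton dimension. The key technical ingredient is that for any rational linear subspace $V\subseteq\mathbb{R}^n$ that is spanned by rational points of $S^{n-1}$, the rational locus $(V\cap S^{n-1})(\mathbb{Q})$ is dense in the sphere $V\cap S^{n-1}$: given any rational $p_0\in V\cap S^{n-1}$, stereographic projection from $p_0$ furnishes a $\mathbb{Q}$-birational equivalence between $V\cap S^{n-1}$ and an affine space over $\mathbb{Q}$, from which density is immediate. Using this, for each positive-dimensional cell $C$ of $P$ I pick a rational point $v_C$ in the relative interior of $C$ and cone the inductively-constructed triangulation of $\partial C$ to $v_C$; since the construction depends only on $C$ and its boundary, the triangulations on neighboring cells agree along their common faces, assembling to a global geodesic triangulation $T\in\mathcal{P}$.

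The main obstacle will be that the $0$-cells of $P$, i.e., the vertices of the arrangement $\mathcal{H}$, are generally \emph{irrational}: already for $n=3$ one has rational $2$-planes whose common line meets $S^2$ at an irrational pair of antipodal points (for example the planes $4y-3z=0$ and $4x-3z=0$, whose intersection line meets $S^2$ at $\pm(3,3,4)/\sqrt{34}$). The inductive coning step must therefore be set up so that only rational points appear as vertices, circumventing the (possibly irrational) $0$-skeleton of $P$ entirely; this is what forces the inductive procedure to work cell-by-cell from the interior outward, rather than attempting to triangulate $P$ as a classical simplicial complex on its own vertex set. Once $T$ has been so constructed, the fact that it refines both $T_1$ and $T_2$ is immediate from the construction of $P$.
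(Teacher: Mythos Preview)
Your overall strategy---overlay the two triangulations and then subdivide the resulting polytopal cells---is exactly the paper's. The divergence is in your ``main obstacle,'' which rests on a misreading of the notation $S^{n-1}(\mathbb{Q})$. In the paper's setup this does \emph{not} mean rational solutions of $\sum x_i^2=1$; it means rational \emph{rays}, i.e.\ points of $S^{n-1}$ lying on lines through nonzero vectors of $\mathbb{Q}^n$ (the paper says this explicitly: ``$S^{n-1}(\mathbb{Q})$ viewed as parameterizing rays in $\mathbb{R}^n$,'' and indeed $\mathrm{GL}_n(\mathbb{Q})$ would not act otherwise). Under that reading the $0$-cells of the overlay are automatically in $S^{n-1}(\mathbb{Q})$, since an intersection of rationally defined linear subspaces is again rationally defined; your own example illustrates this, as $(3,3,4)/\sqrt{34}$ lies on the rational ray through $(3,3,4)$. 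The paper's proof simply records this fact and then cones each top-dimensional cell from a rational interior point.

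It is also worth noting that your proposed workaround would not actually succeed if the obstacle were real: the base of your induction is the $1$-cells of $P$, whose boundaries are precisely the allegedly irrational $0$-cells, and coning from a rational interior point of a $1$-cell just produces two sub-edges each still carrying an irrational endpoint. So the phrase ``work cell-by-cell from the interior outward'' does not, as written, circumvent anything. Fortunately, once $S^{n-1}(\mathbb{Q})$ is read correctly the issue disappears, density of rational rays is trivial (no stereographic projection needed), and your argument collapses to the paper's.
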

    \begin{proof}
        Superimposing the triangulations $T_1$ and $T_2$, we obtain a decomposition of $S^{n-1}$ into convex polyhedral cells. Each vertex of this superimposition is still in $S^{n-1}(\mathbb{Q})$, because any new vertices are the intersection points of rationally defined subspaces in $\mathbb{R}^n$ (corresponding to the great circles forming the intersecting faces) and hence themselves correspond to rational rays. Pick a rational point in interior of each polyhedral cell and subdivide it into simplices by joining the point to each $(n-2)$-face of the cell with a $(n-1)$-simplex; we obtain a new triangulation which still has all vertices in $S^{n-1}(\mathbb{Q})$. 
    \end{proof}

    Write $C_T$ for the augmented cellular chain complex of the simplicial complex $T$ with
    \[
    C_{T,i} = \mathbb{Z}\{T_i\}
    \]
    for all $i\ge 0$, freely generated by the $i$-simplices endowed with their orientation coming from the globally oriented $S^{n-1}$. (We will consider the oppositely-oriented version $\overline{\Delta}$ of a simplex $\Delta$ to live in the chain complex, representing its additive inverse $[\overline{\Delta}] = -[\Delta].$) The boundaries $C_{T,i+1}\xrightarrow{\partial} C_{T,i}$ given by the alternating face maps when $i\ge 0$. By ``augmented'', we mean that we set $C_{T,-1}=\mathbb{Z}$, with its incoming boundary map given by the degree map on $0$-simplices. Then for any $T$,
    \[
    H_{n-1}(C_{T,\bullet}) \cong \mathbb{Z}
    \]
    is the only nontrivial homology group, since each simplicial complex is homotopy equivalent to $S^{n-1}$.

    For a refinement $T'$ of $T$, there is a pullback complex map
    \[
    C_T \to C_{T'}
    \]
    which is a quasi-isomorphism; these maps are functorial for compositions in the poset of triangulations. We can then define
    \begin{equation} \label{eq:ind}
    \widetilde{\Chains}(n)_i := \varinjlim C_{T,i}[1]
    \end{equation}
    as the colimit over all triangulations; by exactness of direct limits, $\widetilde{\Chains}(n)$ also only has homology $\mathbb{Z}$ in degree $n$; recall also that we abbreviate the top-degree term as $\tilde{C}(n)$.
    
    Via the left action of $\mathrm{GL}_n(\mathbb{Q})$ on $S^{n-1}(\mathbb{Q})$ viewed as parameterizing rays in $\mathbb{R}^n$, the complex $\widetilde{\Chains}(n)$ takes a left action of $\mathrm{GL}_n(\mathbb{Q})$. In particular, if $\gamma$ fixes a simplex $\Delta$ but $\det \gamma = -1$, then
    \[
    \gamma \cdot [\Delta] =[\overline{\Delta}]= -[\Delta].
    \]

    We now give generators and relations for $\widetilde{\Chains}(n)$. Given any tuple of independent rays $(m_1,m_2,\ldots, m_k)$ in $S^{n-1}(\mathbb{Q})$ lying in the same (open) hemisphere, which we will call an \emph{acyclic} tuple, they span a unique oriented geodesic simplex (with orientation corresponding to the order), whose $1$-frame is formed from the shorter segment of the great $1$-circle through each pair of points. (By assumption, there are no antipodes, so there is no ambiguity.)
    
    For any triangulation $T$ containing this simplex, we write
    \[
    [\Delta(m_1,m_2,\ldots, m_k)_T]\in C_{T,k-1}
    \]
    for the class in homology; under any triangulation $T'$ which refines $T$ with the same property, the class $[\Delta(m_1,m_2,\ldots, m_k)_T]$ maps to $[\Delta(m_1,m_2,\ldots, m_k)_{T'}]$. Thus, we can speak unambiguously of a class in the direct limit
    \[
    [\Delta(m_1,m_2,\ldots, m_k)]\in \widetilde{\Chains}(n)_k.
    \]
    These classes generate $\widetilde{\Chains}(n)_i$ for each $i$, since any rational simplex can be subdivided into rational acyclic simplices.

    We claim that the relations between the generators corresponding to these simplices are generated by subdivision of an acyclic simplex into subsimplices. Indeed, suppose we have an relation of $k$-simplex generators
    \begin{equation} \label{eq:relation}
    [\Delta_1]+\ldots + [\Delta_t]=0.
    \end{equation}
    Superimposing all the great circles corresponding to the faces of the $\Delta_i$ and subdividing the resulting polyhedral cells as in the proof of \ref{prop:refine}, we can find a set of geodesic simplices $\Delta_1',\ldots,\Delta_s'$, disjoint outside of their faces, such that each $\Delta_i$ is a union
    \[
    \Delta_i = \bigsqcup_{j\in S_i}\Delta_{j}'.
    \]
    Associated to this decomposition of simplices we have pushforward relation
    \begin{equation} \label{eq:decomprel}
    [\Delta_i] = \sum_{j\in S_i} \Delta_j'.
    \end{equation}
    Substituting these into \eqref{eq:relation}, the resulting relation is between disjoint simplices, on which the coefficient of each simplex therefore must be zero. Hence relations of the form \eqref{eq:decomprel}, coming from subdivision of an acyclic simplex, generate all relations as we claimed.

    In fact, we can go even further, and generate all acyclic simplicial subdivision relations in terms of some easily classified ones:
    \begin{prop} \label{prop:simplesub}
        Write $\mathbf{T}_k$ for the set of all acyclic positively oriented geodesic $k$-simplices with rational faces.\footnote{As before, we will also freely use negatively oriented simplices as generators, with the convention they are simply $-1$ times the positively oriented simplex.} The module $\widetilde{\Chains}(n)_i$ for $i>0$ is given by 
        \[
        \mathbb{Z}\{\mathbf{T}_{i-1}\}/\cS_{i-1}
        \]
        where $\cS_k$ is the module of acyclic $k$-simplicial subdivision relations, and is generated by \emph{stellar subdivisions} of rank $r$ for $2\le r\le k$ given as follows: fix an acyclic simplex $\Delta\in \Delta(m_1,\ldots, m_k)$ and a rational point $m$ lying in the great circle corresponding to the face with vertices $(m_1,\ldots, m_{r})$, but sharing some hemisphere of the great circle with all of them. Corresponding to this data we have the \emph{stellar} subdivision relation
        \begin{equation}\label{eq:simpgen}
        [\Delta(m_1,\ldots, m_k)]=\sum_{i=1}^r [\Delta(m_1,\ldots, \hat{m_i},m,m_{i+1},\ldots, m_k)].
        \end{equation}
    \end{prop}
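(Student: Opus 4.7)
The plan is to reduce the module $\mathcal{S}_{k-1}$ to subdivision relations of a single acyclic simplex into acyclic subsimplices --- which the preceding discussion has already accomplished --- and then to show that each such single-simplex subdivision relation lies in the submodule $\mathcal{S}^{\text{st}}_{k-1}$ generated by the stellar subdivision relations \eqref{eq:simpgen}.

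The key geometric input is the following PL-topological fact, essentially Alexander's stellar theorem: any subdivision $T$ of a simplex $\Delta$ can be related to the trivial subdivision $\{\Delta\}$ by a finite sequence of elementary stellar subdivisions and their formal inverses, performed on simplices of intermediate triangulations. Each elementary stellar move of rank $r$ applied to a top-dimensional simplex of an intermediate triangulation is literally an instance of the relation \eqref{eq:simpgen}, and in our algebraic setting we are free to apply such a relation with either sign. Telescoping this sequence then expresses the subdivision relation $[\Delta] - \sum_j [\Delta_j']$ as a $\mathbb{Z}$-linear combination of elements of $\mathcal{S}^{\text{st}}_{k-1}$.

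To implement this, I would choose a generic linear ordering on the interior vertices of the subdivision (those not equal to any $m_i$) and introduce them into $\Delta$ one at a time via star subdivisions: the $i$-th vertex $v_i$ lies in the relative interior of a unique minimal face $\tau_i$ of the current intermediate triangulation, and adding $v_i$ amounts to applying \eqref{eq:simpgen} (with $r = \dim \tau_i + 1$) separately to each top-dimensional simplex containing $\tau_i$. Acyclicity is preserved throughout, since all new vertices --- being rational intersection points of great subcircles through acyclic sets of rays --- remain in the open hemisphere spanned by $m_1,\ldots,m_k$, guaranteeing that every generator produced in the process is a legitimate element of $\mathbf{T}_{k-1}$.

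The main obstacle I anticipate is the final bridging step: the triangulation arising from this additive ``add one interior vertex at a time'' procedure may not literally coincide with the given $\{\Delta_j'\}$, since stellar subdivisions alone are insufficient to realize arbitrary changes of triangulation on a fixed vertex set. This is precisely why one needs Alexander's theorem in the stronger form allowing inverse stellar moves; equivalently, one can pass between any two triangulations with the same vertex set via bistellar flips, each of which can be written as the difference of two stellar subdivision relations and therefore lies in $\mathcal{S}^{\text{st}}_{k-1}$. Verifying this reconciliation cleanly --- and in the rational, geodesic context of our $S^{n-1}(\mathbb{Q})$ setup, where one must check that all auxiliary vertices introduced by intermediate moves can be chosen in $S^{n-1}(\mathbb{Q})$ --- is the most delicate step in the argument.
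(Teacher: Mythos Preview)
Your proposal is correct and follows essentially the same route as the paper: both reduce to the classical PL-topology fact that any two subdivisions of a simplex are connected by a finite sequence of stellar moves (the paper cites Newman and Lickorish, you call it Alexander's stellar theorem). The paper packages the argument slightly more cleanly by asserting the existence of a common refinement of $\{\Delta\}$ and $\{\Delta_j'\}$ that is \emph{sequentially stellar} from both sides, thereby sidestepping your explicit vertex-insertion procedure and the subsequent bistellar-flip reconciliation; but this is a cosmetic difference, and your concern about keeping auxiliary vertices rational is easily handled since stellar moves let you choose the new point freely in the relevant face.
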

    \begin{proof}
        From the discussion preceding the proposition, it remains only to prove that the stellar subdvision relations \eqref{eq:simpgen} generate all acyclic subdivision relations.

        First, we say that a subdivision of a simplex $\Delta$ is a \emph{sequentially stellar subdivision} if it can be obtained by a sequence of stellar subdivisions of $\Delta$ (and the resulting subsimplices at each sequential step). The relation corresponding to a sequentially stellar subdivision of an acyclic simplex is certainly generated by those of the implicated stellar subdivisions by repeated substitution.

        We observe, therefore, that it suffices to show that any acyclic subdivision         
        \[
        \Delta = \Delta_1\sqcup \ldots \sqcup \Delta_t
        \]
        can be refined to a sequentially stellar subdivision of $\Delta$, which is also a sequentially stellar subdivision of each $\Delta_i$ when restricted to it; this result in the field of combinatorial topology is due to \cite{New}.
    \end{proof}
    
    We also write
    \[
    \Chains(n) := \widetilde{\Chains}(n)/H_n(\widetilde{\Chains}(n))
    \]
    for the exact quotient of $\widetilde{\Chains}(n)$ given by quotienting by top homology. We use the analogous abbreviation
    \[
    C(n):= \Chains(n)_n
    \]
    for the top-degree module.
    
    \subsection{Combinatorial cohomology classes for $\mathrm{GL}_n$} \label{section:comb}

    We now construct ``abstract'' cohomology classes for $\mathrm{GL}_n(\mathbb{Q})$ valued in symbols in the symbol complex, whose realization in Milnor $K$-theory (and later, differential forms) will yield our arithmetic classes of interest. We use the following standard construction in group cohomology, whose proof we sketch:

    \begin{lem} \label{lem:groupcohom}
        If a group $G$ acts on an exact complex $C_\bullet$ supported in degrees $[0,n]$, then we have a natural map on cohomology
        \[
        C_0^G \to H^{n-1} (G, C_n)
        \]
        inhomogeneous cocycle representatives of which can be constructed as follows: associated to $e\in C_0^G$, pick a lift $\ell_1$ of $e$ to $C_1$, and consider the $1$-cochain
        \[
        \gamma\mapsto (\gamma-1)\ell_1 \in C^1(G, C_1).
        \]
        By exactness, this is the boundary of an element $\ell_2\in C^1(G,C_2)$; we take the chain coboundary $\partial \ell_2 \in C^2(G,C_2)$ which again lifts to $\ell_3\in C^2(G,C_3)$, etc. The lift $\ell_n\in C^{n-1}(G,C_n)$ is a cocycle representing the image of $e$ in $H^{n-1} (G, C_n)$.
    \end{lem}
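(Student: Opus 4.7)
The plan is to recognize the described construction as the iterated application of connecting homomorphisms in group cohomology, one for each short exact sequence obtained by splitting the exact complex $C_\bullet$.

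For $0 \le k \le n-1$, set $K_k := \ker(\partial\colon C_k \to C_{k-1}) = \mathrm{image}(\partial\colon C_{k+1} \to C_k)$, with the convention $C_{-1} = 0$. Exactness of $C_\bullet$ then gives short exact sequences of $G$-modules
\[
0 \to K_{k+1} \to C_{k+1} \xrightarrow{\partial} K_k \to 0 \qquad (0 \le k \le n-2),
\]
along with the identifications $K_0 = C_0$ and $K_{n-1} \cong C_n$ (the latter via the injectivity of $\partial\colon C_n \to C_{n-1}$ given by exactness at $C_n$). The associated connecting homomorphisms $\delta_k\colon H^i(G, K_k) \to H^{i+1}(G, K_{k+1})$ compose to give
\[
C_0^G = H^0(G, K_0) \xrightarrow{\delta_0} H^1(G, K_1) \to \cdots \to H^{n-1}(G, K_{n-1}) = H^{n-1}(G, C_n),
\]
and I claim this is the asserted map.

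To recover the explicit cocycle formulas, I unwind the standard recipe for each $\delta_k$ at the level of inhomogeneous cochains: if a class in $H^i(G, K_k)$ is represented by $f\colon G^i \to K_k$, lift $f$ pointwise to a cochain $\tilde f\colon G^i \to C_{k+1}$ (possible by surjectivity of $\partial\colon C_{k+1} \to K_k$); writing $d$ for the group-cohomological coboundary, $d\tilde f \colon G^{i+1} \to C_{k+1}$ automatically lands in $K_{k+1}$ since $\partial(d\tilde f) = d(\partial \tilde f) = df = 0$, and it represents $\delta_k([f])$. Feeding in the $0$-cochain $e$, the first lift to $C_1$ is exactly an $\ell_1$ with $\partial\ell_1 = e$; its group coboundary is $\gamma \mapsto (\gamma - 1)\ell_1$; iterating the recipe produces $\ell_2, \ldots, \ell_n$ as in the statement.

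The only step warranting verification is that $\ell_n \in C^{n-1}(G, C_n)$ is actually a group cocycle rather than a generic cochain. This is automatic: $\partial(d\ell_n) = d(\partial \ell_n) = d(d\ell_{n-1}) = 0$ by $d^2 = 0$ in group cohomology, and exactness of $C_\bullet$ at $C_n$ makes $\partial\colon C_n \to C_{n-1}$ injective, forcing $d\ell_n = 0$. Independence of the resulting class in $H^{n-1}(G, C_n)$ from the chosen lifts is the usual well-definedness of connecting homomorphisms and should pose no real obstacle; the content of the lemma is essentially notational, packaging the iterated connecting map into a form convenient for the cocycle constructions of later sections.
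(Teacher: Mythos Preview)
Your proposal is correct and follows essentially the same approach as the paper: both construct the map as the composition of connecting homomorphisms associated to the short exact sequences $0\to \ker\partial_{k+1}\to C_{k+1}\to \operatorname{im}\partial_{k+1}\to 0$, then unwind the standard snake-lemma recipe to recover the iterated-lifting description. Your version is slightly more detailed in verifying that $\ell_n$ is a cocycle and in noting well-definedness, but the underlying argument is the same.
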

    \begin{proof}
        The map $C_0^G \to H^{n-1} (G, C_n)$ is the composition of connecting homomorphisms
        \begin{align}
        C_0 & \to H^1(G, \ker \partial_1) = H^1(G, \mathrm{im } \partial_2)\\
            & \to H^2(G, \ker \partial_2) = H^2(G, \mathrm{im } \partial_3) \\
            & \to \ldots \\
            & \to H^{n-1}(G, \ker \partial_n)=H^{n-1}(G, C_n).
        \end{align}
        for the $G$-short exact sequences
        \[
        \ker \partial_i \xrightarrow{\partial_{i+1}} C_i \xrightarrow{\partial_i} \mathrm{im }\partial_i.
        \]
        Unwinding the definition of each of these maps yields precisely the process of iterated lifting described in the lemma.
    \end{proof}

    We apply the lemma to the action of $\mathrm{GL}_n(\mathbb{Q})$ on $\Chains(n)$, using the element $e\in \mathbb{Z}$ in degree zero of each respective complex. This affords us a cohomology class
    \begin{align}
    \Theta^{S^{n-1}}(n)\in H^{n-1}(\mathrm{GL}_n(\mathbb{Q}), C(n)).
    \end{align}
    
    \subsubsection{Explicit cocycle representatives}

    We can apply Lemma \ref{lem:groupcohom} to obtain cocycle representatives for $\Theta^{S^{n-1}}(n)$.
    
    To write down our explicit cocycle, it will be useful to specify an extension of the previous notation $[\Delta(m_1,\ldots, m_k)]$ to \emph{any} tuple of rays $(m_1,\ldots, m_k)$, even tuples failing to be acyclic or independent. Thus, we define a $\Delta$-\emph{extension} $E$ to be a collection of classes\footnote{This is somewhat abusive notation, since we are not necessarily saying that $[\Delta_E(m_1,\ldots, m_k)]$ is actually the class of a particular geodesic simplex when the tuple is not linearly independent.}
    \[
        [\Delta_E(m_1,\ldots, m_k)] \in \Chains(n)_k
    \]
    for arbitrary tuples of rays $(m_1,\ldots, m_k)$ satisfying the following properties:
    \begin{enumerate}
        \item If the tuple $(m_1,\ldots, m_k)$ is independent, then 
        \[
        [\Delta_E(m_1,\ldots, m_k)]=[\Delta(m_1,\ldots, m_k)]
        \]
        as we have previously defined it.
        \item $\gamma [\Delta_E(m_1,\ldots, m_k)]= [\Delta_E(\gamma m_1,\ldots, \gamma m_k)]$ for all $\gamma\in \mathrm{GL}_n(\mathbb{Q})$.
        \item The image of $[\Delta_E(m_1,\ldots, m_k)]$ under the face map is 
        \[
        \sum_i (-1)^{i-1} [\Delta_E(m_1,\ldots,\hat{m_i},\ldots m_k)].
        \]
        \item If $(m_1,\ldots m_k)$ is a dependent acyclic tuple, then $[\Delta_E(m_1,\ldots, m_k)]$ is zero.
    \end{enumerate}
    We will call the data of such an extension a \emph{$\Delta$-extension}; all $\Delta$-extensions by definition agree on linearly independent tuples.
    \begin{prop} \label{prop:degen}
        $\Delta$-extensions exist.    
    \end{prop}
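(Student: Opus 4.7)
The plan is to construct $\Delta_E$ inductively in the tuple length $k$. For $k=1$, every singleton is linearly independent and axiom (1) fixes the class as the corresponding vertex $0$-simplex, with (2)--(4) vacuous or trivial. Assume that $\Delta_E$ has been defined on all tuples of length $<k$ satisfying (1)--(4), so that the partial datum $\phi_{<k}$ built so far commutes with $\partial$ wherever both sides are defined. For a length-$k$ tuple $(m_1,\ldots,m_k)$, I note that any linearly independent tuple is automatically acyclic (its positive cone is strictly convex and so contained in an open halfspace), and split into three exclusive cases: linearly independent, dependent acyclic, and dependent cyclic.

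In the independent case, axiom (1) fixes the class as $[\Delta(m_1,\ldots,m_k)]$, and axiom (3) holds tautologically from the face structure of the geodesic simplex. In the dependent acyclic case, axiom (4) forces the value to be $0$, so I must verify the consistency
\[
\sum_{i=1}^k (-1)^{i-1}[\Delta_E(m_1,\ldots,\hat{m_i},\ldots,m_k)] = 0 \quad \text{in } \text{Chains}(n)_{k-1}.
\]
Since sub-tuples of an acyclic tuple remain acyclic, the nonzero terms in this sum correspond exactly to indices $i$ in the support of a linear dependency among the $m_j$, and the resulting sub-simplices all lie in the strictly lower-dimensional great sphere $W := \mathrm{span}_{\mathbb{R}}(m_1,\ldots,m_k)\cap S^{n-1}$. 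I would argue that the alternating sum unpacks as a telescoping combination of the stellar subdivision relations of Proposition \ref{prop:simplesub} applied to the polyhedral cell structure cut out by the $m_i$ inside $W$, and hence vanishes.

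In the dependent cyclic case the axioms do not force a value, and we must choose. By induction,
\[
c := \sum_{i=1}^k (-1)^{i-1}[\Delta_E(m_1,\ldots,\hat{m_i},\ldots,m_k)]
\]
is a well-defined element of $\text{Chains}(n)_{k-1}$; moreover $\partial c = \phi_{<k}(\partial^2(m_1,\ldots,m_k)) = 0$ because $\phi_{<k}$ is already a chain map, so $c$ is a cycle. Since $\text{Chains}(n)$ is exact by construction---the top homology of $\widetilde{\text{Chains}}(n)$ is exactly what was quotiented out---the cycle $c$ is a boundary, so there exists a lift $\ell$ with $\partial \ell = c$, which I declare to be $[\Delta_E(m_1,\ldots,m_k)]$. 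To enforce axiom (2), I well-order the $\mathrm{GL}_n(\mathbb{Q})$-orbits of dependent cyclic tuples of each length, pick a representative and a stabilizer-invariant lift for each orbit, and extend to the full orbit by the group action.

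The main obstacle is the existence of a stabilizer-invariant lift in the equivariance step: the stabilizer of a chosen representative acts on the torsor $\partial^{-1}(c) \subset \text{Chains}(n)_k$, and we need a fixed point. The cleanest resolution is to produce a canonical geometric lift---for example, by coning $c$ onto a rational base point in a hemisphere selected from data intrinsic to the tuple---so that stabilizer invariance is built in rather than imposed after the fact. A secondary, more combinatorial hurdle is the subdivision identity in the dependent acyclic case: matching the alternating sum of sub-simplex classes to the generating relations of Proposition \ref{prop:simplesub} requires careful orientation bookkeeping, especially when the dimension drop exceeds one and several sub-simplices are themselves dependent acyclic and thus contribute zero.
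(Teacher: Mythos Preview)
Your plan is essentially the same as the paper's: induct, use exactness of $\text{Chains}(n)$ to lift, and identify the acyclic dependent consistency relation with stellar subdivision. The paper, however, organizes the induction by \emph{corank} $k-r$ rather than by length $k$, and this dissolves your ``secondary hurdle'' at once. In corank $1$ the alternating sum is (after dropping the terms with $i$ outside the unique dependency support, which are dependent acyclic and hence zero by induction on length) \emph{literally a single stellar subdivision relation}: one of the $m_j$ in the support plays the role of the subdividing point $m$ on the face spanned by the remaining support indices. In corank $\geq 2$ every sub-tuple $(m_1,\ldots,\hat m_i,\ldots,m_k)$ is still dependent, hence acyclic dependent, hence zero by induction; so the alternating sum is identically zero and there is nothing to check. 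Your statement that ``nonzero terms correspond exactly to indices in the support of a linear dependency'' is only accurate in corank $1$; in higher corank every index lies in some dependency support, yet all terms vanish. As for your ``main obstacle'': the paper simply picks one representative per $\text{GL}_n(\mathbb{Q})$-orbit of tuples and extends by the group action, without commenting on stabilizers, so you are being more scrupulous than the paper here; your coning idea is a reasonable way to make the lift canonical, but the paper does not pursue it.
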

    \begin{proof}
        We show how to construct such an $E$ inductively on the corank of $(m_1,\ldots, m_k)$: that is, on the difference $k-r$ between the number of rays and the rank of their span. When $k=r$, the definition of $[\Delta_E(m_1,\ldots, m_k)]$ is forced on us by (1); these definitions certainly satisfy (2) and (3), and (4) is not applicable. This furnishes the base case. 

        We will now first complete the inductive step so as to fulfill (2), (3), then return to analyze how one fulfills (4) in different inductive steps, as fulfilling (4) cannot be analyzed uniformly across all steps.

        Indeed, to satisfy (2), all we need to do is construct $\Delta_E(m_1,\ldots, m_k)$ for an arbitrary representative of each $\mathrm{GL}_n(\mathbb{Q})$-orbit of tuples and extend by group translation, since the conditions (3) and (4) are certainly translation-invariant. Thus, assume that we have constructed $\Delta_E$ for corank up to $i-1$, and we wish to construct it for tuples of corank $i$. Then the point is that 
        \begin{equation} \label{eq:alt}
        \sum_i (-1)^{i-1} [\Delta_E(m_1,\ldots,\hat{m_i},\ldots m_k)]
        \end{equation}
        has boundary zero, so by exactness of $\Chains(n)$ it is possible to pick some chain lifting it under the boundary map.
        
        We claim that $(m_1,\ldots, m_k)$ is acyclic, we can pick the extension so that \eqref{eq:alt} is identically zero. Indeed, in the first inductive step $i=1$, \eqref{eq:alt} is precisely a stellar subdivision relation, so it vanishes and we can pick the lift to be zero. When $i>1$, then inductively all the terms of \eqref{eq:alt} are acyclic dependent tuples, which by the inductive hypothesis are zero, and hence we can pick zero as a lift under the boundary map.
    \end{proof}

    To any $\Delta$-extension, there corresponds an explicit cocycle:

    \begin{thm} \label{thm:combrep}
        For any $(n-1)$-tuple of matrices $\underline{\gamma}=(\gamma_1,\ldots, \gamma_{n-1})$, write $c_i(\underline{\gamma})$ for 
        \[\gamma_1\gamma_{2}\ldots\gamma_i e_1\]
        for $i\ge 0$, where $e_1=(1,0,\ldots, 0)$; this is equivalently the first column of the product matrix written above. Now fix any $\Delta$-extension $E$; for such an extension, we define a $(n-1)$-cochain $\theta_E^{S^{n-1}}(n)$ by
        \begin{equation} \label{eq:thetacom}
         \underline{\gamma}\to [\Delta_E[\underline{\gamma}]]:=  [\Delta_E(c_{n-1}(\underline{\gamma}),c_{n-2}(\underline{\gamma}),\ldots, c_0(\underline{\gamma}))].
        \end{equation}
        (In particular, this is the zero class if the simplex in question is degenerate.) Then $\theta^{S^{n-1}}_E(n)$ is a cocycle representative for $\Theta^{S^{n-1}}(n)$.
    \end{thm}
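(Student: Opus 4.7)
The plan is to identify $\theta_{com}(n)^E$ with the output of the iterated lifting procedure of Lemma \ref{lem:groupcohom}, applied to the exact complex $\text{Chains}(n)$ (supported in degrees $[0,n]$) with input element $e = 1 \in \mathbb{Z} = \text{Chains}(n)_0$. The base lift is $\ell_1 := [\Delta_E(c_0)] = [e_1]$, a vertex mapping to $1$ under the augmentation. The goal is then to show inductively that one may choose the successive lifts to be
\[
\ell_{k+1}(\gamma_1,\ldots,\gamma_k) \;=\; \epsilon_{k+1}\, [\Delta_E(c_k(\underline{\gamma}),\, c_{k-1}(\underline{\gamma}),\, \ldots,\, c_0(\underline{\gamma}))]
\]
for appropriate signs $\epsilon_k \in \{\pm 1\}$; terminating the induction at $k = n-1$ will then give $\ell_n = \pm\, \theta_{com}(n)^E$, which by the lemma is a cocycle representing $\Theta_{com}(n)$.

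The crux of the induction is to verify that $\partial \ell_{k+1} = d \ell_k$, which reduces to matching the face-boundary formula of property (3) of a $\Delta$-extension against the inhomogeneous cochain differential. Expanding via property (3) gives
\[
\partial [\Delta_E(c_k,\ldots,c_0)] \;=\; \sum_{j=0}^{k} (-1)^{k-j}\, [\Delta_E(c_k,\ldots,\widehat{c_j},\ldots,c_0)],
\]
and each face must be identified with one of the $k+1$ terms of $d\ell_k$. Removing the last vertex $c_k$ recovers $\ell_k(\gamma_1,\ldots,\gamma_{k-1})$, the trailing term of $d\ell_k$; removing the first vertex $c_0 = e_1$ yields, after invoking the $\text{GL}_n(\mathbb{Q})$-equivariance (property (2)), the leading term $\gamma_1\, \ell_k(\gamma_2,\ldots,\gamma_k)$; and removing an intermediate $c_j$ corresponds to merging $\gamma_j$ and $\gamma_{j+1}$ into a single factor of the next cochain argument, collapsing the two adjacent vertices in the list of $c_i$'s into one, and thus producing the middle term $\ell_k(\gamma_1,\ldots,\gamma_j\gamma_{j+1},\ldots,\gamma_k)$.

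The main obstacle I anticipate is bookkeeping: tracking signs (the $(-1)^{i-1}$ of property (3) against the $(-1)^i$ of the cochain differential, which fixes the recursion for $\epsilon_{k+1}$ in terms of $\epsilon_k$) and reconciling the order of matrix products in $c_i = \gamma_i\gamma_{i-1}\cdots\gamma_1 e_1$ with the direction of the action on cochains. These steps are delicate but mechanical once the conventions are pinned down; any residual overall sign in $\ell_n$ can be absorbed by negating the initial lift $\ell_1$.

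As a reassuring sanity check on the cocycle condition itself, one can bypass the inductive lifting for the very last step by applying property (3) to the formal simplex $[\Delta_E(c_n(\underline{\gamma}), c_{n-1}(\underline{\gamma}), \ldots, c_0(\underline{\gamma}))]$, which vanishes automatically as an element of $\text{Chains}(n)_{n+1} = 0$. The resulting signed identity among its $n+1$ faces is, under the same matching of face-removals and cochain-differential terms employed in the induction, precisely the cocycle relation $d\theta_{com}(n)^E = 0$.
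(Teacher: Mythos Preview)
Your proposal is correct and follows essentially the same approach as the paper: both apply the iterated lifting procedure of Lemma~\ref{lem:groupcohom} to $\text{Chains}(n)$, starting from $1\in\mathbb{Z}$ lifted to $[e_1]$, and inductively take $\ell_{k+1}(\gamma_1,\ldots,\gamma_k)=[\Delta_E(c_k,\ldots,c_0)]$; the paper compresses the induction into the phrase ``and so on,'' while you spell out the matching of face maps with the inhomogeneous differential. Your caveat about reconciling the product order in $c_i=\gamma_i\cdots\gamma_1 e_1$ with the cochain convention is well placed---indeed, under the standard left-module differential $(df)(\gamma_1,\ldots,\gamma_k)=\gamma_1 f(\gamma_2,\ldots,\gamma_k)-\cdots$, the lifts come out as $[\Delta_E(\gamma_1\cdots\gamma_k e_1,\ldots,\gamma_1 e_1,e_1)]$, so the merged term is $\gamma_{j+1}\gamma_j$ rather than $\gamma_j\gamma_{j+1}$, and removing $c_0$ matches $\gamma_1\ell_k(\gamma_2,\ldots,\gamma_k)$ only with that product order; this is precisely the bookkeeping you anticipated.
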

    \begin{proof}
        This follows immediately from the properties of a $\Delta$-extension and the lifting process of \ref{lem:groupcohom}: we lift $1\in \mathbb{Z}$ to $e_1$, whose group coboundary is the $1$-cochain $\gamma \mapsto (\gamma-1)e_1$ which lifts to
        \[
        \gamma\mapsto [\Delta_E(c_1(\gamma),c_0(\gamma))].
        \]
        The group coboundary of this lifts to the $2$-cochain
        \[
        \gamma\mapsto [\Delta_E(c_2(\gamma_1,\gamma_2),c_1(\gamma_1,\gamma_2),c_0(\gamma_1,\gamma_2))]
        \]
        and so on.
    \end{proof}

    \begin{rem}
        The construction of Shintani cocycles (though not the motivic setting) by \cite{Hill} (and used by \cite{LP}) introduces a lexicographical order on lines in order to make the cocycle property work. From our perspective, the data of this choice of order is the same data one needs to specify a certain $\Delta$-extension, though one must be careful: their construction is actually ``dual'' to ours in the sense specified later in Section \ref{section:cones}, so this is not quite precise. To be more exact, making free use of the language and concepts of that (later) section, their lexicographic order is used to specify choices of lower-dimensional conical faces to include, which \emph{does} correspond under conical duality corresponds to the degenerate wedge classes chosen in a $\Delta$-extension.
        
        From the point of view of the cohomology class, the particular values of the cocycle (on non-acyclic tuples of group elements only) resulting from these auxiliary choices are thus not of independent significance. We thank Jeehoon Park for pointing this out. 
    \end{rem}

    \begin{rem} \label{rem:inddelta}
    Observe that the ambiguities in the choices of $\Delta$-extensions only matter for tuples such that the lines associated to $c_0,\ldots, c_{n-1}$ are dependent. In particular, if $\gamma_1,\ldots, \gamma_{n-1}$ are generators of an anisotropic torus of rank $n-1$ inside $\GL_n(\Z)$, then the value of our cocycle on $(\gamma_1,\ldots, \gamma_{n-1})$ is independent of the $\Delta$-extension.
    \end{rem}

    \subsubsection{Lifting the symbol-valued cocycles} \label{section:lift}

    The cohomology class $\Theta^{S^{n-1}}(n)$ is valued in $C(n)$; in particular, we have quotiented by the fundamental class of the sphere. As we will see in section \ref{section:realization}, the realizations of these modules will live in a \emph{quotient} of motivic cohomology groups of $\mathbb{G}_m^n$ by a rank-one submodule coming from the ``orientation obstruction'' of the fundamental class. In \cite[\S5]{SV}, the authors show that their cocycle (which agrees with ours for the case $\SL_2(\Z)$) can be lifted over this obstruction after inverting $6$; in this section, we indicate how this generalizes. In particular, we will to lift our cocycles to be valued in $\tilde{C}(n)$, i.e lift over the copy of the rank-$1$ free module $\mathbb{Z}$ corresponding to the fundamental class of $S^{n-1}$. 
    
    To begin, note that applying the lifting process of Lemma \ref{lem:groupcohom} to the length-$(n+2)$ exact complex of $\mathrm{GL}_n(\mathbb{Q})$-modules
    \begin{equation} \label{eq:topaugchains}
    \mathbb{Z}(\sgn)\to \widetilde{\Chains}(n)_\bullet
    \end{equation}
    in the same way as we did with $\Chains(n)$ yields a cocycle $\varepsilon_n^E\in C^{n}(\mathrm{GL}_n(\mathbb{Q}), \mathbb{Z}(\sgn))$. 
    \begin{prop}
        The cocycle $\varepsilon_n^E$ represents the Euler class for the standard representation of $\mathrm{GL}_n(\mathbb{Q})$.
    \end{prop}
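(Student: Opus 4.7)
The plan is to identify the class $[\varepsilon_n^E] \in H^n(\text{GL}_n(\mathbb{Q}), \mathbb{Z}(\text{sgn}))$ with the Yoneda $\text{Ext}$-class of the length-$(n+2)$ exact complex \eqref{eq:topaugchains}, and then recognize that Ext-class as the Euler class through a standard characterization. The first half is formal: the iterated-lifting recipe of Lemma \ref{lem:groupcohom} is precisely the procedure for computing the iterated connecting homomorphism associated to such an exact sequence, i.e.\ the Yoneda composition with the extension class in $\text{Ext}^n_{\mathbb{Z}[G]}(\mathbb{Z},\mathbb{Z}(\text{sgn}))=H^n(G,\mathbb{Z}(\text{sgn}))$ (where $G = \text{GL}_n(\mathbb{Q})$). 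Since we start from $1 \in \widetilde{\text{Chains}}(n)_0^G = \mathbb{Z}$ (corresponding to the identity $\mathbb{Z} \to \mathbb{Z}$), the resulting class $[\varepsilon_n^E]$ is the extension class itself.

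Next I would invoke the standard characterization of the Euler class in group cohomology: for an $n$-dimensional representation $\rho: G \to \text{GL}_n$, the Euler class $e(\rho) \in H^n(G, \mathbb{Z}(\det \rho))$ is represented by the Yoneda class of the augmented singular chain complex of the unit sphere $S(\rho)$,
\[
0 \to \mathbb{Z}(\det \rho) \to C^{sing}_{n-1}(S(\rho)) \to \cdots \to C^{sing}_0(S(\rho)) \to \mathbb{Z} \to 0,
\]
where $\mathbb{Z}(\det \rho)$ on the left is the sign-twisted fundamental class. For $\rho$ the standard representation, $\det \rho = \text{sgn}$. Equivalently, this Ext class is the $d_n$-transgression of $1$ in the Hochschild--Serre spectral sequence of the sphere bundle $EG \times_G S^{n-1} \to BG$, which one could take as an alternative definition.

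Finally, I would exhibit a $G$-equivariant quasi-isomorphism between the reshifted $\widetilde{\text{Chains}}(n)_\bullet$ and the augmented singular chain complex of $S^{n-1}$. For each rational triangulation $T$, sending a geodesic simplex $\Delta(m_1,\ldots,m_k)$ to its geodesic parameterization $\Delta^{k-1} \to S^{n-1}$ defines a chain map $C_T \to C^{sing}_\bullet(S^{n-1})$, functorial in refinement. Passing to the colimit yields a $G$-equivariant chain map, which is a quasi-isomorphism since each finite-level $C_T$ already computes $H_*(S^{n-1})$ and filtered colimits preserve exactness. Since Yoneda Ext classes are invariant under quasi-isomorphism of resolutions, the two extension classes coincide and $[\varepsilon_n^E]$ is the Euler class. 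The main subtlety is the $G$-equivariance of the comparison: no single $C_T$ is $G$-stable, so equivariance only emerges after taking the colimit; once this is granted, the identification of the Ext classes is routine homological algebra.
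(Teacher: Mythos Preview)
Your overall strategy---identify $[\varepsilon_n^E]$ with the Yoneda Ext class of the augmented sphere complex, then recognize that Ext class as the Euler class---is sound and is a genuinely different route from the paper's proof. The paper instead invokes Poincar\'e duality to reinterpret $\widetilde{\text{Chains}}(n)$ as computing cohomology, identifies the double complex $C^\bullet(G,\widetilde{\text{Chains}}(n)_\bullet)$ with the equivariant cohomology of the sphere bundle, argues that the sum of lifts $\ell_1+\cdots+\ell_{n+1}$ represents the Thom class (as the class dual to the zero section), and then pulls back by the zero section to isolate $\varepsilon_n^E$ as the Euler class. Your approach is more purely homological and avoids Thom classes and Poincar\'e duality; the paper's approach is more geometric and, crucially, works directly with $\widetilde{\text{Chains}}(n)$ without needing to compare it to any other chain model.

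That said, there is a genuine gap in your comparison step. You claim that sending each geodesic simplex to ``its geodesic parameterization'' yields a $G$-equivariant chain map $\widetilde{\text{Chains}}(n)\to C^{\mathrm{sing}}_\bullet(S^{n-1})$ after passing to the colimit. You correctly flag equivariance as the subtle point, but misdiagnose it: the problem is not merely that no single $C_T$ is $G$-stable. The deeper issue is that $\text{GL}_n(\mathbb{Q})$ does not act on $S^{n-1}$ by isometries, so the geodesic (radial-linear) parameterization from unit vertices is not preserved. Concretely, take $\Delta=\Delta(e_1,\ldots,e_n)$ and $\gamma=\mathrm{diag}(2,1,\ldots,1)$: then $\gamma$ fixes $\Delta$ as an element of $\widetilde{\text{Chains}}(n)$ (it fixes each vertex ray), but $\gamma$ acts nontrivially on the underlying subset of $S^{n-1}$, so $\gamma\circ\sigma_\Delta\neq\sigma_\Delta$ for any parameterization $\sigma_\Delta$. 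Hence no choice of parameterizations makes the map strictly $G$-equivariant.

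This is repairable: the comparison exists $G$-equivariantly up to coherent chain homotopy, which suffices to identify Yoneda classes; or one can bypass singular chains entirely by invoking the more robust statement that for \emph{any} $G$-equivariant chain-level model of $S^{n-1}$, the resulting length-$n$ extension of $\mathbb{Z}$ by $\mathbb{Z}(\text{sgn})$ has Yoneda class equal to the primary obstruction to sectioning the sphere bundle, i.e.\ the Euler class. But as written, your equivariance claim is false and the proof is incomplete at exactly the step you flagged. The paper's Thom-class argument sidesteps this by never leaving $\widetilde{\text{Chains}}(n)$.
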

    \begin{proof}
        The complex $\widetilde{\Chains}(n)$ is the reduced homology complex of an ind-simplicial model of the $(n-1)$-sphere; since this is a closed manifold, we can view it also by Poincaré duality, as computing cohomology in the complementary degree. Then the double complex $C^\bullet(\mathrm{GL}_n(\mathbb{Q}), \widetilde{\Chains}(n)_\bullet)$ computes the equivariant cohomology of $S^{n-1}$. By Lemma \ref{lem:groupcohom}, the sum of the lifts associated to $E$ 
        \[\ell_1+\ldots+\ell_{n+1}\]
        (using the notation of the lemma) for the lifts of $1\in\mathbb{Z}$ in the augmentation for this complex then is a representative for the Thom class of the associated sphere bundle, since the Thom class is dual to the zero section. Pulling back to the base (i.e. a point with the trivial $\mathrm{GL}_n(\mathbb{Q})$-action; equivalently, $B\mathrm{GL}_n(\mathbb{Q})$) by the zero section kills all terms except $\varepsilon_n^E=\ell_{n+1}$, which is therefore a representative of the Euler class.
    \end{proof}

    Suppose now that $\Gamma$ is an $S$-arithmetic subgroup of $\mathrm{GL}_n(\mathbb{Q})$, where $S$ is any subset of primes which has nonempty complement. The following vanishing is a theorem of Sullivan \cite{Sul}:

    \begin{thm} \label{cor:sullivan}
        The cocycle $\varepsilon_n$, restricted to $\Gamma$, is a coboundary after multiplying by the greatest common denominator $d_{n,S}$ of $m^n(m^n-1)$, as $m$ ranges over all integers divisible only by primes not in $S$. For $S$ empty, $d_{n}:=d_{n,\emptyset}$ is twice the denominator of the $n$th Bernoulli number.
    \end{thm}

    The short exact sequence
    \[
    0\to\mathbb{Z}(\sgn)\to \tilde{C}(n)\to C(n)\to 0 
    \]
    yields a long exact sequence in cohomology
    \[
    \ldots \to H^{n-1}(\Gamma, \mathbb{Z}(\sgn))\to H^{n-1}(\Gamma,\tilde{C}(n))\to H^{n-1}(\Gamma, C(n)) \to H^n(\Gamma, \mathbb{Z}(\sgn)) \to \ldots
    \]
    After inverting $d_{n,S}$, the image of $\Theta^{S^{n-1}}(n)$ (i.e. the Euler class) in the rightmost term vanishes, and hence it lifts non-uniquely to be valued in $\tilde{C}(n)$, and the set of lifts of is a torsor under the image of $d_{n,S}^{-1} \cdot H^{n-1}(\Gamma, \mathbb{Z}(\sgn))$. We thus find that:

    \begin{cor} \label{cor:lifts}
        After inverting $d_{n,S}$ and restricting to $\Gamma$, representatives for these lifts are given by
        \[
        \underline{\gamma}\mapsto [\Delta_E(\underline{\gamma})]-\phi(\underline{\gamma})
        \]
        where $\phi(\underline{\gamma})$ ranges over primitives of $\varepsilon_n^E$.
    \end{cor}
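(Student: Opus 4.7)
The plan is to take the ``tautological'' set-theoretic lift $\tilde\theta(\underline{\gamma}) := [\Delta_E(\underline{\gamma})]$, now regarded as a cochain valued in $\widetilde{\text{Chains}}(n)_n$ rather than its quotient $\text{Chains}(n)_n$, and show that its failure to be a group cocycle is exactly the Euler cocycle $\varepsilon_n^E$. (Implicitly one lifts the $\Delta$-extension $E$ to $\widetilde{\text{Chains}}(n)$; the proof of Proposition \ref{prop:degen} works verbatim in the larger complex, and the resulting classes project to the original $E$.) Granted this identification, subtracting any primitive $\phi$ of $\varepsilon_n^E$ — which exists after inverting $d_{n,S}$ by Corollary \ref{cor:sullivan} — produces an honest cocycle lifting $\theta_{com}(n)^E$.

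The coboundary computation is essentially built into Lemma \ref{lem:groupcohom}. Applying the lifting procedure of that lemma to the augmented exact complex $\mathbb{Z}(\text{sgn})\to \widetilde{\text{Chains}}(n)_\bullet$ produces a sequence of lifts $\ell_1,\ldots,\ell_n,\ell_{n+1}$, with $\ell_{n+1}=\varepsilon_n^E$ by definition. Unwinding the same inductive computation as in the proof of Theorem \ref{thm:combrep} shows that one may take $\ell_n(\underline{\gamma})=[\Delta_E(\underline{\gamma})]=\tilde\theta(\underline{\gamma})$, now in $\widetilde{\text{Chains}}(n)_n$. The group coboundary of $\tilde\theta$ projects to zero in $\text{Chains}(n)_n$ (since $\theta_{com}(n)^E$ is already a cocycle there), hence lands in $\ker\bigl(\widetilde{\text{Chains}}(n)_n\to \text{Chains}(n)_n\bigr)=\mathbb{Z}(\text{sgn})$, and by the lifting recipe equals exactly $\varepsilon_n^E$.

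From this the conclusion is immediate: if $d\phi=\varepsilon_n^E$ in $C^\bullet(\Gamma,\mathbb{Z}(\text{sgn})[d_{n,S}^{-1}])$, then $\tilde\theta-\phi$ is a cocycle, its projection to $\text{Chains}(n)_n$ remains $\theta_{com}(n)^E$, and so its class is a lift of $\Theta_{com}(n)$. Two primitives differ by a cocycle in $Z^{n-1}(\Gamma,\mathbb{Z}(\text{sgn})[d_{n,S}^{-1}])$, which shifts $\tilde\theta-\phi$ by the corresponding class in $H^{n-1}(\Gamma,\mathbb{Z}(\text{sgn})[d_{n,S}^{-1}])$; this matches the torsor structure on lifts described just before the corollary, so primitives exhaust all lifts. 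The only potentially delicate step is verifying the on-the-nose equality $d\tilde\theta=\varepsilon_n^E$ (rather than merely up to coboundary), but once one chooses the same intermediate lifts $\ell_1,\ldots,\ell_{n-1}$ for both runs of Lemma \ref{lem:groupcohom} (for $\text{Chains}(n)$ and for $\widetilde{\text{Chains}}(n)$), this is forced by the construction.
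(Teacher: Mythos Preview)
Your argument is correct and is exactly the unwinding the paper has in mind: the corollary is stated without proof, as an immediate consequence of the lifting procedure of Lemma~\ref{lem:groupcohom} applied to the augmented complex \eqref{eq:topaugchains}, together with Corollary~\ref{cor:sullivan}. Your observation that $d\tilde\theta=\varepsilon_n^E$ on the nose (once the same intermediate lifts are used) and that every cocycle lift of $\theta_{com}(n)^E$ is of the form $\tilde\theta-\phi$ for some primitive $\phi$ are precisely the two points implicit in the paper's ``We thus find that''.
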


    Write $\theta_{E,\phi}^{S^{n-1}}(n)$ for the cocycle corresponding to the $(n-1)$-cochain $\phi$ transgressing the Euler cocycle; the corresponding class is
    \begin{equation} \label{eq:eulerlift}
    \Theta_\phi^{S^{n-1}}(n) = [\theta_{E,\phi}^{S^{n-1}}(n)] \in d_{n,S}^{-1} \cdot H^{n-1}(\Gamma, \tilde{C}(n)).
    \end{equation}
    Notice that the difference of two such $\phi$ is a cocycle, so the space of possible lifts over the Euler class is a torsor under $d_{n,S}^{-1} \cdot H^{n-1}(\GL_n(\Q), \Z(\sgn))$.

    \begin{rem}
        As mentioned earlier, this generalizes the construction in \cite[\S5]{SV} of a canonical lift of the analogue to the cocycle $\theta^{S^{n-1}}(2)$ for $\Gamma= \mathrm{SL}_2(\mathbb{Z})$. (When $n=2$, the choice of $E$ is immaterial, so we omit it from the notation.) In this case, $H^1(\Gamma)=H^2(\Gamma)=0$ after inverting $6$, so there is no ambiguity of lift, and the authors consequently find an explicit distinguished primitive of the Euler class $\varepsilon_2$ to give them a canonical lift. This raises the question whether there is some more canonical choice of lift in general even when $H^{n-1}(\Gamma)$ does not vanish; we do not know the answer.
    \end{rem}

    \subsubsection{The Steinberg quotient} \label{section:steinberg}
    We end this section by explaining how $\theta_E^{S^{n-1}}(n)$ can be modified into a parabolic cocycle if one quotients out by extra relations; this will also remove the need for a choice of $\Delta$-extension $E$ in the lifting process. After defining our realizations of these symbols, this will correspond to quotienting by certain elements in $K$-theory; see subsection \ref{section:steinmod} for details. This quotient will also be relevant for the application to Sharifi's conjectures.

    Let $\St(n)$ be the \emph{Orlik-Solomon complex} \cite{OS} defined as follows: it is the graded-commutative algebra generated in degree $1$ by symbols $[\ell]$ for $\ell\in \mathbb{P}^{n-1}(\mathbb{Q})$, with relations generated multiplicatively by the dependence relations
        \[
        \partial ([\ell_1]\wedge \ldots \wedge [\ell_k]) =0
        \]
    for any lines $\ell_1,\ldots, \ell_k$ spanning a space of rank strictly less than $k$. Here, the differential-graded structure is defined via $\partial [\ell] = 1$ and extended by the graded Leibniz rule. The relations generated as above are closed under the differential, and the resulting differential-graded algebra is exact. (More generally, for any configuration of lines, not just the set of \emph{all} rational ones, one can define an Orlik-Solomon complex with the same properties; this will be used later in Section \ref{section:smooth}.)
    
    Then we have a $\mathrm{GL}_n(\mathbb{Q})$-equivariant map of complexes
    \begin{equation} \label{eq:osquo}
    \Chains(n)_i \twoheadrightarrow \OS(n)_i
    \end{equation}
    sending 
    \[
    \Delta(r_1,\ldots, r_k) \mapsto [\mathbb{Q}r_1] \wedge \ldots \wedge [\mathbb{Q}r_k].
    \]
    This map \eqref{eq:osquo} is well-defined, since the Orlik-Solomon complex obeys alternation in the vertices, and stellar subdivision relations simply become dependence relations in the image.

    In top degree, the map \eqref{eq:osquo} corresponds to the $\mathrm{GL}_n(\mathbb{Q})$-equivariant quotient of the top spherical chains
    \begin{equation} \label{eq:stquo}
    R_{St}:C(n) \twoheadrightarrow \St(n):=\OS(n)_n
    \end{equation}
    where $\St(n)$ is our notation for a $\GL_n(\Q)$-module often called the \emph{Steinberg representation}. From the definition of the Orlik-Solomon algebra, we see that it can be described as generated by symbols $[\ell_1]\wedge \ldots \wedge[\ell_{n}]$ where $\ell_1,\ldots, \ell_n\in \mathbb{P}^{n-1}(\mathbb{Q})$, quotiented by the relations
    \begin{enumerate}
        \item $[\ell_1]\wedge \ldots \wedge [\ell_{n}]=0$ if the lines do not span $\mathbb{Q}^n$, and
        \item For any $\ell_0,\ldots, \ell_n$, the dependence relation \[
        \sum_{i=0}^n (-1)^i [\ell_0]\wedge \ldots \wedge  [\hat{\ell_i}] \wedge \ldots \wedge[\ell_n] = 0.
        \]
    \end{enumerate}
    From this description, one sees that the pushforward of $\theta_E^{S^{n-1}}(n)$ along \eqref{eq:stquo} is therefore independent of the choice of $E$, since all dependent tuples simply are sent to zero (as all independent tuples bound some acyclic simplex). Hence we get a cocycle 
    \[
    \theta^{St}(n):=(R_{St})_*\theta_E^{S^{n-1}}(n), (\gamma_1,\ldots, \gamma_{n-1})\mapsto [c_{n-1}(\underline{\gamma})] \wedge \ldots \wedge [c_0(\underline{\gamma})]
    \]
    independent of $E$, representing a class
    \[
    \Theta^{St}(n)\in H^{n-1}(\mathrm{GL}_n(\mathbb{Q}), \St(n))
    \]
    which one can see from relation (1) is parabolic.\footnote{After restricting to $\mathrm{SL}_n(\mathbb{Z})$, this cocycle is in fact the universal parabolic cocycle coming from Bieri-Eckman duality, as the Steinberg module is the dualizing module for $\mathrm{SL}_n(\mathbb{Z})$.} 
    
    We conclude this section with the following description of the kernel of \eqref{eq:stquo}, which will be useful when considering realizations:
    \begin{lem} \label{lem:stquo}
        The kernel of the map \eqref{eq:stquo} is generated by ``wedge'' classes of the form
        \[
        \Delta(r_1,\ldots, r_n)-\Delta(-r_1,r_2,\ldots,r_n)
        \]
        for independent tuples $(r_1,\ldots, r_n)$.
    \end{lem}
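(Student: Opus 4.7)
The plan is to show $\ker R_{St}$ equals the submodule $K$ generated by the wedge classes. The inclusion $K \subseteq \ker R_{St}$ is immediate, since $[\mathbb{Q}r_1] = [\mathbb{Q}(-r_1)]$ in the Orlik-Solomon algebra, so both $\Delta(r_1,\ldots,r_n)$ and $\Delta(-r_1,r_2,\ldots,r_n)$ map to the same wedge. For the reverse inclusion, I will construct an inverse $\phi: \text{St}(n) \to \text{Chains}(n)_n / K$ to the map $\bar R_{St}$ induced by $R_{St}$ on the quotient; this will force $\bar R_{St}$ to be an isomorphism and hence $\ker R_{St} = K$.

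To define $\phi$, set $\phi([\ell_1] \wedge \ldots \wedge [\ell_n]) := [\Delta(r_1, \ldots, r_n)] \bmod K$ for any choice of rays $r_i$ representing the $\ell_i$. The key observation is that any $n$ linearly independent rays in $\mathbb{Q}^n$ are automatically acyclic: if $v \in \mathbb{Q}^n$ is dual to the basis $(r_i)$ (i.e., $\langle v, r_i \rangle = 1$ for all $i$), then the rays all lie in the open half-space $\{\langle v, \cdot \rangle > 0\}$, so $\Delta(r_1, \ldots, r_n)$ is a well-defined generator. Well-definedness modulo $K$ with respect to the sign choices of the $r_i$ follows from the wedge relations applied position by position: the relation for the first slot is in $K$ by definition, and to flip the sign in any other slot I would conjugate by a transposition, observing that the two compensating antisymmetry signs cancel. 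Antisymmetry in the Steinberg presentation matches the orientation antisymmetry of simplices, and the vanishing relation for dependent line tuples is respected by convention.

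The main obstacle is verifying that $\phi$ respects the Steinberg dependence relation $\sum_{i=0}^n (-1)^i [\ell_0] \wedge \ldots \wedge \widehat{[\ell_i]} \wedge \ldots \wedge [\ell_n] = 0$. The degenerate cases (where some $n$-subset of the lines is dependent) I would dispatch directly: two coinciding lines force the nonzero terms of the alternating sum to pair off as equal, and their images under $\phi$ are identified modulo $K$ using the wedge relation. In the generic case, I would choose representative rays $r_0, \ldots, r_n$ all lying in a common open half-space, always possible by flipping signs with respect to a generic hyperplane. Then
\[
\eta := \sum_{i=0}^n (-1)^i \Delta(r_0, \ldots, \hat{r_i}, \ldots, r_n)
\]
is a top-degree cycle in $\widetilde{\text{Chains}}(n)_n$, being the formal boundary of an abstract $n$-simplex on $(r_0, \ldots, r_n)$ (a purely simplicial $\partial^2 = 0$ calculation), and is supported in the corresponding closed hemisphere. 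The crucial point is that in any triangulation $T$ of $S^{n-1}$ refining all the implicated simplices, the top-degree cycles in $C_T$ form the rank-one subgroup generated by the fundamental class $[S^{n-1}]_T$, which is supported on the \emph{entire} sphere; since $\eta$ is supported in a proper closed hemisphere, the corresponding integer multiplier must vanish, so $\eta = 0$ already in $\text{Chains}(n)_n$.

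Finally, verifying that $\bar R_{St} \circ \phi$ and $\phi \circ \bar R_{St}$ act as the identity on their respective generators is immediate from the definitions (the former on wedge generators, the latter on classes of independent simplices mod $K$). Thus $\bar R_{St}$ and $\phi$ are mutually inverse, and $\ker R_{St} = K$ as claimed.
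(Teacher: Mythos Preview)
Your argument is largely correct and takes a different route from the paper's. Where the paper argues by matching presentations---after imposing the sign-flip identifications, the stellar subdivision relations of Proposition~\ref{prop:simplesub} become exactly the Steinberg dependence relations, so the two quotients coincide---you instead build an explicit inverse $\phi$ and verify well-definedness directly. The hemisphere-support argument for the generic dependence relation is particularly clean: it uses only that $\ker\partial_n$ in $\widetilde{\text{Chains}}(n)$ equals $\mathbb{Z}\cdot[S^{n-1}]$, and that a cycle supported in a proper closed hemisphere cannot be a nonzero multiple of the fundamental class. This is a genuinely different (and arguably more geometric) way to see why the dependence relation holds.

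There is one gap. Your ``degenerate case'' (some $n$-subset of the $\ell_i$ dependent) is handled only when two lines coincide; but for $n>2$ an $n$-subset can be dependent with all lines distinct---e.g.\ three coplanar lines in $\mathbb{Q}^3$ together with a fourth off the plane. In that situation your $\eta'=\sum_{i:\,\text{indep}}(-1)^i\Delta(r_0,\ldots,\hat r_i,\ldots,r_n)$ is missing faces, and $\partial\eta'=0$ is no longer a formal $\partial^2=0$ computation. The cleanest fix is to invoke a $\Delta$-extension $E$ as in Proposition~\ref{prop:degen}: by property~(4), $[\Delta_E]$ of any dependent acyclic tuple vanishes, so the full alternating sum $\eta=\sum_{i=0}^n(-1)^i[\Delta_E(r_0,\ldots,\hat r_i,\ldots,r_n)]$ coincides with your $\eta'$, while property~(3) gives $\partial\eta=0$ formally. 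Then your hemisphere argument applies verbatim to every spanning $(n{+}1)$-tuple; the non-spanning case is trivial since every term already vanishes by relation~(1).
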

    \begin{proof}
        The top-dimensional spherical chains are generated by acyclic simplices together with stellar subdivision relations, while the Steinberg module is generated by independent tuples modulo dependence relations. If one imposes all the identification of spherical simplices
        \[
        \Delta(r_1,\ldots, r_n)\sim \Delta(\pm r_1,\ldots, \pm r_n),
        \]
        for any combination of signs, then the relations resulting from the stellar subdivision relations are precisely the dependence relations. These identifications can be deduced, by (anti)symmetry, from the identifications
        \[
        \Delta(r_1,\ldots, r_n)\sim \Delta(-r_1,r_2,\ldots,r_n)
        \]
        since we can then change one sign at a time and bootstrap to the general case.
    \end{proof}

    \section{Construction and application of the motivic cocycles} \label{section:motivictheta}

    \subsection{Realization map for top-dimensional chains} \label{section:realization}

    We now turn to constructing the realization map associating classes in motivic cohomology/Milnor $K$-theory to symbols. This section will be devoted to proving the following theorem:
    
    \begin{thm} \label{thm:gerssym}
    There exists a map of $\mathrm{GL}_n(\mathbb{Q})$-modules
    \[\tilde{\rho}:\widetilde{\Chains}(n)_{n} \to K_M^n(k(\mathbb{G}_m^n))^{(0)}.\]
    defined on generators by\footnote{Here, the matrix columns can be any non-zero vector in the ray; see discussion following the theorem statemnt for why this is well-defined.}
    \begin{equation} \label{eq:gerssymdef}
    [\Delta(m_1,\ldots, m_n)] \mapsto \begin{pmatrix}m_1&\ldots & m_n\end{pmatrix}_* \{1-z_1,\ldots, 1-z_n\} \in K_n^M(k(\mathbb{G}_m^n))^{(0)}
    \end{equation}
    The image of the fundamental class in $\Z\cong H_{n}(\widetilde{\Chains}(n))\subset \widetilde{\Chains}(n)$ 
    is the class $\{-z_1,\ldots, -z_n\}$, so $\tilde{\rho}$ descends to a map
    \[\rho:\Chains(n)_{n} \to K_M^n(k(\mathbb{G}_m^n))^{(0)}/\{-z_1,\ldots, -z_n\}.\]
    \end{thm}
    
    The hard part of the theorem is proving the relations, so we first view $f_k$ as a map from $\mathbb{Z}\{\mathbf{T}_{n+1}\}$. 
    
    We note that replacing $m_i$ by a scalar multiple is immaterial because each $1-z_i$ is invariant under $[a]_*:\mathbb{G}_m\to \mathbb{G}_m$ in the corresponding coordinate, so pre-composing $\begin{pmatrix}m_1&\ldots & m_i\end{pmatrix}_*$ with these isogenies does not change the definition of $\rho$. The $\mathrm{GL}_n(\mathbb{Q})$-equivariance of the definition is then completely formal, from functoriality of pushforwards.
    
    It remains to check that the relations in $\cS_\bullet$ between the classes of simplices hold. By Proposition \ref{prop:simplesub}, it suffices to check the acyclic stellar subdvision relations.
    
    \begin{prop} \label{prop:gersrel}
    For any integer $2\le r \le n$, each acyclic independent tuple $\underline{m}=(m_1,\ldots, m_n)$ of rays and ray $m$ lying on the great circle corresponding to the face spanned by $(m_1,\ldots, m_n)$ and sharing some hemisphere with all of them, the relation
        \begin{equation}
        \rho[\Delta(m_1,\ldots, m_k)]=\sum_{i=1}^r \rho[\Delta(m_1,\ldots, \hat{m_i},m,m_{i+1},\ldots, m_n)]
        \end{equation}
    coming from \eqref{eq:simpgen} holds. 
    \end{prop}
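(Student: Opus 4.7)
The plan is to lift the putative Gersten identity up to the Bloch cycle complex, where it can be exhibited as an explicit cubical boundary. Via the chain map $\psi$ of Section \ref{section:gersten}, the symbol $\begin{pmatrix}m_1 & \cdots & m_k\end{pmatrix}_*\{1-z_1,\ldots,1-z_k\}$ in $K_k^M(k(L))$ arises as the image of a canonical Bloch cycle $Z(\underline{m}) \in z^n(\mathbb{G}_m^n, k)$: namely, the pushforward along the torus map with columns $(m_1,\ldots,m_k)$ of the graph of $(z_1,\ldots,z_k)\mapsto(1-z_1,\ldots,1-z_k) \in \Box^k$. It will therefore suffice to construct a cycle $W \in z^n(\mathbb{G}_m^n, k+1)$ whose cubical boundary, modulo degenerate cycles, equals
\[
Z(m_1,\ldots,m_k) - \sum_{i=1}^r Z(m_1,\ldots,\widehat{m_i},m,m_{i+1},\ldots,m_k);
\]
applying $\psi$ then yields the claimed identity in the Gersten complex.

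I would first reduce to the case $r=k$. When $r<k$, the columns $m_{r+1},\ldots,m_k$ appear identically in every term, so both sides arise as external products of an $r$-dimensional Bloch cycle with a common ``outer'' cycle built from $(m_{r+1},\ldots,m_k)$; a bounding $W$ is obtained as the external product of the bounding cycle for the sub-relation in the span of $(m_1,\ldots,m_r)$ with this outer cycle, using that external products of reduced Bloch cycles respect the differential up to sign. For the remaining case $r=k$, write $m = \sum_{i=1}^k a_i m_i$ with $a_i\in\mathbb{Q}_{>0}$ (using the rescaling freedom noted in the theorem). I would build $W$ by introducing an auxiliary cube coordinate $s\in\Box^1=\mathbb{P}^1\setminus\{1\}$ parameterizing an algebraic family of torus maps $\mathbb{G}_m^k\to\mathbb{G}_m^n$ whose columns specialize to $(m_1,\ldots,m_k)$ at $s=0$ and, at $s=\infty$, break up into the collection of subdivided column tuples $(m_1,\ldots,\widehat{m_i},m,m_{i+1},\ldots,m_k)$; then $W$ is the pushforward of the graph of $(1-z_1,\ldots,1-z_k)$ under this family.

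The main obstacle is ensuring that $W$ meets every face of $\Box^{k+1}$ properly, so as to define a class in $z^n(\mathbb{G}_m^n,k+1)$, and verifying that the ``side'' faces in the coordinates $t_1,\ldots,t_k$ land in the degenerate summand of the cubical complex while the $s=\infty$ face decomposes cleanly as the required sum. A naive linear interpolation in $s$ will in general fail proper intersection along loci where some $1-z_j$ vanishes, and the interpolating family must be chosen carefully — reflecting the positivity and integrality of the $a_i$ — to avoid Steinberg-type degeneracies; this is the substantive content of the ``key step'' mentioned in the abstract. Once proper intersection and face behaviour are verified, the boundary computation is a routine unwinding of the cubical face maps, and functoriality of $\psi$ concludes.
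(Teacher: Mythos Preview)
Your reduction to $r=k$ via external/cup products matches the paper's. The construction of the bounding cycle in the core case $r=k$, however, diverges from the paper and, as written, has a genuine gap.

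The paper does not interpolate. After setting $m_{k+1}:=m$, it observes that the alternating sum
\[
\sum_{i=1}^{k+1} (-1)^{i} \begin{pmatrix}m_1&\ldots& \hat{m_i} & \ldots & m_{k+1}\end{pmatrix}_* \Gamma(1-z_1,\ldots, 1-z_k)
\]
is \emph{formally} the full cubical boundary of the single cycle
\[
W \;=\; \begin{pmatrix}m_1&\ldots & m_{k+1}\end{pmatrix}_* \Gamma(1-z_1,\ldots, 1-z_{k+1}),
\]
the pushforward of the $(k+1)$-fold graph along the non-finite map $\mathbb{G}_m^{k+1}\to\mathbb{G}_m^k$ given by the $k\times(k+1)$ matrix with columns $m_1,\ldots,m_{k+1}$. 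Each face $t_j=0$ sets $z_j=1$, which kills the $j$th column and produces exactly the $j$th term of the sum; the faces $t_j=\infty$ contribute nothing. No auxiliary parameter $s$, no family, and no degeneration into several components is needed.

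The entire content of the proof is then the verification that $W$ meets all cubical faces properly despite the pushforward map being non-finite. Faces carrying at least one $0$-label are handled by noting that setting some $z_j=1$ reduces to a \emph{finite} pushforward (by a full-rank $k\times k$ submatrix), which automatically preserves proper intersection. Faces labelled entirely by $\infty$ are shown to have \emph{empty} intersection with $W$, and this is exactly where acyclicity enters: after a left $\text{GL}_k(\mathbb{Q})\cap M_k(\mathbb{Z})$ change of coordinates one may assume the first $k$ columns are diagonal, say $\mathrm{diag}(x_1,\ldots,x_k)$, with last column $(y_1,\ldots,y_k)$; acyclicity forces $\mathrm{sgn}(x_i)=\mathrm{sgn}(y_i)$ for some $i$, and a short sign-chasing argument shows this is incompatible with any point of $W$ lying on an all-$\infty$ face.

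By contrast, your proposal leaves the bounding cycle essentially undefined: you do not specify the ``algebraic family of torus maps'', and it is unclear how a single family over $\Box^1$ is supposed to specialise at $s=\infty$ to a \emph{sum} of $k$ distinct cycles. Your claim that one may take $m=\sum a_i m_i$ with all $a_i>0$ is also not justified by the hypothesis: acyclicity only provides a common positive linear functional, not positivity of the coefficients, and rescaling the $m_i$ by positive rationals does not alter their signs. The paper's $W$ avoids all of this, using only the hemisphere condition via the sign argument above.
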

    \begin{proof}
        We may reduce to the case $r=n$ as follows: the claimed relation can be written as 
        \[
        \begin{pmatrix}m_1&\ldots & m_n\end{pmatrix}_* \{1-z_1,\ldots, 1-z_n\} = \sum_{i=1}^r \begin{pmatrix}m_1&\ldots& \hat{m_i} & m & \ldots   & m_k\end{pmatrix}_* \{1-z_1,\ldots, 1-z_n\}.
        \]
        The left-hand side factors as the cup product
        \[
        \begin{pmatrix}m_1&\ldots & m_r\end{pmatrix}_* \{1-z_1,\ldots, 1-z_r\} \smile \begin{pmatrix}m_{r+1}&\ldots & m_n\end{pmatrix}_* \{1-z_{r+1},\ldots, 1-z_n\} 
        \]
        and the right-hand side as 
        \[
        \left(\sum_{i=1}^r \begin{pmatrix}m_1&\ldots& \hat{m_i} & m & \ldots   & m_r\end{pmatrix}_* \{1-z_1,\ldots, 1-z_r\}\right)
        \smile \begin{pmatrix}m_{r+1}&\ldots & m_n\end{pmatrix}_* \{1-z_{r+1},\ldots, 1-z_n\} 
        \]
        so it suffices to prove that 
        \[
        \begin{pmatrix}m_1&\ldots & m_r\end{pmatrix}_* \{1-z_1,\ldots, 1-z_r\} = \sum_{i=1}^r \begin{pmatrix}m_1&\ldots& \hat{m_i} & m & \ldots   & m_r\end{pmatrix}_* \{1-z_1,\ldots, 1-z_r\}
        \]
        But this is the pullback of a top-rank stellar relation from the quotient $\mathbb{G}_m^n\twoheadrightarrow \mathbb{G}_m^n/G$, for $G$ the image of 
        \[
        \begin{pmatrix}m_{r+1}&\ldots & m_n\end{pmatrix}: \mathbb{G}_m^r \to \mathbb{G}_m^n.
        \]
        We therefore henceforth assume that $r=n$, and need to prove the relation 
        \[
        \begin{pmatrix}m_1&\ldots & m_n\end{pmatrix}_* \{1-z_1,\ldots, 1-z_n\} - \sum_{i=1}^n \begin{pmatrix}m_1&\ldots& \hat{m_i} & m & \ldots   & m_n\end{pmatrix}_* \{1-z_1,\ldots, 1-z_n\} =0.
        \]
        Taking the Bloch cycle description
        \[
        K_n^M(k(\mathbb{G}_m^n)) \hookrightarrow z^n(k(\mathbb{G}_m^n)\times \Box^n)/\partial z^{n+1}(k(\mathbb{G}_m^n)\times \Box^{n+1})
        \]
        we see that it suffices to show that 
        \begin{equation} \label{eq:finalstell}
        \sum_{i=1}^{n+1} (-1)^i \begin{pmatrix}m_1&\ldots& \hat{m_i} & \ldots   & m_{n+1}\end{pmatrix}_* \Gamma(1-z_1,\ldots, 1-z_n) \in \partial z^{n+1}(k(\mathbb{G}_m^n)\times \Box^{n+1}).
        \end{equation}
        for all acyclic tuples of rays $(m_1,\ldots, m_{n+1})$ such that each sub-$n$-tuple is full rank. Formally, \eqref{eq:finalstell} is the image under the cubical face maps of
        \begin{equation} \label{eq:finalcycle}
        \begin{pmatrix}m_1&\ldots & m_{n+1}\end{pmatrix}_* \Gamma(1-z_1,\ldots, 1-z_{n+1})\in \partial z^{n+1}(k(\mathbb{G}_m^n)\times \Box^{n+1})
        \end{equation}
        where the matrix denotes the map
        \begin{equation}\label{eq:n+1}
        \begin{pmatrix}m_1&\ldots & m_{n+1}\end{pmatrix}: \mathbb{G}_m^{n+1}\to \mathbb{G}_m^n.
        \end{equation}
        However, this seems not to use the acyclicity condition; what gives? The problem is that \eqref{eq:n+1} is not a finite map, so the pushed forward cycle may have improper intersection with the faces. Indeed, we claim that \eqref{eq:finalcycle} intersects all faces properly exactly when $(m_1,\ldots, m_{n+1})$ is acyclic. We will prove this on the level of cycles in 
        \[
        \mathbb{G}_m^n \times \Box^{n+1}
        \]
        since this certainly implies it for the restriction to the generic point. Write $M$ for the matrix $(m_1,\ldots, m_{n+1})$, and define the $(n+1)$-variable monomials
        \[
        p_j(z_1,\ldots, z_{n+1}) = \prod_{i=1}^{n+1} z_i^{M_{j,i}}
        \]
        whose exponents are the $j$th row of $M$. Then in $\mathbb{G}_m^n \times \Box^{n+1}$, the cycle \eqref{eq:finalcycle} can be described as the closure of the locus
        \[
        (p_1, p_2, \ldots, p_n, 1-z_1,\ldots, 1-z_{n+1}). 
        \]
        Its intersection with a codimension-$d$ cubical face is then indexed by a labelled subset $I\in \{1,\ldots, n+1\}$ of cardinality $d$, with each element $i$ of the subset labelled by $\ell_I(i)\in \{0,\infty\}$; this face is given by the intersection of the cycle with the locus
        \[
        \bigcap_{i\in I} \{ 1-z_i = \ell_I(i)\} 
        \]
        i.e. fixing each $z_i$ corresponding to the indexing set to be either $1$ or $\infty$. We wish to check, for each $I$, whether or not this intersection has the correct codimension, i.e. codimension $n+|I|$; the cycle \eqref{eq:finalcycle} meets all faces properly if and only if all codimensions are correct. 

        First consider the case where $I$ has at least one label of $0$; without loss of generality, we assume that it corresponds to $1\in I$, i.e. the relation $1-z_1=1\Leftrightarrow z_1=0$. The intersection of \eqref{eq:finalcycle} with $\{z_1=0\}$ is then the closure of the locus
        \begin{equation} \label{eq:minus1loc}
        (p_1(z_1=0),\ldots, p_n(z_1=0), 0, 1-z_2,\ldots, 1-z_n).
        \end{equation}
        where the notation indicates that we plug in $0$ in the $i$th place. Write $M'$ for the submatrix of $M$ given by deleting the first column; by assumption, it has full rank $n$. Viewing $M'$ as associated to a map
        \[
        \mathbb{G}_m^n \times \Box^n \to \mathbb{G}_m^n \times \Box^{n+1}
        \]
        by its natural action on the toric part, and by inclusion $\Box^n\hookrightarrow \Box^{n+1}$ in the last $n$ coordinates, we see that the locus \eqref{eq:minus1loc} is then the finite pushforward
        \begin{equation}\label{eq:minus1cyc}
        M'_* \Gamma(1-z_1,\ldots, 1-z_n).
        \end{equation}
        The intersection of \eqref{eq:finalcycle} with a face corresponding to $I$ is then the intersection of \eqref{eq:minus1cyc} with the face corresponding to the labelled set $I\setminus \{1\}$. But \eqref{eq:minus1cyc} is a finite pushforward of a cycle meeting all faces properly, so we conclude that \eqref{eq:finalcycle} meets the face corresponding to $I$ properly as well. Thus, any face with at least one label of $0$ always intersects \eqref{eq:finalcycle} properly; it therefore suffices to check the intersection with faces labelled only with $\infty$s. 

        We claim that if $(m_1,\ldots, m_{n+1})$ is acyclic, this intersection is always empty, and thus trivially proper.\footnote{When $(m_1,\ldots, m_{n+1})$ fails to be acyclic, the corresponding stellar simplicial relation should not hold, and thus \eqref{eq:finalcycle} must not meet all faces properly. The simplest example of what happens in this case: if $n=1$, $m_1=1$, and $m_2=-1$, then the locus \eqref{eq:finalcycle} is the closure of $(z_1z_2^{-1},1-z_1,1-z_2)$. Its intersection with the unique codimension-two face labelled with two $\infty$s should therefore be codimension $3$, i.e. zero-dimensional. However, the points of the curve parameterized by $(g,\infty,\infty)$ are in the closure for all $g\in \mathbb{G}_m$, since the point corresponding to each fixed $g$ is in the closure (as $t\to \infty$) of the curve $z_1=gt$, $z_2=t$ with free parameter $t$.} Note that the property of having empty intersection with the $\infty$-labelled faces is invariant under left multiplication of $M$ by elements of $\mathrm{GL}_n(\mathbb{Q})\cap M_n(\mathbb{Z})$. Via left multiplication by such a matrix, we can always turn the first $n$ columns of the matrix into scalar multiples of the standard basis. Thus, it suffices to check matrices of the form
        \[
        \begin{pmatrix}
        x_1 &  & & & y_1 \\
            & x_2    &  & & y_2 \\
            &      & \ldots &  & \ldots \\
            &        & & x_n   & y_n
        \end{pmatrix}
        \]
        where each of the $x_i$ and $y_i$ must be nonzero by the assumption that every $n$-by-$n$ submatrix of $M$ is full rank, and $\sgn(x_i)=\sgn(y_i)$ for at least one $i$ by the acyclicity assumption.

        In this case, suppose without loss of generality that $I=\{1,2,\ldots, k\}$ for some $k\le n$, with $\ell(j)=\infty$ for each $j\in I$. Assume now for the sake of contradiction that \eqref{eq:finalcycle} intersects the face corresponding to $I$ nontrivially; in particular, suppose it contains a point with $z_1=z_2=\ldots=z_k=\infty$ but 
        \[
        p_i=z_i^{x_i}z_{n+1}^{y_i} \in \mathbb{G}_m(\overline{k})
        \]
        for $\sgn(x_i)=\sgn(y_i)$. If $i\in \{1,2,\ldots, k\}$, this is impossible, since at this point we must have $z_{n+1}=0\Rightarrow 1-z_{n+1}=1$, which means our point fails to lie in the algebraic cube $\Box^{n+1}$. Otherwise, $x_j$ and $y_j$ have opposite signs for $j=1,2,\ldots, k$, meaning that $z_{n+1}=\infty$ as well at this point. But then
        \[
        p_i=z_i^{x_i}z_{n+1}^{y_i} \in \mathbb{G}_m(\overline{k})
        \]
        implies that $z_i=0\Rightarrow 1-z_{i}=1$, again a contradiction. We conclude the intersection with the face corresponding to $I$ is in fact empty, as desired.

        It remains only to show that the fundamental class of $S^{n-1}$ is sent to the generator
        \[
        \{-z_1,\ldots, -z_n\}.
        \]
        Indeed, we can decompose the fundamental class as a sum of simplicial orthants 
        \[
        [S^{n-1}]=\sum_{I\in \{\pm 1\}^n} [\Delta(I)]
        \]
        where $\Delta(I)$ is the simplex corresponding to $\sigma(I) (I_1\cdot e_1,\ldots, I_n\cdot e_n)$ where $\sigma(I)$ is an arbitrary even permutation if $(I_1\cdot e_1,\ldots, I_n\cdot e_n)$ is positively oriented, is an arbitrary odd permutation otherwise. Under $f_n$, this is sent to the sum
        \[
        \sum_{I\in \{\pm 1\}^n} \sigma \{ 1-z_1^{I_1},\ldots, 1-z_n^{I_n}\} = \left\{ \frac{1-z_1}{1-z_1^{-1}} ,\ldots \frac{1-z_n}{1-z_n^{-1}} \right\} = \{ -z_1,\ldots, -z_n\}.
        \]
    \end{proof}
    With the realization map in hand, the proof of Theorem \ref{thm:thm1} is complete. In particular, we obtain the following classes: 
    \[
    \Theta(n) =  \rho_*\Theta^{S^{n-1}}(n) \in H^{n-1}(\mathrm{GL}_n(\mathbb{Q}), (K_n^M(k(\mathbb{G}_m^n))/\{-z_1,\ldots, -z_n\})^{(0)}).
    \]
    and, if $\Gamma\subset \mathrm{GL}_n(\mathbb{Q})$ is $S$-arithmetic for a co-nonempty set of primes $S$, for any transgression $\phi$ of $\varepsilon_n$ a class
    \[
    \Theta_\phi(n) =  \rho_*\Theta_\phi^{S^{n-1}}(n) \in H^{n-1}(\Gamma, K_n^M(k(\mathbb{G}_m^n))^{(0)})[d_{n,S}^{-1}].
    \]
    For any $\Delta$-extension $E$, the cocycle representative $\theta_E(n):=\rho_*\theta_E^{S^{n-1}}(n)$ for the former class is given by
    \[
    (\gamma_1,\ldots, \gamma_{n-1}) \mapsto \begin{pmatrix}c_1&\ldots & c_k\end{pmatrix}_* \{1-z_1,1-z_2,\ldots, 1-z_n\} \in  K_n^M(k(\mathbb{G}_m))/\{-z_1,\ldots, -z_n\}
    \]
    with $c_i=\gamma_i\ldots \gamma_1 e_1$ whenever these columns are independent; for any fixed $\Delta$-extension $E$, one can equally in principle work out the image of any tuple with non-independent such $c_i$, though we do not currently see a systematic way to do this. Similarly, the latter class is represented by $\theta_{E,\phi}(n):=\rho_*\theta_{E,\phi}^{S^{n-1}}(n)$ and under the same assumptions sends 
    \begin{equation} \label{eq:thecocycle}
    (\gamma_1,\ldots, \gamma_{n-1}) \mapsto \begin{pmatrix}c_1&\ldots & c_k\end{pmatrix}_* \{1-z_1,1-z_2,\ldots, 1-z_n\} - \phi(\underline{\gamma}) \{-z_1,\ldots, -z_n\}
    \end{equation}
    in $K_n^M(k(\mathbb{G}_m))[d_{n,S}^{-1}]$. 
    
    \subsubsection{Modular symbols from the Steinberg quotient} \label{section:steinmod}

    We now describe the realization of the parabolic, Steinberg module-valued, cocycle of section \ref{section:steinberg}. This amounts to determining the image of the kernel of \eqref{eq:stquo} under the map $f$, which by Lemma \ref{lem:stquo}, is generated by the $\mathrm{GL}_2(\mathbb{Q})$-orbit of 
    \[
    \{1-z_1,1-z_2,\ldots, 1-z_n\} - \{1-z_1^{-1},1-z_2,\ldots, 1-z_n\} = \{-z_1,1-z_2,\ldots, 1-z_n\}.
    \]
    We thus see it suffices to quotient out by the degree-$n$ part of the ideal $I$ of the Milnor $K$-theory ring generated by the symbols $-z_i\in K_1^M(k(\mathbb{G}_m))$, whereupon we obtain a $\mathrm{GL}_2(\mathbb{Q})$-equivariant map
    \[
    \Theta^{MS}(n):\St(n) \to (K_n^M(k(\mathbb{G}_m))/I)^{(0)}
    \]
    sending
    \[
    [\ell_1]\wedge \ldots \wedge [\ell_n]\mapsto \begin{pmatrix}\ell_1&\ldots & \ell_k\end{pmatrix}_* [\{1-z_1,1-z_2,\ldots, 1-z_n\}] \in  K_n^M(k(\mathbb{G}_m))/I.
    \]
    From this modular symbol, one can also deduce an explicit parabolic cocycle 
    \[H^{n-1}(\mathrm{GL}_2(\mathbb{Q}), (K_n^M(k(\mathbb{G}_m))/I)^{(0)}\] 
    represented by
    \[
    (\gamma_1,\ldots, \gamma_{n-1}) \mapsto \begin{pmatrix}c_1&\ldots & c_k\end{pmatrix}_* [\{1-z_1,1-z_2,\ldots, 1-z_n\}] \in  K_n^M(k(\mathbb{G}_m))/I
    \]
    with $c_i=\gamma_i\ldots \gamma_1 e_1$. When we wish to use the Steinberg quotient, however, we will generally work directly with the modular symbol, as it retains more information than the associated cohomology class or cocycle.

    \begin{rem} \label{rem:eqpoly}
        As suggested in \cite[\S5]{SV}, the cocycles of the form $\Theta(n)$ come from equivariant motivic polylogarithms for the action of $\GL_n(\Z)$ on the group scheme $\G_m^n$. The argument of Sharifi-Venkatesh essentially proves this for $n=2$ by realizing their chain complex in a Gersten complex computing motivic cohomology; our realization map could similarly be extended to map from the whole spherical chain complex to a Gersten complex as well. However, for general $n$, the Gersten complex does not necessarily compute motivic cohomology, so we would need instead a symbol complex with fewer relations, in order to map to the Bloch cycle complex (which \emph{does} compute motivic cohomology); such a complex can in fact be constructed using matroids. We omit these arguments from this article due to their considerable length and technical overhead, and irrelevance to our main results; however, we will use the matroid approach and prove the relationship with polylogarithms in the sequel to this article, in the elliptic setting (which degenerates at the cusps to the setting of the present article). In that setting, having the formalism of equivariant motivic polylogarithms is useful for comparison reasons.

        However, for the \emph{regulator} of the motivic cocycle, we have included the proof the comparison with an equivariant polylogarithm class in de Rham/coherent cohomology, in Appendix \ref{appendix:a}, as this requires less technical overhead. However, the flavor of the argument would be exactly the same in motivic cohomology.
    \end{rem}

    \subsection{Specialization at torsion sections and Sharifi maps}
    
    For briefness, we have only been working over the generic point until now, but to construct and analyze the properties of the maps in Theorem \ref{thm:sharifi}, we will need to consider specializations of our cocycles to torsion subschemes: let $\Gamma\subset \mathrm{GL}_n(\mathbb{Q})$ be any subgroup. The proof of Theorem \ref{thm:gerssym} applies identically to show that the realization map factors, as a $\Gamma$-map, through $H^n(U_\Gamma, \Z(n))^{(0)}$, where $U_\Gamma$ is defined by
    \begin{equation} \label{eq:spreadgers}
        U_\Gamma := \varinjlim_H \mathbb{G}_m^n-H
    \end{equation}
    and the direct limit ranges over finite subarrangements of the hyperplane arrangement which is the $\Gamma$-orbit of 
    \[
    \mathbb{G}_m^{n-1}\subset \mathbb{G}_m^{n}
    \]
    embedded as the kernel of $1-z_n$. For $n\ge 3$, the norm residue isomorphism theorem and cohomological dimension arguments show that all pulled back cocycles by torsion points are trivial; thus, we work only in the case $n=2$. (For $n=1$, we get classical cyclotomic $N$-units, for pullback by $N$-torsion.) 

    Suppose now that $\Gamma=\Gamma_0(N)$ fixes the line generated by a torsion section $s_N:\mathrm{Spec }\, \Q(\mu_N)\to \mathbb{G}_m^2$ which is $(1,\zeta_N)$ in the coordinates $z_1,z_2$. 
    
    While $\Gamma_0(N)$ does not fix $s_N$, since it fixes the line generated by $x$, we can define a homomorphism
    \[
        \sigma: \Gamma_0(N) \to (\Z/N\Z)^\times
    \]
    by the rule $\gamma s_N = \sigma(\gamma) s_N$, with kernel the index-$\varphi(N)$ subgroup $\Gamma_1(N)$ fixing $s_N$. We can then define an action of $\Gamma_0(N)$ on $\Q(\mu_N)$ by 
    \[
        \gamma \mapsto ([\sigma(\gamma^{-1})]:\zeta_N \mapsto \zeta_N^{\sigma(\gamma^{-1})}),
    \]
    which yields also pullback maps on motivic cohomology. Now, the pullback 
    \[
    s_N^*:H^2(U_\Gamma, \Z(2))^{(0)} \to H^n(\Q(\mu_N), \mathbb{Z}(n))
    \]
    is a priori only $\Gamma_1(N)$-equivariant, but with our newly defined action of $\Gamma_0(N)$ on the right-hand side, one can check (cf. \cite[\S4]{SV}) that it is actually $\Gamma_0(N)$-equivariant. We thus get a corresponding specialization of our Eisenstein cocycle
    \[
    s_N^* \Theta(2) \in H^{1}(\Gamma_0(N), H^2(\Q(\mu_N), \mathbb{Z}(2)))
    \]
    
    In fact, the localization sequence \eqref{prop:loc} for 
    \[
    \bigoplus_{\mathfrak{p} \nmid N} \Z[\mu_N]/\mathfrak{p} \hookrightarrow \Z[\mu_N, N^{-1}]
    \]
    yields a left-exact sequence
    \[
    H^2(\Z[\mu_n, N^{-1}], \Z(2)) \hookrightarrow K^M_2(\Q(\mu_N))\xrightarrow{\partial} \bigoplus_{\mathfrak{p} \nmid N} K^M_1(\Z[\mu_N]/\mathfrak{p})
    \]
    where the last arrow is are the tame symbols which vanish on integral-at-$\mathfrak{p}$ elements. The injectivity is because the Milnor $K$-theory of finite fields vanishes above degree $1$, as one can always express $1$ as the sum of quadratic residues. Taking the restriction of the action via $\sigma$ of $\Gamma_0(N)$ to the cohomology of $\Z[\zeta_N,N^{-1}]$, we see that our pullbacks are actually valued in the integral-away-from-$N$ submodule, i.e. we actually have a cocycle
    \[
    s_N^*\Theta(2) \in H^{1}(\Gamma_0(N),H^n(\Z[\zeta_N,N^{-1}], \Z[\tfrac{1}{2}](2)))
    \]
    This argument will be applied implicitly also to everything that follows, as well in the next subsection.
    
    One can equally make a ``Steinberg'' version of this construction: if we let $\St(2)^\circ$ be the $\Gamma_0(N)$-invariant submodule of $\St(2)$ generated by lines not reducing to $[0:1]$ modulo $N$ (i.e. cusps not in the $\Gamma_0(N)$-orbit of $[0:1]$), then the argument of section \ref{section:steinmod} affords us a map
    \[
    \Theta^{MS}(2)_{(N)}: \St(2)^\circ \to \varinjlim_H \,H^2(\mathbb{G}_m^2-H,\mathbb{Z}(2))/I_{(N)}
    \]
    where $I_{(N)}$ is module of relations spanned by symbols in the orbit of 
    \begin{equation} \label{eq:idef}
    \{-z_1,1-z_2\}
    \end{equation}
    under matrices of the form $L=\begin{pmatrix}\ell_1 & \ell_2\end{pmatrix}$, for $\ell_1,\ell_2$ cusps not in the $\Gamma_0(N)$-orbit of $[0:1]$, as above. This affords us a modular symbol
    \[
    s_N^*\Theta^{MS}(2)_{(N)}: [\ell_1]\wedge [\ell_2]\mapsto s_N^* \begin{pmatrix} \ell_1& \ell_2\end{pmatrix}_*\{1-z_1,1-z_2\} \in  H^2(\Z[\mu_N,N^{-1}], \Z[\tfrac{1}{2}](2))/\text{extra relations from }I_{(N)}
    \]
    After pullback, the module of relations $I_{(N)}$ consists of elements of the form 
        \[
        \{-\zeta_{N}^i, -\zeta_{N}^j\}, \{-\zeta_{N}^i, 1-\zeta_{N}^j\}.
        \]
    The former relations $\{-\zeta_{N}^i, -\zeta_{N}^j\}=0$ are all true up to $2$-torsion by alternation of the Steinberg symbol. The latter relations vanish upon taking the projection onto the plus part
    \[
    (\bullet )_+:  \{x,y\} \mapsto\frac{1}{2}(\{x,y\} + \{\overline{x}, \overline{y}\}),
    \]
    since, with $2$ inverted, we have
    \[
    2 \{-\zeta_{N}^i, 1-\zeta_{N}^j\}_+ = \{\zeta_{N}^{-i}, 1-\zeta_{N}^{-j}\} + \{\zeta_{N}^i, 1-\zeta_{N}^j\} = \{\zeta_N^i,- \zeta_N^j\} = 0.
    \]
    If we restrict now to the subgroup $\Gamma_1(N)$ fixing $s_N:=(1,\zeta_N)$, we thus obtain a specialization
    \[
    (\Pi_N^\circ)_+:=(s_N^* \Theta^{MS}(2)_{(N)})_+:\St(2)^\circ \to H^2(\Z[\zeta_N, N^{-1}],\Z[\tfrac{1}{2}](2))_+
    \]
    which is a $\Gamma_1(N)$-invariant modular symbol with the trivial action on the target. However, if $N=p^k$, the relations $\{-\zeta_{N}^i, 1-\zeta_{N}^j\}$ are zero before projection: the cyclotomic distribution relation 
    \[
        1-\zeta_{p^k}^{pt} = \prod_{i=0}^{p-1} 1-\zeta_{p^k}^{t+p^{k-1}i}
    \]
    means we can assume $(j,p)=1$, in which case 
    \[
    \{\zeta_{N}^i, 1-\zeta_{N}^j\}= k\{\zeta_{N}^j, 1-\zeta_{N}^j\}=0
    \]
    by the Steinberg relation $\{x,1-x\}=0$, where here $kj\equiv i \pmod{p^k}$. Thus in this case, we have a map
    \[
        \Pi_N^\circ:=(s_N^* \Theta^{MS}(2)_{(N)}):\St(2)^\circ \to H^2(\Z[\zeta_N, N^{-1}],\Z[\tfrac{1}{2}](2))
    \]
    In the remainder of this section, for brevity of notation we will write everything without the $+$ projection, with the understanding that we always mean the $+$ part except for when $N=p^s$.

    Consider now a \emph{unimodular} symbol $[\ell_1]\wedge [\ell_2]$, i.e. so that one for which the associated matrix formed from integer generators of these lines satisfies
    \[
    \det \begin{pmatrix}\ell_1 & \ell_2\end{pmatrix} \in \GL_2(\Z).
    \]
    The map of \cite[\S4]{SV} is defined in terms of these, writing all symbols as sums thereof via their ``connecting sequences'' (which we avoid due to using pushforwards). To compare our map with theirs, we have that if $\ell_1,\ell_2\ne [0:1]$ (meaning that that $s_N$ is not in the polar locus and the pullback is well-defined),
    \[
    s_N^* \Theta^{MS}(2)_{(N)}([\ell_1]\wedge [\ell_2])) = \left(\begin{pmatrix}\ell_1 & \ell_2\end{pmatrix}^{-1} s_N\right)^* \{1-z_1,1-z_2\}= \{ 1- \zeta_N^{-v}, 1-\zeta_N^u\}  \in H^2(\Z[\zeta_N, N^{-1}],\Z[\tfrac{1}{2}](2))
    \]
    where $(u,v)$ is the top row of the matrix $\begin{pmatrix}\ell_1 & \ell_2\end{pmatrix}$. On symbols disjoint from $[0:1]$, therefore, up to sign and convention of basis, our specialization coincides with the Eisenstein cocycle defined in \cite[\S4]{SV}, as they agree on unimodular generators. (In loc. cit., $(u,v)$ is the bottom row of the analogous matrix; this corresponds to swapping the role of the standard basis lines $e_1$ and $e_2$.)

    \begin{rem}
        The above calculation shows that the relations of the form $\{-\zeta_{N}^i, 1-\zeta_{N}^j\}$, where $\zeta_N^i$ and $\zeta_N^j$ have orders divisible by distinct primes, also do not appear if we omit symbols in the Steinberg module containing lines $[a:b]$ with $a$ divisible by primes dividing $N$; thus, the full (non-plus part) $\Pi_N^\circ$ can also be constructed (and shown to be Hecke equivariant, etc., as below) if we are willing to restrict our set of symbols: this corresponds to restricting the set of cusps to those not in the $\Gamma_0(p)$-orbit of $\infty$ for any prime $p|N$ (in the language of the following section; see below). This remark explains how the calculations of \cite[\S4]{SV} result in a parabolic cocycle while only inverting $2$ in the coefficients, as they only work with homology of the closed curve.
    \end{rem}

    \subsubsection{Hecke operators and modular symbols}

    In this section, we define the Hecke operators and prove Theorem \ref{thm:sharifi}, for 
    \[
        (\Pi^\circ_N)_+ := (s_N^*\Theta^{MS}(2)_{(N)})_+: \St(2)^\circ_{\Gamma_1(N)} \to H^2(\Z[\mu_N,N^{-1}], \Z[\tfrac{1}{2}])_+
    \]
    though as discussed previously, we will omit the $+$ signs for ease of notation, with the understanding that this is only \emph{actually} allowed if $N=p^s$ (or by omitting cusps, as in the remark at the end of last section). Let us explain the notation (also used in Theorem \ref{thm:sharifi}). Write $X_1(N)$ for the compactified modular curve of level $\Gamma_1(N)$, and $C_1(N)$ for its set of cusps. Then there is a natural identification between coinvariants of the Steinberg module and the Borel-Moore homology:
    \[
    \St(2)_{\Gamma_1(N)} \xrightarrow{\sim} H_1(X_1(N), C_1(N), \Z)
    \]
    sending $[\ell_1]\wedge [\ell_2]$ to the image of the geodesic path usually denoted $\{\ell_1,\ell_2\}$ between the cusps corresponding to $\ell_1,\ell_2\in \P^1(\Q)$ under the uniformization of $Y_1(N):=X_1(N)-C_1(N)$ by the complex upper half-plane; similarly, the co-invariants of Steinberg symbols $\St(2)^\circ_{\Gamma_1(N)}$ disjoint from $\Gamma_1(N)[0:1]$ can be identified with the homology relative to the \emph{restricted} cusps $C_1(N)^\circ$ disjoint from $\Gamma_0(N)\cdot \infty$ \cite{AR}. Hence, modular symbols (respectively, modular symbols restricted away from $\infty$, modular symbols restricted to a single cusp) valued in a trivial $\Gamma_1(N)$-module can be identified with maps from the homology of $(X_1(N),C_1(N))$ (respectively, $(X_1(N),C_1(N)^\circ)$). More generally, we can take any $\Gamma_1(N)$-invariant set of cusps, and obtain as coinvariants the homology relative to only those cusps.

    Now we define Hecke operators on such symbols. A useful notion will be the \emph{adjugate} of an invertible matrix
    \[
    \mathrm{adj}(M) := (\det M)M^{-1}.
    \]
    Note that the adjugate of an integer matrix is an integer matrix, and that the scalar $\det M = M\cdot \mathrm{adj}(M)$ acts trivially on the Steinberg module, and hence on the image of any modular symbol equivariant for a group containing these matrices: in other words, $M$ and its adjugate act as inverses on such elements.
    
    Now, for any double coset 
    \[
    \Gamma_1(N)\alpha \Gamma_1(N) \in \Gamma_1(N)\backslash \GL_2(\Q)/\Gamma_1(N)
    \]
    with left coset decomposition
    \[
    \Gamma_1(N)\alpha \Gamma_1(N) = \bigcup_i \alpha_i \Gamma_1(N),
    \]
    we can define an associated operator $T_\alpha$ on a modular symbol $c: \St(2)_{\Gamma_1(N)} \to A$ (for an abelian group $A$) by
    \[
        (T_\alpha c)([\ell_1,\ell_2]) = \sum_i c(\alpha_i^{-1} [\ell_1,\ell_2]).
    \]
    One can check this is well-defined independent of the choice of left coset representatives, and corresponds to the classical Hecke action by correspondences on (co)homology of modular curves (and thereby on modular forms, etc.).

    First, if $\alpha_d\in \Gamma_0(N)$ with $\sigma(\alpha)=d$ (i.e. the lower right entry is $d$ modulo $N$), then from previous discussion, the double coset $T_\alpha$ depends only on $d$; we write $T_\alpha=\langle d\rangle$ and call it a diamond operator.

    Next, for any prime $p \nmid N$, we have the double coset operator 
    \[
    T_p:=\Gamma_1(N) \begin{pmatrix} p & \\ & 1 \end{pmatrix} \Gamma_1(N)
    \]
    and its dual
    \[
    T_p^*:=\Gamma_1(N) \begin{pmatrix} 1 & \\ & p \end{pmatrix} \Gamma_1(N)
    \]
    One computes the relation $T_p^* = \langle p \rangle T_{p}$. We will consider the following set of coset representatives for $T_p^*$: by \cite[Theorem 4.3.7]{SV}, there exists a set of representatives $\alpha_{i,p}$, $0\le i \le p$, given by 
    \[
    \alpha_{i,p} = \begin{pmatrix}1 & \\ i & p \end{pmatrix}
    \]
    for $i<p$, and 
    \[
    \alpha_{p,p} = \begin{pmatrix}p & \\  & 1 \end{pmatrix} \alpha_p.
    \]
    Here, $\alpha_{p}$ is the representative for the diamond operator $\langle p\rangle$. Note that that considered as maps $\Q^2/\Z^2 \to \Q^2/\Z^2$, the kernel of these $p+1$ matrices are the $p+1$ subgroups of order $p$, and that $\text{adj}(\alpha_i)$ fixes $(0,1)$ modulo $N$.
    
    For primes $p|N$, we similarly have 
    \[
    U_p:=\Gamma_1(N) \begin{pmatrix} p & \\ & 1 \end{pmatrix} \Gamma_1(N).
    \]
    We will consider the left coset decomposition
    \[
    U_p = \bigcup_{i=0}^{p-1} \begin{pmatrix}p & Ni \\ & 1 \end{pmatrix} \Gamma_1(N).
    \]
    This operator also has a dual, which we ignore (since our symbol will have no simple corresponding equivariance property).
    \begin{proof}[Proof of Theorem \ref{thm:sharifi}]
    From previous results, we find that 
    \[
    \langle d \rangle  s_N^*\Theta^{MS}(2)_{(N)}([\ell_1,\ell_2]) = s_N^* \alpha_*^{-1} \Theta^{MS}(2)_{(N)}([\ell_1,\ell_2]) = [d]^* s_N^*\Theta^{MS}(2)_{(N)}([\ell_1,\ell_2])
    \]
    since $\alpha s_N= [d] s_N$. 
    
    We conclude that $\langle d \rangle \Pi_N^\circ = [d]^* \Pi_N^\circ$, as desired.

    For the operator $T_p^*$, $p\nmid N$, we have
    \begin{align}
        T_p^* s_N^*\Theta^{MS}(2)_{(N)}([\ell_1,\ell_2]) &= \sum_{i=0}^p s_N^*\Theta^{MS}(2)_{(N)}(\alpha_{i,p}^{-1}[\ell_1,\ell_2]) \\
        & = \sum_{i=0}^p s_N^*(\mathrm{adj}(\alpha_{i,p}))_*\Theta^{MS}(2)_{(N)}([\ell_1,\ell_2]) 
        \\ 
        & = \sum_{i=0}^p s_N^*(\mathrm{adj}(\alpha_{i,p}))^*(\mathrm{adj}(\alpha_{i,p}))_*\Theta^{MS}(2)_{(N)}([\ell_1,\ell_2]) 
    \end{align}
    Now observe that for matrix $M:\G_m^2\to \G_m^2$, the correspondence $M^*M_*$ is precisely consists of $(x, x\cdot  \ker M)$ where $\cdot \ker M$ means translation by this subgroup. Thus, since the kernel of the various $\alpha_{i,p}$ are precisely the $p+1$ lines of order $p$ torsion, we have the equality of correspondences
    \[
    \sum_{i=0}^p \alpha_{i,p}^*(\alpha_{i,p})_* = (x, px + x\cdot \ker\,[p]) \subset \mathbb{G}_m^2
    \]
    since the union of all $p$-torsion lines is precisely the $p$-torsion, except the identity is counted once per line. These correspondences also all preserve the open set $U_{\Gamma_1(N)}$, and hence can be applied to the values of the spread-out cocycle $\Theta^{MS}(2)_{(N)}$. Since $\Theta^{MS}(2)_{(N)}([\ell_1,\ell_2])$ is $[p]_*$-invariant, we finally obtain 
    \begin{align}
        T_p^* s_N^*\Theta^{MS}(2)_{(N)}([\ell_1,\ell_2]) &= s_N^*(p+[p]^*)\Theta^{MS}(2)_{(N)}([\ell_1,\ell_2]) \\
        & = ps_N^* \langle p \rangle s_N^*\Theta^{MS}(2)_{(N)}([\ell_1,\ell_2]) 
    \end{align}
    so that $T_p^*\Pi_n^\circ = (p+\langle p \rangle)\Pi_n^\circ$, as desired.

    Finally, for the operator $U_p$, $p|N$, we write:
    \begin{align}
        U_p s_N^*\Theta^{MS}(2)_{(N)}([\ell_1,\ell_2]) &= \sum_{i=0}^{p-1} s_N^*\begin{pmatrix}1 & -Ni \\ & p\end{pmatrix}_*\Theta^{MS}(2)_{(N)}([\ell_1,\ell_2]) \\
        & = \sum_{i=0}^{p-1} s_N^*\begin{pmatrix}1 & -\frac{N}{p}i \\ & 1\end{pmatrix}_*\begin{pmatrix}1& \\ & p \end{pmatrix}_*\Theta^{MS}(2)_{(N)}([\ell_1,\ell_2]) \\
        & = \sum_{i=0}^{p-1} (\zeta_p^i, \zeta_N)^*\begin{pmatrix}1& \\ & p \end{pmatrix}_*\Theta^{MS}(2)_{(N)}([\ell_1,\ell_2]) \\
        & = s_N^* \begin{pmatrix}p& \\ & p \end{pmatrix}_*\Theta^{MS}(2)_{(N)}([\ell_1,\ell_2]) \\
        & = s_N^* \Theta^{MS}(2)_{(N)}([\ell_1,\ell_2]) 
    \end{align}
    from which $U_p=1$ on $\Pi_N^\circ$ follows.
    \end{proof}

    We also furnish a new proof of the $N$-integrality of $\Pi_N^\circ$ (or $(\Pi_N^\circ)_+$, depending on $N$) when restricted to the homology of the compact curve (compare \cite[Lemma 4.2.7]{SV}, \cite[Lemma 3.3.11]{FK}). For this, we make the observation that the homology of the closed modular curve $X_1(N)$ is a submodule 
    \[
     H_1(X_1(N)) \hookrightarrow H_1(X_1(N),C_1(N))
    \]
    described in terms of the Steinberg module as being generated by symbols of the form $\gamma_1[1:0]\wedge \gamma_2[1:0]$ for $\gamma_1,\gamma_2\in \Gamma_1(N)$, i.e. geodesic paths between cusps in the orbit of the zero cusp (or, equivalently, any given fixed cusp).
    
    \begin{thm}
        The restriction of $\Pi_N^\circ$ (respectively $(\Pi_N^\circ)_+$, when $N$ has distinct prime divisors) to $H_1(X_1(N))$ takes values in the submodule
        \[
        H^2(\Z[\mu_N],\Z[\tfrac{1}{2}](2)) \hookrightarrow H^2(\Z[\mu_N, N^{-1}],\Z[\tfrac{1}{2}](2)) 
        \]
        (respectively with $+$ parts).
    \end{thm}
    \begin{proof}
        Again, we will write the proof without $+$ projections, for brevity. By Proposition \ref{prop:loc}, the $H^2(\Z[\mu_N],\Z[\tfrac{1}{2}](2))$ is the submodule of $H^2(\Q(\mu_N),\Z[\tfrac{1}{2}](2))$ on which the various tame symbols
        \[
        H^2(\Q(\mu_N),\Z[\tfrac{1}{2}](2)) \to H^1(\Z[\mu_N]/\mathfrak{p}, \Z[\tfrac{1}{2}](2))
        \]
        vanish, for \emph{all} primes $\mathfrak{p}$. Let 
        \[
        L=\begin{pmatrix} \ell_1 & \ell_2 \end{pmatrix}
        \]
        be the pushforward matrix associated to a Steinberg symbol in $\St(2)^\circ$. We have the following pullback/pushforward functoriality of tame symbols \cite{Lev2}:
        \begin{equation}
        \begin{tikzcd}
            H^2(U_{\Gamma_1(N)},\Z[\tfrac{1}{2}](2)) \arrow[d,"L_*"] \arrow[r, "\partial"] &\bigoplus\limits_{D} H^1(D, \Z[\tfrac{1}{2}](2))\arrow[d,"L_*"] \\
            H^2(U_{\Gamma_1(N)},\Z[\tfrac{1}{2}](2)) \arrow[d,"s_N^*"] \arrow[r, "\partial"] &\bigoplus\limits_{D} H^1(D, \Z[\tfrac{1}{2}](2))\arrow[d,"s_N^*"] \\
            H^2(\Q(\zeta_N),\Z[\tfrac{1}{2}](2)) \arrow[r, "\partial"] &\bigoplus\limits_{\mathfrak{p}} H^1(\Z[\zeta_N]/\mathfrak{p}, \Z[\tfrac{1}{2}](2))
        \end{tikzcd}
        \end{equation}
        where the direct sums range over irreducible components of the codimension-$1$ locus $\mathbb{G}_m-U_{\Gamma_1(N)}$ and closed points of $\mathrm{Spec}\,\Z[\zeta_N]$ respectively, indexed by convention and familiarity by ``$D$'' for divisor (in $\mathbb{G}_m^2$) for the geometrically-flavored $U_{\Gamma_1(N)}$, and by the associated prime ideal $\mathfrak{p}$ for the cyclotomic number ring. In the lower right vertical arrow, $s_N^*$ is zero by convention if $s_N$ fails to properly intersect $D$. 

        Now, the value $\Pi_N^\circ([\ell_1]\wedge [\ell_2])$ is the image in the bottom left group of $\{1-z_1,1-z_2\}$ in the top left group, whose tame symbol is
        \[
        \{1-z_2\}_{z_1=0}-\{1-z_1\}_{z_2=0}.
        \]
        Let the coordinates of (integral generators of) the lines $\ell_1,\ell_2$ be considered as maps $(\ell_1),(\ell_2):\G_m\mapsto \G_m^2$. Then commutativity of the above diagram implies that 
        \begin{align}
        \partial \Pi_N^\circ([\ell_1]\wedge [\ell_2]) & = s_N^*L_* \{1-z_2\}_{z_1=0}-\{1-z_1\}_{z_2=0} \\
        & = (1,\zeta_N)^* [(\ell_1)_*\{1-z\} - (\ell_2)_*\{1-z\}]
        \end{align}
        which vanishes when $\ell_1=\gamma\ell_2$ for $\gamma \in \Gamma_1(N)$, as in that case $(\ell_1) = \gamma \circ (\ell_2)$ as maps, and we have 
        \[
        (1,\zeta_N)^*\gamma_* = (\gamma^{-1}(1,\zeta_N))^*=(1,\zeta_N)^*
        \]
        as maps.
    \end{proof}

    \begin{rem}
        A certain level compatibility for these Eisenstein cocycles was proven relative to more restrictive sets of cusps in \cite[Theorem 1.1]{LcW}, and asked whether a version existed relative to larger sets of cusps. We expect that the methods of that article applied in our formalism furnish a proof of this extended level compatibility. We do not go into the details as the main purpose of the authors of that article was to prove Hecke equivariance results, which we have established by other means.
    \end{rem}
    \subsubsection{Distributions over torsion sections}

    Besides pulling back by torsion sections in a ``good locus,'' it is actually possible to obtain (motivic or de Rham) cocycles valued in distributions over all torsion sections; analogous cocycles have been constructed (in related but distinct settings) in \cite{BPPS} and \cite{RX}. In this section, we construct such distributions in the motivic setting, but only for $n=2$.

    Write $\D(\mathbb{Z}_p^n,M)$ for the module of $M$-valued locally constant distributions on $\mathbb{Z}_p^n$, i.e. the dual of the module of $M$-valued locally constant and compactly supported functions $\cS(\mathbb{Z}_p^n,M)$ (``Schwartz'' functions). This latter module is generated by $M$-valued indicator functions on subsets of the form $a+L$, where $L\subset \mathbb{Z}_p^n$ is a full-rank $\mathbb{Z}_p$-lattice. If $M$ is a left $\mathrm{GL}_n(\mathbb{Z})$-module, a distribution $\mu\in\D(\mathbb{Z}^n_p,M)$ takes a left $\mathrm{GL}_n(\mathbb{Z})$-action via
    \[
    (\gamma \mu)(\varphi) := \gamma \cdot\mu( \varphi\circ \gamma).
    \]
    where $\gamma$ acts via its natural left action on $\mathbb{Z}^n$, and thus by the above right pullback action of postcomposition with test functions $\varphi\in \mathcal{S}(\Z^n)$. We also write 
    \[
    \D(\mathbb{Z}^n_p,M)^{(0)} \subset \D(\mathbb{Z}^n_p,M)
    \]
    for the scalar-invariant distributions, i.e. those for which the set $U$ has the same measure as the set $pU$. We write also the periodic function $\exp(x):=e^{2\pi i x}$ from $\Q_p/\Z_p$ to the $p$-power roots of unity, and $e_1,e_2$ for the lines corresponding to the standard basis of $\Z^2$. Then we have:

    \begin{prop}
        There is a $\GL_2(\Z)$-equivariant modular symbol
        \[
        \Theta_{\D_p}(2): \St(2)\to \D(\Z_p^2, H^2(\Z[\zeta_{p^\infty}, p^{-1}], \Z[\tfrac{1}{2}](2)))
        \]
        which sends
        \[
        [e_1]\wedge [e_2] \mapsto \mu_e
        \]
        where $\mu_e$ is the distribution
        \[
        1_{(\alpha, \beta)+p^k\Z_p^2}\mapsto  \{1-\exp(\alpha/p^k), 1-\exp(\beta/p^k)\} 
        \]
        for $\alpha,\beta$ both not divisible by $p^k$, and otherwise sends
        \[
        1_{(0, \beta)+p^k\Z_p^2} \mapsto \{p^{-k}, 1-\exp(\beta/p^k)\},
        \]
        symmetrically 
        \[
        1_{(\alpha,0)+p^k\Z_p^2} \mapsto \{1-\exp(\alpha/p^k), p^{-k}\},
        \]      
        and 
        \[
        1_{(0, 0)+p^k\Z_p^2} \mapsto \{p^{-k}, p^{-k}\} = 0
        \]
        Here, the section $(\alpha/p^k,\beta/p^k)$ means the pair of roots of unity $(\exp(2\pi i \alpha/p^k),\exp(2\pi i \beta/p^k))$
    \end{prop}
    \begin{proof}
        The distribution relations for $\mu_e$ follow from the cyclotomic distribution relations
        \[
            \prod_{p x' = x} 1-\zeta_{p^{k+1}}^{x'} = 1-\zeta_{p^k}^x
        \]
        for $x\in \Z/p^k\Z$, $x'\in \Z/p^{k+1}\Z$, and 
        \[
            \prod_{i=0}^{p-1} 1-\zeta_p^i = p.
        \]
        Moving this around by the $\GL_2(\Z)$-action gives the distribution relations on arbitrary symbols.
        
        By the standard theory of modular symbols, it remains therefore only to verify that $\mu_e$ satisfies the two Manin relations
        \[
        \mu_e + \begin{pmatrix}1&\\ &1 \end{pmatrix}\mu_e = 0,\, \mu_e + \begin{pmatrix}&1\\ 1&1 \end{pmatrix} \mu_e + \begin{pmatrix}1&1\\1&\end{pmatrix}\mu_e=0
        \]
        The former is obvious by the fact that Steinberg symbols are alternating with $\Z[\frac{1}{2}]$-coefficients. The latter can be checked after evaluating at $1_{(\alpha, \beta)+p^k\Z_p^2}$: in the case that $\alpha,\beta\ne 0$ and $\alpha \ne \beta$, none of the associated pullbacks have coordinate zero, so follows from pulling back the corresponding relation for $\Theta^{MS}(\D_p)$ by $(\alpha,\beta)$. 
        
        Otherwise, if $\alpha=0$, the relation is
        \[
        \{p^{-k}, 1-\exp(\beta/p^k)\} + \{1-\exp(\beta/p^k), p^{-k}\}+\{p^{-k},p^{-k}\}
        \]
        which is plainly zero by alternation (with $2$ inverted). The cases $\beta=0$, $\alpha=\beta$, are similar visibly zero (and in fact follow by $\GL_2(\Q)$-equivariance).
    \end{proof}

    We remark that by restricting to open sets corresponding to $p^k$-torsion points with respect to the fixed lattice $\Z^2$, we can also obtain finite-level versions valued in the groups $K_2^M(\Z[1/p^k,p^{-1}])\otimes \Z[\tfrac{1}{2}]$. Specializing further at a $\Gamma_1(p^k)$-fixed point, the results of this section can be used to define a map from homology relative \emph{all} cusps of $X_1(p^k)$ to $K$-theory; however, it no longer has nice Hecke equivariance properties. Thus, it is unclear whether it is of interest to the Sharifi conjectures, though their dynamics under the $U_p,U_p^*$ operators may be interesting. 
    
    In particular, observe that the specialized values at symbols bordering $\infty$ will be of the form $\{ p^{-k}, 1-\zeta_{p^k}^\bullet\}$. Previous literature considered the extension of $\Pi_N^\circ$ to cusps over $\infty$ by simply extending by zero; we have no concrete evidence that our extension is more useful, except that the values above are suggested by certain distribution relations.

    \section{The de Rham cocycles and applications} \label{section:derhamtheta}

    In this section, we pass from our motivic/$K$-theory-valued class to one valued in differential forms, from which we will be able to extract actual numbers (or distributions) related to $L$-values of totally real fields.

    There is a regulator map \cite[\S 2.1.5]{LW}
    \[
    (d\log)^{\wedge n}: H^n(X,\mathbb{Z}(n)) \to \Omega^n_X, \{u_1,\ldots, u_n\} \mapsto d\log u_1 \wedge \ldots \wedge d\log u_n
    \]
    which exists for any scheme $X$, functorial for pullbacks and pushforwards, to turn our motivic-valued cocycles into differential form-valued cocycles. We deduce from our motivic cocycle
    \[
    (d\log)^{\wedge n}_* \Theta(n) \in H^{n-1}(\GL_n(\Q), (\Omega^n_{k(\G_m^n)})^{(0)}/\langle (z_1\ldots z_n)^{-1} dz_1\wedge \ldots \wedge dz_n\rangle)
    \]
    which can be represented by a homogeneous cocycle which sends
    \[
    (\gamma_0,\ldots, \gamma_{n-1}) \mapsto \begin{pmatrix} \gamma_0 e_1 & \ldots \gamma_{n-1}e_1\end{pmatrix}_* \frac{(-1)^n}{(1-z_1)\ldots (1-z_n)}dz_1\wedge \ldots \wedge dz_n
    \]
    whenever the first columns of $\gamma_0,\ldots, \gamma_{n-1}$ are independent (or an analogous condition if we replace $e_1$ with any rational ray). If we write $z_i=\exp(2\pi i t_i)$ for $1\le i \le n$, then we have
    \[
    dz_1\wedge \ldots \wedge dz_n = z_1z_2\ldots z_n dt_1\wedge \ldots \wedge dt_n.
    \]
    The $n$-form $dt_1\wedge \ldots \wedge dt_n$ transforms by the character $\det$ under pullback by $\GL_n(\Q)$, so 
    \[
    \omega \mapsto \frac{\omega}{dt_1\wedge \ldots \wedge dt_n}
    \]
    furnishes a $\GL_n(\Q)$-equivariant isomorphism
    \[
    \iota_t: \Omega^n_{k(\G_m^n)}/\langle dz_1\wedge \ldots \wedge dz_n\rangle \to (\mathcal{M}_{\G_m^n}/\langle 1\rangle )(-\det)
    \]
    by the push-pull formula
    \[
    \gamma_*(f\, dt_1\wedge \ldots \wedge dt_n) = \frac{1}{\det \gamma} \gamma_*(f\, \gamma^* dt_1\wedge \ldots \wedge dt_n)=\frac{1}{\det \gamma} \gamma_*(f)\,  dt_1\wedge \ldots \wedge dt_n
    \]
    Here, $\mathcal{M}_{\G_m^n}(-\det)$ denotes the meromorphic functions with their pushforward action twisted by the character $\det^{-1}$. We then define our differential Eisenstein cocycle as
    \[
        \Theta^{dR}(n) := (\iota_t)_*\circ (d\log)^{\wedge n}_* \Theta(n) \in H^{n-1}(\GL_n(\Q), (\mathcal{M}_{\G_m^n}/\langle 1\rangle (-\det))^{(0)})
    \]
    which can be represented by a homogeneous cocycle sending
    \[
    (\gamma_0,\ldots, \gamma_{n-1}) \mapsto (\det M)^{-1} M_* \frac{z_1z_2\ldots z_n}{(z_1-1)\ldots (z_n-1)}
    \]
    for \[
    M=\begin{pmatrix} \gamma_0 e_1 & \ldots \gamma_{n-1}e_1\end{pmatrix}
    \]
    whenever the first columns of $\gamma_0,\ldots, \gamma_{n-1}$ are independent. Here, the superscript $(0)$ means trace-fixed \emph{with} the determinant twist; i.e. $[a]_*f = a^nf$ for $a\in \mathbb{N}$.
    
    In fact, if we restrict to $\GL_n(\Z[S^{-1}])$ for some set of primes $S$, Corollary \ref{cor:sullivan} and \eqref{eq:eulerlift} tell us that associated to a lift $\phi$ of the Euler class we obtain
    \[
    \Theta^{dR,\phi}(n) \in d_{n,S}^{-1} \cdot H^{n-1}(\GL_n(\Z[S^{-1}]), \mathcal{M}_{\G_m^n}(-\det)^{(0)}). 
    \]
    For our application to $L$-values of totally real fields, we will be interested in the case $S=\emptyset$, for which $d_n$ is the denominator of the halved Bernoulli number $\frac{B_n}{2}$. For this reason, we will also restrict to the subgroup $\GL_n(\Z)$ in all that follows, though many of the same statements also hold with essentially the same proofs for $S$-arithmetic groups. 

    It will be useful to introduce the shorthand
    \[
    \tilde{r}:=\iota_t\circ d\log^{\wedge n}\circ \tilde{\rho}:\tilde{C}(n) \to \mathcal{M}_{\G_m^n}(-\det)^{(0)}
    \]
    for the realization map from rational spherical chains to meromorphic functions.

    \subsubsection{Stabilizations}

    To obtain relations to totally real $L$-values, we will need certain combinations of torsion-section translates of our de Rham cocycles. In this section, we introduce the requisite notation, and make some basic observations about the resulting stabilizations when some degree-zero properties are satisfied.

    Over $\overline{\Q}$, the torsion of $\G_m^n$ is the $n$th power of the roots of unity $\mu_\infty^n$; given $x\in \mu_\infty^n$, we will write 
    \[
    t_x: \G_m^n\mapsto \G_m^n
    \]
    for the translation map which is multplication by the \emph{inverse} $x^{-1}$. More generally, via the complex uniformization $\G_m^n \cong \C^n/\Z^n$, we can identify torsion sections with elements of $\Q^n/\Z^n$. Then to any Schwartz function $\varphi \in \cS(\Q^n/\Z^n)$ with period $\Z^n$ (in other words, finitely supported functions on $\Q^n/\Z^n$), we define an associated pullback operator on $\mathcal{M}_{\G_m^n}(-\det)$ by 
    \[
    t_{\varphi}^* := \sum_{x\in \Q^n/\Z^n} \varphi(x) \cdot t_x^*.
    \]
    The group $\GL_n(\Z)$ acts on $\Q^n/\Z^n$ by the standard left action, and for this action, we have the equivariance property
    \[
    \gamma_* t^*_\varphi  f = t^*_{\varphi \circ \gamma^{-1}} \gamma_* f
    \]
    for any $f\in \mathcal{M}_{\G_m^n}$.
    
    We observe that if $\varphi$ is $\Gamma$-invariant, for an arithmetic subgroup $\Gamma\subset \GL_n(\Z)$, then $t_\varphi^*\Theta^{dR}(n)$ is a cohomology class for $\Gamma$; this applies to all variants of this construction we have seen as well, and to the cocycle representatives we have written down. In particular, associated to a $\Delta$-extension $E$ and an Euler class lift $\phi$, we have a cocycle representative
    \[
        t_\varphi^*\theta^{dR}_{E,\phi}(n)
    \]
    If $\varphi$ sums to zero, $t_\varphi^*$ annihilates the Euler class, so $\phi$ can be taken to be zero and omitted from the notation. Furthermore, we recall that the ambiguity in the choice of a $\Delta$-extension is only on degenerate ``wedge''-like simplices $\Delta^E(r_1,\ldots, r_n)$ with a linear dependence among the rays. For such a simplex, suppose its rays span a proper subspace $V\subset \Q^n$. Notice that the realization of the standard wedge $\Q e_1 \oplus \ldots \oplus \Q e_k \oplus \Q^+ e_{k+1}\oplus \ldots \Q^+ e_{n}$ is
    \[
        \frac{z_{k+1}\ldots z_{k}}{(z_{k+1}-1)\ldots(z_n-1)},
    \]
    which is unaffected by translation by elements in $V$, meaning its $\varphi$-weighted sum over any $V$-coset is just the total mass of $\varphi$ on that coset. By $\GL_n(\Q)$-equivariance, this implies that in general, if $\varphi$ sums to zero considered as a function on $(\Q^n/\Z^n)/V(\Q/\Z)$, then $t_\varphi^* \tilde{r}_* \Delta^E(r_1,\ldots, r_n) = 0$ as well regardless of the choice of $\Delta$-extension.

    For instance, if $\varphi$ sums to zero on \emph{every} subspace spanned by $\Gamma$-orbits of $e_1$, then $t_\varphi^*\theta^{dR,E}(n)$ is also independent of $E$ (which can thus be omitted from the notation in this case), and simply vanishes on \emph{all} linearly dependent tuples of rays arising from tuples of matrices in $\Gamma$. This phemomenon, wherein realizations of ``wedge'' classes disappear upon stabilization, will be important in our application to Shintani domains/$L$-values of totally real fields. It also arises in the following comparison:

    \begin{ex} \label{ex:bcg}
        The analytic ``cocycle multiplicatif'' of \cite{BCGV} is a homogeneous $(n-1)$-cocycle 
        \[
        \mathbf{S}_{\mathrm{mult}}^*[\varphi_f]:\Gamma^n\to \mathcal{M}_{\G_m^n},
        \]
        for the \emph{right} pullback action on the coefficients, where $\varphi\in \cS(\Q^n/\Z^n)$ is $\Gamma$-invariant for some arithmetic subgroup $\Gamma\subset \SL_n(\Z)$, and sums to zero on any subspace containing the line corresponding to the first standard basis element $e_1$. 
        
        In \cite[Proposition 8.8]{BCGV}, the value of $\mathbf{S}_{\mathrm{mult}}^*[\varphi]$ on a tuple $(\gamma_0,\ldots, \gamma_{n-1})$ is computed as
        \[
            \sum_{v\in \Q^n/\Z^n}\varphi(v)\sum_{\substack{\xi\in \Q^n/\Z^n\\ h \xi \equiv v \pmod{\Z^n}}} 
            \frac{d\ell_1\wedge \ldots \wedge d\ell_n}{(e^{2\pi i (\ell_1-\xi_1)}-1)\ldots (e^{2\pi i (\ell_n-\xi_n)}-1)}
        \]
        whenever 
        \[
        h:=\begin{pmatrix} \gamma_0^{-1}e_1 & \ldots & \gamma_{n-1}^{-1}e_1\end{pmatrix}
        \]
        has linearly independent columns and $h^* \ell_j = e_j^\vee$. We find that upon restriction of our cocycle to $\Gamma$, we have
        \begin{equation} \label{eq:bcgcomp}
            \mathbf{S}_{\mathrm{mult}}^*[\varphi](\gamma_0^{-1},\ldots, \gamma_{n-1}^{-1})= t_{\varphi_f}^* \theta^{dR}(n)(\gamma_0,\ldots, \gamma_{n-1})
        \end{equation}
        where here we use 
        \[
        \frac{1}{x-1}= \frac{x}{x-1} -1
        \]
        and then the degree-zero property of $\varphi$ to see that the contributions of the constants $-1$ cancel out. As remarked above, our cocycle representative $t_{\varphi}^* \theta^{dR}(n)(\gamma_0,\ldots, \gamma_{n-1})$ equals zero independently of a choice of $\Delta$-extension when $h$ is not full rank. Note that the presence of inverses makes sense, since our cocycle is a \emph{left} cocycle while theirs is a right cocycle. This reflects their systematic use of pullback actions rather than pushforwards; since we are working with subgroups of $\SL_n(\Z)$, the pushforward coincides with the pullback of the inverse. In fact, this comparison holds in more generality without need for such stringent degree-zero assumptions: \cite[Theorem 1.7]{BCGV} identifies $\mathbf{S}_{\mathrm{mult}}^*[\varphi_f]$ with the image under an edge map of a certain \emph{equivariant polylogarithm} class for $\G_m^n$, up to some controlled ambiguity; we prove the same for our cocycle in Appendix \ref{appendix:a}. 
    \end{ex}

    \subsubsection{$p$-power distributions} \label{section:fourier}

    We now discuss the \emph{specializations} of elements of $\mathcal{M}_{\G_m^n}(-\det)^{(0)}$ at torsion points of $\G_m^n$, and in particular specializations to distributions over all $p$-power torsion at once (which will furnish the link to $p$-adic $L$-functions). A general meromorphic function of course cannot be specialized at arbitrary torsion points, since there may be poles, so to obtain a nice formalism, we will need to stabilize.
    
    For simplicity, we will from this point on fix one prime $p$ and focus on specializations at $p$-power torsion points. Let $A_p\hookrightarrow \mathcal{M}_{\mathbb{G}_m^n}$ be the meromorphic functions on $\G_m$ which are regular on the open $p$-adic disc around $1$. We will write $A_p^{(n)}$ for the functions that $[p]_*f = p^nf$ for all integers $p\ne 0$; one checks easily that $A_p \cap \mathcal{M}_{\mathbb{G}_m^n}(-\det)^{(0)}\subset A_p^{(n)}$. 

    \begin{ex}
    As an example, for any torsion point $x$ of order $c>1$ prime to $p$ with all $n$ coordinates nonzero, 
    \[
    t_x^* \frac{z_1\ldots z_n}{(1-z_1)\ldots (1-z_n)}\in A_p.
    \]
    More generally, if $M\in M_n(\Z)$ is a matrix, and every torsion section in $M^{-1}x$ has all coordinates nonzero, then
    \[
    t_x^* M_*\frac{z_1\ldots z_n}{(1-z_1)\ldots (1-z_n)}\in A_p.
    \]
    \end{ex}

    Elements in $A_p$ can be pulled back by all $p$-power torsion by definition; moreover, we have the following formalism in terms of Schwartz functions on $\Q_p^n$:

    \begin{prop} \label{prop:specialization}
        For $f\in A_p^{(n)}$, the rule
        \[
        1((x_1,\ldots, x_n) + p^k\Z_p^n) \mapsto p^{-kn}(e^{2\pi i x_1/p^k},\ldots, e^{2\pi i x_n/p^k})^* f,
        \]
        extended linearly to all Schwartz functions, gives a $\GL_n(\Z)$-equivariant map $\kappa: A_p^{(n)}\to \D^{(n)}(\Q_p^n,\Q_p)$. Here, the $\GL_n(\Z)$ action on distributions is as in the motivic setting; i.e. $(\gamma\cdot \mu)(\varphi) := \mu(\varphi\circ \gamma)$, and superscript $(n)$ on distributions means that $\mu(\varphi\circ [p]) = p^n\mu(\varphi)$.
    \end{prop}
    \begin{proof}
        The well-definedness of the stated formula amounts to the distribution relations
        \[
        p^n x^*f = \sum_{[p]x'=x} (x')^*
        \]
        for any $p$-power torsion section $x$, which follow by the $\det^{-1}$-twisted $[p]_*$-fixedness of $f$. The $\GL_n(\Z)$-equivariance follows from the computation that
        \[
        \gamma\cdot f := (\det\,\gamma)^{-1} \cdot \gamma_* f
        \]
        is sent by $\kappa$ to
        \begin{align}
            1((x_1,\ldots, x_n)+ p^k\Z_p^n) & \mapsto (\det\,\gamma)^{-1} p^{-kn} (e^{2\pi i x_1/p^k},\ldots, e^{2\pi i x_n/p^k})^* \gamma_* f \\
            & = (\sgn\,\gamma) p^{-kn} [\gamma^{-1}(e^{2\pi i x_1/p^k},\ldots, e^{2\pi i x_n/p^k})]^* f 
        \end{align}
        which is precisely 
        \[
        (\sgn\,\gamma)
        \kappa(f)(1((x_1,\ldots, x_n)+ p^k\Z_p^n) \cdot \gamma).
        \]
        The scaling property is obvious from the formula.
    \end{proof}

    The standard inner product $\langle-,-\rangle$ on $\Q^n$ induces a choice of Fourier transform 
    \[
    \mathscr{F}: \cS(\Q_p^n, \Q_p)\to \cS(\Q_p^n,\Q_p(\mu_{p^\infty})), f\mapsto \widehat{f}(y):= \int_{\Q_p^n}f(x) \omega_p(\langle x, y\rangle)\, dh(x)
    \]
    where $\omega_p:\Q_p/\Z_p\to \mu_{p^\infty}$ is the additive character sending $1/p\mapsto e^{-2\pi i/p}$, and $h(x)$ is the Haar measure for which $h(\Z_p^n)=1$. If we write $f|_\gamma(x):=f(\gamma x)$, then the Fourier transform satisfies the equivariance property (see \cite[Appendix A]{LP})
    \[
    \widehat{f|_\gamma}(x)= |\det \gamma|\cdot \widehat{f}|_{(\gamma^{-1})^T}(x). 
    \]
    for any $\gamma\in \GL_n(\Q)$. If we extend coefficients to $\Q_p(\mu_p^{\infty})$ on the domain, the Fourier transform is an involution.

    We can similarly define a Fourier transform on distributions 
    \[
    \mathscr{F}:\D(\Q_p^n, \Q_p(\mu_{p^\infty}))\to \D(\Q_p^n, \Q_p(\mu_{p^\infty})), \mu\mapsto \widehat{\mu}(f):= \mu(\widehat{f})
    \]
    which similarly satisfies $\widehat{(\gamma \cdot \mu)}(f) = |\det \gamma|^{-1}\cdot ((\gamma^{-1})^T\cdot \widehat{\mu})(f)$, and is itself an involution. 
    
    For the remainder of the article, to keep track of the group action, we will put a hat over the $\D$ (as in $\hat{\D}$) to indicate the group action for which $\mathscr{F}:\D \to \hat{\D}$ is $\GL_n(\Q)$-equivariant. 

    \begin{rem} \label{rem:stardef}
    Restricting attention to equivariance under $\GL_n(\Z)$, if we write $\star: \GL_n(\Z)\to \GL_n(\Z)$ for the inverse transpose automorphism $\gamma\mapsto (\gamma^{-1})^T$, then the above is equivalent to saying the Fourier transform is a map 
    \[
    \D(\Q_p^n, \Q_p(\mu_{p^\infty}))\to (\star^*)\D(\Q_p^n, \Q_p(\mu_{p^\infty})),
    \]
    The $\star$ notation will be useful later in accounting for other dualities.
    \end{rem}
    
    One can compute (e.g., see \cite{Stevens}) that the Fourier transform on distributions restricts to an isomorphism
    \[
    \D^{(n)}(\Q_p^n, \Q_p(\mu_{p^\infty})) \mapsto \D^{(0)}(\Q_p^n, \Q_p(\mu_{p^\infty})).
    \]
    
    Finally, we need the following important observation, due to Katz:
    \begin{prop}
        The map    
        \[
        \widehat{\kappa}:=\mathscr{F}\circ \kappa: A_p^{(n)} \xrightarrow{\sim} \hat{\D}^{(0)}(\Q_p^n, \Q_p(\mu_{p^\infty}))
        \]
        actually maps $A_p^{(n)}$ isomorphically onto the subspace $\hat{\D}^{(0)}(\Q_p^n, \Z_p)$. 
    \end{prop}
    \begin{proof}
        Cartier duality for $\G_m^n$ precisely says that a map with this formula in fact maps $A_p$ isomorphically onto $\hat{\D}^{(0)}(\Q_p^n, \Q_p(\mu_{p^\infty}))$; see \cite[Theorem 1]{Katz}. Then take the appropriate trace-isotypic parts of both sides.
    \end{proof}

    Note that Katz's statement involves \emph{measures}, i.e. the continuous dual of continuous functions, but since locally constant functions on $\Z_p^n$ are dense in continuous ones, there is a canonical isomorphism 
    \[
        \D(\Z_p^n, \Z_p) \cong \mathrm{Measures}(\Z_p^n, \Z_p)
    \]
    given by approximating by Riemann sums; note that it is important here that we take $p$-complete coefficients. We are mostly interested in $p$-adic integration to obtain $p$-adic $L$-functions, so for convenience of exposition, we will use $p$-adic coefficients in all distributions going forward, though many of our classes (for example, ${}_c\Theta^{dR}_{\D_p}(n)$ below) are actually $\Z$-valued when considered only as distributions (i.e. against locally constant test functions).\footnote{Using the $\Z$-valued distributions would be necessary to imitate the ``multiplicative'' construction of \cite{RX2}, as mentioned in the introduction, for example.}

    \begin{rem} \label{rem:imdone}
        We also note that $\widehat{\kappa}$ satisfies an easily-computable (e.g. by the formula given in \cite[Theorem 1]{Katz}) functoriality for pullbacks $\gamma^*$ for $\gamma\in \GL_n(\Z_{(p)})$, given by
        \[
        \widehat{\kappa}(\gamma^* f)(\varphi) = \widehat{\kappa}(f)(\varphi\circ \gamma^{-1}).
        \]
    \end{rem}
    
    \subsection{Shintani generating functions and conical duality} \label{section:cones}

    We review the method of Shintani for obtaining zeta values of a totally real field $F$ of absolute degree $n$ \cite{Shin}, though we formulate things more in the style of \cite{Katz} (with some modernizations of notation); this method is how we will prove interpolation properties connecting our cocycles to $L$-values.
    
    Let $U:=\mathcal{O}_F^\times$, and let $U^+$ be the totally positive units, i.e. units positive at every real place. We will also write $F^+$, $\mathcal{O}_F^+$, etc. for analogous constructions.
    
    A \emph{Shintani decomposition} of $F\otimes \R$ is a collection $\mathscr{S}$ of relatively open\footnote{Meaning, excluding its lower-dimensional conical faces.} rational simplicial cones (of various dimensions), each bounded by linearly independent $F$-rational totally positive rays, such that the $U^+$-orbit of these cones in $\mathscr{S}$ yield a disjoint partition of $(F\otimes \R)^+$: in other words, the union of the cones in such a decomposition constitutes a fundamental (``Shintani'') domain for $U^+$ acting on this orthant. 
    
    If $I\subset F$ is any fractional ideal, viewed as a lattice inside $F\otimes \R$, we will be interested in coordinatizations 
    \[
    \alpha: F\otimes \R \cong \R^n
    \] 
    which restricts to an identification of lattices $\alpha: I\xrightarrow{\sim} \Z^n$.
    Given a cone (as above) $C$ with bounding rays generated by integral points $x_1,x_2,\ldots, x_k$, write
    \[
    R_C: \Z^n \cap \left\{c_1x_1+\ldots + c_kx_k, \forall i, c_i\in \Q, 0<c_i\le 1\right\}
    \]
    for the lattice points in the ``unit cube'' spanned by the $x_i$. We also write $z_{x}$ for the monomial $z_1^{x_1}\ldots z_n^{x_n}$, for any $x\in I\cong \Z^n$.
    
    Then if $\psi:I\to \overline{\Q}^\times$ is any finite order additive character (which we will identify via $\alpha$ with a character of $\Z^n$), identified with a character of $\Z^n$, associated to this data we have a rational ``Shintani generating function''
        \begin{equation} \label{eq:shinfunc}
        f_{\mathscr{S}}(\psi)(z_1,\ldots, z_n):= \sum_{C\in \mathscr{S}} f_C(\psi)(z_1,\ldots, z_n) := \sum_{C\in \mathscr{S}} \sum_{y\in R_C} \frac{\psi(y) z_y}{\prod_{x}  (1-\psi(x)z_x)}
        \end{equation}
    where the product in the denominator is over integral generators for the rays bounding the cone $C$. Note that here, $\psi$ can be identified with a torsion point in $\G_m^n$; namely, the section $(\zeta_1,\ldots, \zeta_n)$ if $\psi(x_1,\ldots, x_n)=\zeta_1^{x_1}\ldots \zeta_n^{x_n}$. We will also write simply $f_C$ for $f_C(1)$.
    
    The rational function $f_C(\psi)$ is a way to make sense of the naive infinite sum
    \begin{equation} \label{eq:naivegen}
        \sum_{x\in C} \psi(x) z_x
    \end{equation}
    whose meaning is otherwise ambiguous. Note that if $x\in \G_m^n[c]$ is a torsion point, then
    \[
    t_x^* f_C(\xi) = f_C(\psi_x\cdot \xi)
    \]
    for the character $\psi_x:\Z^n\to \overline{\Q}^\times$ given by $(i_1,\ldots, i_n)\mapsto (x_1^{-i_1},\ldots, x_n^{-i_n})$,
    i.e. we are implicitly identifying $\Z^n$ with the character lattice of $\G_m^n$.
    
    On these Shintani functions, we also define the differential operator sending
    \[
    D_{\mathbf{N}}: z_x \mapsto \mathbf{N}(x) z_x
    \]
    for any $x\in I$, where we write $\mathbf{N}$ for the field norm of $F$. The main result of Shintani's approach to $L$-functions is then the following:

    \begin{thm} \emph{(\cite[Prop. 1]{Shin}, formulation from \cite[Theorem 2]{Katz})} \label{thm:shin}
        One can define a meromorphic function by analytically continuing the Dirichlet series
        \[\zeta_{I}(\psi, s) := \sum_{[\lambda]\in I^+/U^+} \psi(\lambda)\mathbf{N}(\lambda)^{-s}.\]
        Then for all $k\ge 0$, 
        \[
        \zeta_I(\psi,-k)= D_{\mathbf{N}}^k(f_{\mathscr{S}}(\psi))(1,\ldots, 1)
        \]
        for any character $\psi$ nontrivial on each ray bounding any $C\in \mathscr{S}$.
    \end{thm}    
    The proof of this theorem is an analytic trick generalizing the proof the functional equation of the Riemann zeta function using an integral representation; what it actually shows is that 
    \[
    D_{\mathbf{N}}^k(f_{C}(\psi))(1,\ldots, 1)=\sum_{\lambda\in C}\psi(\lambda)\mathbf{N}(\lambda)^{-s},
    \]
    so that the fact that the union of $\mathscr{S}$ is a fundamental domain for $I^+/U^+$ implies the result. 

    We also note the following relation to $p$-adic integration of the differential operator $D_\mathbf{N}^k$, noted by Katz \cite[Theorem 1]{Katz}, which will be useful later:

    \begin{prop} \label{prop:katz}
        If $\psi:I^+/U^+\to \overline{\Q_p}^\times$ is any additive character, we identify $\psi$ with a function $\tilde{\psi}$ on $\Z_p$ via $\psi=\tilde{\psi}\circ \mathbf{N}\circ \alpha^{-1}$. Then for any $k\ge 0$, and any $f\in A_p$, we have
        \[
        \int_{I_p} \psi(t) \mathbf{N}(t)^k \, d\widehat{\kappa}(f)(t) = (D^k_{\mathbf{N}}f)(\psi)
        \]
        where we here identify $\psi$, as before, with a $p$-power torsion point of $\G_m^n$, and $I_p:= I\otimes \Z_p$.
    \end{prop}

    Note that replacing $\psi$ in all of the above by \emph{any} locally constant function in the $p$-adic topology, we obtain an analogous result by taking the associated linear combination of additive characters corresponding to the Fourier transform of that function. We will use this more general version without comment.
    
    Observe that the functions $f_C(\psi)$ lie in the image of $\Theta^{dR}(n)$ when $\psi=1$; for general $\psi$, they are linear combinations of torsion translates of functions in the image. The data of a top-dimensional rational simplicial cone in $\R^n$ is equivalent to a the data of its simplex of intersection with $S^{n-1}$, i.e. the generators of $\tilde{C}(n)$. One may naively then expect that for an acyclic simplex \emph{qua} cone $C$, our de Rham realization of $[C]$ as a spherical chain\footnote{To be strict, here we should write $[\overline{C \cap S^{n-1}}]$ since $C$ is a relatively open cone and we want a closed spherical chain; however, since the open/closed distinction does not exist within the chain complex, and the cone/simplex identification is very simple, for convenience we simply write $[C]$.} is the Shintani generating function for $C$; i.e., that $
    \widetilde{r}([C]) = f_C(1)\in \mathcal{M}_{\G_m^n}$.
    
    However, this is \emph{not} the case: rather, if $C=\Delta(r_1,\ldots, r_n)$ is an acyclic, positively oriented simplex, then 
    \[
        f_C(1) = \mathrm{adj} \begin{pmatrix} r_1 & \ldots & r_n \end{pmatrix}^T_* \frac{z_1\ldots z_n}{(1-z_1)\ldots (1-z_n)},
    \]    
    where $\mathrm{adj}\,\gamma:= |\det \gamma| \gamma^{-1}$ is the \emph{adjugate} matrix of $\gamma$, while we recall that
    \[
        \widetilde{r}([C]) = (\mathrm{adj}\, \begin{pmatrix} r_1 & \ldots & r_n \end{pmatrix}^{-1})_* \frac{z_1\ldots z_n}{(1-z_1)\ldots (1-z_n)},
    \]
    and we compute that this expression in fact equal to the Shintani generating function $f_{C^\vee}(1)$ of the \emph{dual} cone $C^\vee$, whose bounding rays are $r_1^\vee,\ldots, r_n^\vee$, defined by
    \[
    \langle r_i^\vee,r_j\rangle = \begin{cases}
        \R_+, &i=j\\
        0, &i\ne j
        \end{cases}.
    \]
    More intrinsically, the dual $C^\vee$ of \emph{any} cone $C$ can be defined as the locus pairing positively with $C$ under $\langle-,-\rangle$; for simplicial cones, one can check this coincides with the definition in terms of bounding rays.

    \begin{rem}\label{rem:conicalduality}
    This ``conical duality'' between the two constructions is observed also in the following phenomenon: as we have seen previously, the realization of a wedge, as defined in Lemma \ref{lem:stquo}, is generally nonzero under $\widetilde{r}$, while it is easy to compute that $f_C=0$ for any wedge $C$, since 
    \[
    f_{\R}(z) = f_{\R_+}(z)+f_{\R_{\le 0}}(z) = \frac{z}{1-z} + \frac{1}{1-z^{-1}} =0 ,
    \]
    cf. \cite{SH}). Conversely, by construction, $\widetilde{r}$ ignores any kind of degenerate lower-dimensional simplex, while $f_C$ makes sense and is generally nonzero for lower-dimensional cones $C$. Indeed, the roles played by ``wedge'' and ``lower-dimensional cone'' are swapped; they are precisely the conical duals to each other. For a concrete example, observe that 
    \[
    \R_+\oplus \R\subset \R^2
    \]
    has realization $z_1/(1-z_1)$, which is the same as the Shintani generating function for its conical dual $\R_+\oplus \{0\} \subset \R$. Note also that under this duality, the fundamental class $[S^{n-1}]$ corresponds to the degenerate cone $\{0\}$; the realization $\widetilde{r}$, respectively the association of the Shintani generating function, send these to the constant function $1$. 
    \end{rem}
    
    \begin{rem}
        A consequence of the previous remark is in the \emph{additivity} of $\widetilde{r}$, as opposed to the Shintani generating functions: when we have a decomposition of top-dimensional simplices/cones 
        \[
        [C_1]+[C_2] = [C_3],
        \]
        we have proven previously that $\widetilde{r}([C_1])+\widetilde{r}([C_2]) = \widetilde{r}([C_3])$. However, it is not \emph{quite} the case that 
        \[
        f_{C_1}+f_{C_2}=f_{C_3}.
        \]
        Indeed, from the ``naive'' generating function definition \eqref{eq:naivegen}, one sees that this \emph{almost} is obvious from definition, except that the terms coming from lower-dimensional cones in the intersection $\overline{C_1}\cap \overline{C}_2$ will appear on the right-hand side, but not on the left-hand side: thus, this ``obvious'' additivity needs to be corrected by the intervention of lower-dimensional conical faces. This phenomenon poses a delicate problem in the construction of ``true'' Shintani domains; see discussion further below. Meanwhile, the additivity coming from $\widetilde{r}$ is less obvious on the level of expanding out infinite monomial sums (though it can be deduced by canceling out generating functions of wedges), but holds on the nose. In the terminology of \cite{GP}, the latter is ``$N$-additivity,'' as opposed to the naive ``$M$-additivity'' needing correction from lower-dimensional faces. The duality between these types of additivity is well-known in convexity theory and linear programming; see, for example, \cite[Chapter 1, Problem 3]{Bv}. 
            \end{rem}

        The above two remarks can be viewed more structurally in the following way: let $\mathcal{K}_\Q$ be the module of functions on $\R^n$ generated by indicator functions of relatively open rational simplicial cones, and let and $\mathcal{L}_\Q$ be those generated by indicator functions of (relatively open) wedges, as in \cite{SH} and \cite{CDG}. Notice then that the Shintani generating function can be viewed as an association\footnote{This is just a restricted version of the Solomon-Hu pairing defined in \cite{SH}.}
        \[
        f_\bullet:\mathcal{K}_\Q/\mathcal{L}_\Q \to \mathcal{M}_{\G_m^n}, 1_{C}\mapsto f_{C}
        \]
        since the Shintani generating function of a wedge is zero; from this viewpoint, a Shintani decomposition $\mathscr{S}$ is simply the corresponding linear combination of indicator functions of its cones. This association satisfies the $\GL_n(\Q)$-equivariance property 
        \[
            f_{\psi \circ \gamma^{-1}} = (\mathrm{adj}\, \gamma^T)_* f_{\psi}
        \]
        for $\psi\in \mathcal{K}_\Q/\mathcal{L}_\Q$. We then have:

        \begin{prop} \label{prop:conicalduality}
        The association 
        \[
        [\Delta]\mapsto 1_{(\R_+\Delta)^\vee},
        \]
        on a positively-oriented simplex $\Delta$, defines a map 
        \[
        \delta:\widetilde{C}(n) \to \mathcal{K}_\Q/\mathcal{L}_\Q, [\Delta]\mapsto 1_{(\R_+\Delta)^\vee}
        \]
        satisfying the equivariance property
        \[
        \delta(\gamma[\Delta]) = \delta([\Delta]) \circ \gamma^T
        \]
        for $\gamma\in \GL_n(\Q)$. Furthermore, $\widetilde{r}(\bullet) = f_{\delta(\bullet)}(1)$.
        \end{prop}
        \begin{proof}
            This is a direct consequence of the convex cone duality of \cite[IV, Theorem 1.6]{Bv}, once one notes that the duals of lower-dimensional cones are precisely wedges (and identifies the ``polar'' as the negative of the dual). The $\GL_n$-equivariance property is an immediate consequence of the definition of dual cone, and the asserted equality of ``realization maps'' is then a direct consequence of the formulas.
        \end{proof}

    Now, associated to the coordinatization $\alpha$ of $F\otimes \R$ is an embedding
    \[
    \iota_\alpha: U \to \GL_n(\Z).
    \]
    We observe that the value of $\theta^{S^{n-1}}$ on a set of generators for $U^+$, when antisymmetrized, yields a ``fake fundamental domain'' so long as $e_1\in I^+$:
    \begin{prop} \label{prop:fakeshin}
        If $u_1,\ldots, u_{n-1}$ is set of generators of $U^+$, then the set of $(n-1)!$ open cones
        \[
        \R^+\Delta^\circ(v, u_{\tau(1)}v, \ldots,u_{\tau(1)}\ldots u_{\tau(n-1)} v),
        \]
        for any $v\in I^+$, as $\tau$ ranges over $S_{n-1}$, contains a unique $U^+$-translate of any $x\in (F\otimes \R)^+/\R_+^\times$ not in the orbit of their boundary faces. Equivalently, the associated cone contains a unique $U^+$ translate of any $x\in (F\otimes \R)^+$ not in a boundary orbit.
    \end{prop}
    \begin{proof}
        We take the logarithm map from the proof of Dirichlet's unit theorem
        \[
            (F\otimes \R)^+\to \mathbb{R}^{n}/\langle (t,t,\ldots, t)\rangle, x\otimes r \mapsto [(\log (r\sigma_1(x)), \log (r\sigma_2(x)), \ldots \log (r\sigma_n(x)))]
        \]
        where $\sigma_1,\ldots, \sigma_n$ are the real places of $F$, and the notation indicates we quotient by the diagonal copy of $\R$. Recall that this maps $U^+$ onto a full rank lattice inside this vector space. The statement of the proposition is then simply the standard decomposition of the unit $n$-cube for this lattice (translated by the image of $v$) into $(n-1)!$ simplices. Note that an \emph{actual} Shintani decomposition would have to contain certain lower-dimensional cones (the corresponding boundary faces) to be a true fundamental domain for $U^+$. 
    \end{proof}
    
    Fix any ray $v$ in $(F\otimes \R)^+\overset{\alpha}\subset \R^n$, and choose an ordering of basis $u_1,\ldots, u_{n-1}$ of $U^+$ such that $\Delta^\circ(v, u_{1}v, \ldots,u_{1}\ldots u_{n-1} v)$ is positively oriented with respect to the standard orientation of $\R^n$ (given by the ordered standard basis $e_1, \ldots, e_n$). Note that $\det(v, u_{1}v, \ldots,u_{1}\ldots u_{n-1} v)$ is nowhere vanishing in $(F\otimes \R)^+$; thus, by continuity, we may define:

    \begin{defn} \label{defn:fundclass}
        We define a choice of fundamental class $c_{U^+}^\alpha\in H_{n-1}(U^+,\Z)$ corresponding to an orientation such that $\Delta^\circ(v, u_{1}v, \ldots,u_{1}\ldots u_{n-1} v)$ is positively oriented for \emph{any} $v\in (F\otimes \R)^+$. (Note that the dependence on $\alpha$ is only up to sign.)
    \end{defn}

    \begin{rem} \label{rem:orthflip}
        More generally, there are $2^n$ orthants in $F\otimes \R$ corresponding to possible combinations of signs at real places of $F$; their disjoint union is the complement of the zero locus of $v\mapsto \det(v, u_{1}v, \ldots,u_{1}\ldots u_{n-1} v)$, so the orientation of $\Delta^\circ(v, u_{1}v, \ldots,u_{1}\ldots u_{n-1} v)$ is constant on each of them. This will be $+1$ on all the orthants with an even number of negative signs, and $-1$ otherwise: this is clear if one simultaneously diagonalizes $u_1,\ldots, u_{n-1}$, and notices that the matrices $\mathrm{diag}(\pm 1, \ldots, \pm 1)$ (which is orientation-preserving exactly when the number of $-1$s is even) in the resulting eigenbasis transitively permute the simplices along with their corresponding orthants.
    \end{rem}
    
    Now take a cocycle like $\theta^{S^{n-1}}(n)|_{U^+}$, but lifting to some general $\Delta(v)\in S^{n-1}(\Q)$ instead of $[\Delta(e_1)]$ from $1\in \Chains(n)_0$, representing the class $\Theta^{S^{n-1}}(n)|_{U^+}\in H^{n-1}(U^+,C(n))$. Then the cap product
    \begin{equation} \label{eq:fakeshin}
    \theta^{S^{n-1}}(n)\frown c_{U^+}^\alpha = \sum_{\sigma\in S_{n-1}} (-1)^{\sgn\,\sigma}[\Delta(v, u_{\tau(1)}v, \ldots,u_{\tau(1)}\ldots u_{\tau(n-1)} v)] \in H_0(U^+, C(n))
    \end{equation}
    is a formal sum of simplices whose $\R^+$-span is a Shintani domain, up to lower-dimensional cones.

    Previous cohomological approaches, such as \cite{Hill} or \cite{CDG}, used auxiliary data (a lexicographic order, respectively a ``Colmez'' perturbation vector) to find the lower-dimensional cones which correct the top-dimensional term corresponding to the ``fake Shintani domain'' $
    f_{C}(\xi)$, for 
    \[C=\R_+\Delta^\circ (v, u_{\tau(1)}v, \ldots,u_{\tau(n-1)}\ldots u_{\tau(1)} v),\]
    \and thus obtain totally real $L$-values; for example, \cite[Theorem 2.1]{CDG} shows that there exists a formal linear combination $\varepsilon$ of indicator functions of lower-dimensional faces of these various cones such that 
    \[
    D=\epsilon + \sum_{\tau\in S_{n-1}} 1(\R_+\Delta^\circ (v, u_{\tau(1)}v, \ldots,u_{\tau(n-1)}\ldots u_{\tau(1)} v) \in \mathcal{K}_{\R^n}
    \]
    is a ``signed Shintani domain,'' which is to say formally satisfies the requirements of Theorem \ref{thm:shin} if we view $f_\bullet$ as a function on $\mathcal{K}_\Q/\mathcal{L}_\Q$. However, \emph{our} cocycle has values which look like
    \[
    \widetilde{r}([\Delta(v, u_{\tau(1)}v, \ldots,u_{\tau(n-1)}\ldots u_{\tau(1)} v)]),
    \]
    which as we have seen, are actually \emph{not} the generating functions considered in those articles. Thus, even ignoring the issue of lower-dimensional faces, it is not immediately clear how Shintani's result Theorem \ref{thm:shin} can be used to relate our cocycle to $L$-values of totally real fields; we are a ``conical duality'' away.

    The solution lies in the following observation: the target $\mathcal{K}_\Q/\mathcal{L}_\Q$ of the duality morphism $\delta$ is very similar in nature to $\widetilde{\mathrm{C}}(n)$; as remarked previously, the difference is that the latter ``cares about'' wedges, while the former ``cares about'' lower-dimensional cones. If we could define the realization map $\widetilde{r}$ on the \emph{former} module instead of the latter, then $\widetilde{r}\circ \delta$ \emph{would} carry the class of our ``fake fundamental domain'' to the ``main'' (top-dimensional) term contributing to a Shintani function for a totally real field $F$. But $\widetilde{r}$ \emph{does} care about wedges, and ignores lower-dimensional cones, so this strategy does not quite make sense, as written.

    The work-around is that if we introduce certain stabilizations of our symbol complexes by torsion sections, then we can remove all the above obstructions: the classes of wedges and lower-dimensional cones alike are annihilated, so that $\delta$ becomes a kind of auto-duality, whereupon a realization extending $\widetilde{r}$ makes sense on the target. This stabilization will also remove the contribution of all lower-dimensional terms in associated Shintani generating functions, effectively making our fake fundamental domain as good as a real one. The resulting stabilized cocycle can then be directly related to a (stabilized) Shintani function for a totally real field $F$, and therefore to $L$-values.
    
    \subsection{Stabilized duality and $p$-adic $L$-functions} \label{section:smooth}

    We introduce stabilizations at auxiliary integers, necessary to realize the previously-stated strategy for constructing and proving interpolation properties for our $p$-adic $L$-functions. Because $\delta$ is a duality map, there will be two dual notions of stabilization, which we will call \emph{avoiding} and \emph{smoothing}. Broadly speaking, ``avoiding'' corresponds to ensuring the poles of the values of $\Theta^{dR}(n)$ avoid zero, while ``smoothing'' enforces a degree-zero condition killing off lower-dimensional conical faces.

    \begin{rem}
        The Euler factors that appear corresponding to these two types of stabilization are precisely those corresponding to the two kinds of sets of places often labelled $S$ and $T$ in literature on Stark-type conjectures; e.g., in \cite{DK}.
    \end{rem}

    \subsubsection{$c$-avoiding}
    Let $c>1$ be an integer prime to $p$; though it is not necessary, for later convenience we may as well take it to be prime. We first describe a ``$c$-avoiding'' modification of our symbol complex, as well as the resulting stabilization of $\Theta^{dR}(n)$; this latter class has the upside is that it is valued in functions holomorphic in a $p$-adic neighborhood of the identity.
    
    Write
    \[
    {}_c\widetilde{\Chains'}(n)_i=\bigoplus_{x\in \mu_{c}^n-\{1\}} {}_x\widetilde{\Chains}(n)_i
    \]
    where the definition of ${}_x\widetilde{\Chains}(n)_\bullet [-1]$ needs to be explained: roughly, it is the augmented simplicial chain complex for the ind-triangulation of $S^{n-1}$ consisting of simplices whose $< (n-1)$-dimensional faces do not pass through any rational point reducing to $x$ (considered in $(\Z/c)^n$ via the complex uniformization). We will call a triangulation of $S^{n-1}$ satisfying this property ``$x$-avoiding''.

    The \emph{existence} of $x$-avoiding triangulations is clear from the density of points reducing to any given ray in $(\Z/c)^n$ in any open set of $S^{n-1}$; this is a very basic form of weak approximation, or just the Chinese remainder theorem. The existence of a direct limit over \emph{all} such triangulations (analogous to \eqref{eq:ind}) is slightly less obvious, requiring a slight refinement in the proof of Proposition \ref{prop:refine}: we need two $x$-avoiding triangulations $T_1$ and $T_2$ to have an $x$-avoiding common refinement. In that argument, when superimposing two $x$-avoiding triangulations, all the resulting polyhedral faces of dimension $<(n-1)$ will automatically already avoid $x$ modulo $c$ by assumption. Here, however, we again use weak approximation to see that we can always pick the point of barycentric subdivision so that all the resulting new faces also avoid $x$, rather than picking an arbitrary rational point.
    
    Thus, as in \eqref{eq:ind}, we can define ${}_x\widetilde{\Chains}(n)_\bullet[-1]$ as the direct limit over \emph{all} chain complexes of $x$-avoiding triangulations, as before; it is naturally a subcomplex of $\widetilde{\Chains}(n)_\bullet$, and the inclusion is a quasi-isomorphism. 
    
    Clearly, pushforward by $\gamma$ carries $x$-avoiding triangulations to $\gamma x$-avoiding triangulations. Then the complex ${}_c\widetilde{\Chains'}(n)_i$ carries a natural $\GL_n(\Z)$ action, which permutes the summands
    \[
        {}_x\widetilde{\Chains}(n)_i \to {}_{\gamma x}\widetilde{\Chains}(n)_i
    \]
    given simply by the pushforward $\gamma_*:S^{n-1}\to S^{n-1}$ on the level of chains. Note that as plain modules, we can identify
    \[
         {}_c\widetilde{\Chains'}(n)_0 \cong \Z \{\mu_{c}^n - \{1\}\}.
    \]
    Similarly, in top degree, we have
    \[
        H_{n}({}_c\widetilde{\Chains}(n)_\bullet) \cong \Z \{\mu_{c}^n-\{1\}\}(\mathrm{sgn})
    \]
    with the same action twisted by the sign character, since there is one fundamental class of the sphere for each $x\in \mu_{c}^n$. If we write $H^0\subset \Z \{\mu_{c}^n-\{1\}\}$ for the submodule of degree-zero elements in $\Z \{\mu_{c}^n\}(\mathrm{sgn})$, we can define
    \begin{align}
        {}_c\widetilde{\Chains}(n)_\bullet := {}_c\widetilde{\Chains'}(n)_\bullet/H^0 \\
        {}_c\Chains(n)_\bullet :={}_c\widetilde{\Chains'}(n)_\bullet/H_{n}({}_c\widetilde{\Chains}(n)_\bullet),
    \end{align}
    and the latter complex is exact. The same argument in the proof of Theorem \ref{thm:gerssym} (but skipping the motivic realization to pass directly to the $d\log$ regulator) shows that there is a $\GL_n(\Z)$-equivariant realization map 
    \[
    {}_c\tilde{r}: {}_c\widetilde{\Chains}(n)_n \to \mathcal{M}_{\G_m^n}(-\det)^{(0_{/c})}
    \]
    given on acyclic simplicial generators by
    \begin{equation} \label{eq:stabilizedsum}
        [\Delta(r_1,\ldots, r_n)]_x \mapsto \frac{1}{\det M}t_x^*M_* \frac{(-1)^nz_1\ldots z_n}{(1-z_1)\ldots (1-z_n)}
    \end{equation}
    where 
    \[
        M= \begin{pmatrix} r_1& \ldots & r_n\end{pmatrix}
    \]
    and the superscript $(0_{/c})$ means that the action of $[a]_*$ for positive integers $a$ with $(a,c)=1$ depends only on the class of $a$ modulo $c$. This realization quotients down to a map 
    \[
        {}_cr: {}_c\Chains(n)_n \to \mathcal{M}_{\G_m^n}(\sgn)^{(0_{/c})}/\mathrm{constants}.
    \]
    By the $x$-avoiding condition, the image of ${}_cr$ lands in the intersection of $A_p^{(n_{/c})}$ for \emph{all} $p$ prime to $c$. Fixing some such $p$, we obtain by lifting the fixed class
    \[
    \mu_c^n - \{1\} \in {}_c\widetilde{\Chains}(n)_0
    \]
    a cocycle
    \[
    {}_c\Theta^{dR}(n)\in H^{n-1}(\GL_n(\Z), A_p^{(n_{/c})}(\sgn)/\Z_p)
    \]
    which, via Cartier duality (as discussed previously), yields a cocycle
    \[
    {}_c\Theta^{dR}_{\D_p}(n)\in H^{n-1}(\GL_n(\Z), 
    \hat{\D}(\Z_p^n, \Z_p)^{(0_{/c})}(\sgn)/\langle \delta_0\rangle)
    \]
    where $\delta_0$ is the atomic (``Dirac delta'') measure at zero. 

    \begin{rem} \label{rem:finitetofixed}
    In fact, since the lifted element $\mu_c^n - \{1\}$ is fixed by $[a]_*$ for all $[a]_*$ prime to $c$, we can project to make this valued in $[a]_*$-fixed distributions for all $a$ prime to $c$, at the cost of inverting $|(\Z/c)^\times|=\phi(c)$. When we pass to $p$-adic coefficients later, by choosing $c$ so that $(p,\phi(c))=1$, we hence can consider our cocycle as taking values in the $(0)$-part. In fact, we will only use fixedness under $[p]_*$, and one only needs to pick $c$ such that $(p, \mathrm{ord}_{(\Z/c)^\times}(p))$ for this.
    \end{rem}

    \begin{rem} \label{rem:smoothsharks}
    There is a simple relation between ${}_c\Theta^{dR}(n)$, viewed as valued in $\mathcal{M}_{\G_m^n}(\sgn)^{(0_{/c})}$, and $\Theta^{dR}(n)$: we can view both ``spherical chain complexes'' $\GL_n(\Z)$-equivariantly inside $\bigoplus_{x\in \mu_c^n} \Chains(n)_\bullet$, via the obvious inclusion for ${}_c\Chains(n)$, and inclusion at the identity section for $\Chains(n)$. The class $\Theta^{dR}(n)$ is obtained by lifting $\{1\}\in \Chains(n)_0$, while ${}_c\Theta^{dR}(n)$ is obtained by lifting $([c]^*-1)\{1\}$; then by functoriality under $[c]^*$ of the lifting process in group cohomology, we find
    \[
    (c^n[c]^*-1)\Theta^{dR}(n)={}_c\Theta^{dR}(n)
    \]
    after checking that under ${}_cr\circ [c]^* = c^n[c]^* \circ {}_cr$, which is a short calculation.\footnote{The extra scalar $c^n$ can be thought of as coming from the $\det$-twist in the contraction from $n$-forms to functions.}
    \end{rem}
    
    \subsubsection{Restriction to a nonsplit torus}

    We now restrict to $U\subset \GL_n(\Z)$, corresponding to some totally real field $F$ of degree $n$ as previously. As before, we pick a fractional ideal $I$, but instead of identifying it with $\Z^n$ as in the original Shintani method, we will fix instead an isomorphism
    \[
        \alpha: I\xrightarrow{\sim} (\Z^n)^\vee
    \]
    to the \emph{dual}, which is equivariant for $U\subset \GL_n(\Z)$ for the usual unit multiplication on the source and the standard dual representation on the target. Since $x\mapsto \langle x, -\rangle$ identifies $\Z^n$ and $(\Z^n)^\vee$ as $\GL_n(\Z)$-modules, we can also view $\alpha$ as being an isomorphism $I\to (\star^*)\Z^n$. This dualizing of the usual coordinatization is necessary to make things consistent with our treatment of conical duality. The definition of the orientation $c_{U^+}^\alpha\in H^{n-1}(U^+,\Z)$ is as in Definition \ref{defn:fundclass}, with respect to the standard orientation on $(\R^n)^\vee$; i.e., corresponding to the ordering $e_1^\vee,\ldots, e_n^\vee$.
    
    We write $\iota_\alpha: U\hookrightarrow \GL_n(\Z)$ for the inclusion; remark that any scaling $\alpha \circ [\times a]$ for $a\in F$ is $U$-equivariant for the same inclusion $\iota_\alpha$. Conversely, $\iota_\alpha$ can be recovered from the data of $\alpha$ by writing out the matrix action of $U$ on $I$, then taking the inverse transpose, so each $\iota_\alpha$ is uniquely associated to an ``isogeny class'' of coordinatizations $\alpha$.

    Then $\alpha$, treated as a map $I\to (\star^*)\Z^n$, induces a $U$-equivariant identification
    \begin{equation} \label{eq:distrocoord}
        \hat{\D}(\Q_p^n,\Z_p)\xrightarrow{\sim} \D(F_p,\Z_p), \mu\mapsto (\varphi\mapsto \mu(\varphi \circ \alpha^{-1}))
    \end{equation}
    via which we view ${}_c\Theta^{dR}_{\D_p}(n)|_{U^+}$ as living inside $H^{n-1}(U^+, \D(F_p, \Z_p)^{(0_{/c})}/\langle \delta_0\rangle))$.

    \subsubsection{Stabilizing at ideals of $F$}
    If we \emph{first} restrict to the image of $F^\times \xrightarrow{\iota_\alpha} \GL_n(\Q)$, then we may form the $F^\times$-equivariant complex ${}_\c\Chains(n)_\bullet$ for \emph{any} ideal $\c$ of $F$ (which we will choose prime to $p$), entirely analogously to the preceding construction, but only over $\c$-torsion points of $\G_m^n$. Then the cycle $\G_m^n[\c]-\{1\}\in {}_\c\Chains(n)_\bullet$ is $U$-fixed, and we have a realization map
    \[
        {}_\c r: {}_\c\Chains(n)_n \to \mathcal{M}_{\G_m^n}(\sgn)^{(0_{/\c})}/\mathrm{constants}.
    \]
    defined exactly analogously to ${}_cr$ via a sum of translates \eqref{eq:stabilizedsum}, from which we deduce classes we notate by
    \begin{equation} \label{eq:driotaalpha}
        {}_\c\Theta^{dR}(\iota_\alpha)\in H^{n-1}(U, A_p(\sgn)^{(0_{/\c})}/\Z_p), {}_\c\Theta^{dR}_{\D_p}(\iota_\alpha)=(\alpha^{-1})_*\widehat{\kappa}_* {}_\c\Theta^{dR}(\iota_\alpha)\in H^{n-1}(U, \D(F_p, \Z_p)^{(0_{/\c})}/\langle \delta_0\rangle)
    \end{equation}
    satisfying $((\mathbf{N}\c)[\c]^*-1) \Theta^{dR}(n)
    _U= {}_\c\Theta^{dR}(\iota_\alpha)$. Here, $[\c]:\G_m^n\to \G_m^n$ is the $U$-equivariant map dual to the inclusion of lattices $\c I\hookrightarrow I$ (via the identification $\alpha$).
    
    \subsubsection{$p$-adic $L$-functions} \label{section:padic}
    
    We now define ${}_\c\zeta_p^{I,\alpha}$ to be the image of ${}_\c\Theta^{dR}_{\D_p}(\iota_\alpha)$ under the composition
    \begin{equation} \label{eq:getzeta}
        H^{n-1}(U^+, \D(F_p, \Z_p)^{(0_{/c})}/\langle \delta_0\rangle)) \xrightarrow{\frown c_{U^+}^\alpha} H_0(U^+, \D(F_p, \Z_p)^{(0_{/\c})}/\langle \delta_0\rangle))\to \D(F_p/U^+, \Z_p)^{(0_{/\c})}/\langle \delta_0\rangle 
    \end{equation}
    Here, the last map is the natural map
    \[
        H_0(U^+, \D(F_p, \Z_p)^{(0_{/\c})}/\langle \delta_0\rangle)) \to H_0(U^+, \D(F_p, \Z_p)/\langle \delta_0\rangle))^{(0_{/\c})} \cong \D(F_p/U^+, \Z_p)/\langle \delta_0\rangle.
    \]
    Explicitly, if $\varphi$ is a compactly supported continuous function on $F_p/U^+$ with $\varphi(0)=0$, which we view as a function on $(\Q_p^n)^\vee$ via 
    \[
    (\Q_p^n)^\vee\xrightarrow{\alpha^{-1}}F_p\xrightarrow{\varphi} \Q_p,
    \]
    we have
    \begin{equation} \label{eq:getzeta2}
    \int_{F_p/U^+}\varphi(s)\, d{}_\c\zeta_p^{I}(s) = \int_{(\Q_p^n)^\vee} \varphi(\alpha^{-1}(t))\, d({}_\c\Theta^{dR}_{\D_p}(n)|_{U^+} \frown c_{U^+}^\alpha). 
    \end{equation}
    For $\c=(c)$ an ideal of $\Z$, these zeta elements are specializations of the ``global'' cocycle ${}_c\Theta^{dR}(n)$; otherwise, it is only a specialization of ${}_\c\Theta^{dR}(\iota_\alpha)$.

    If one replaces the identification $\alpha:I\to (\mathbb{Z}^n)^\vee$ with some scalar multiple
        \[
            \alpha \circ [t^{-1}]: (t)I \to (\mathbb{Z}^n)^\vee
        \]
    for any $t\in F^\times$, as we noted, this is equivariant for the same $\iota_{\alpha}:U \hookrightarrow \GL_n(\Z)$. As with $c^{\alpha}_{U^+}$, the dependence of the $p$-adic $L$-function on $\alpha$ is quite weak; rescaling by $t$ results in the same information up to a flip of sign:

    \begin{prop} \label{prop:signkey}
    We have the relation
        \begin{equation} \label{eq:scaleideal}
        {}_c\zeta^{I,\alpha}_p = (\sgn\, \mathbf{N}t)[t]_*{}_c\zeta^{(t)I, \alpha \circ [t^{-1}]}_p.
        \end{equation}
        for the map
        \[
            [t]_*: \D(F_p/U^+, \Z_p)^{(0_{/c})}\to \D(F_p/U^+, \Z_p)^{(0_{/\c})}, \mu\mapsto (\varphi \mapsto \mu(\varphi \circ [t])).
        \]
    \end{prop}
    \begin{proof}
        From the functoriality under $[t]_*$ of the sequence of maps \eqref{eq:getzeta}, this amounts to showing that $c_{U^+}^{\alpha\circ [t^{-1}]} = (\sgn\, \mathbf{N}t) c_{U^+}^\alpha$, which follows from Remark \ref{rem:orthflip}. 
    \end{proof}

    This provides a strong sense in which the information in our $p$-adic $L$-functions only depends on the narrow ideal class of $I$, and in fact only on the wide ideal class ``up to sign''. This will be used later in Section \ref{section:2adic} to build $p$-adic $L$-functions for ray class characters, and prove $2$-adic congruences.
    
    We have thus defined certain ``$p$-adic $L$-function'' elements, but have not justified the name; we need an interpolation theorem:

    \begin{thm} \label{thm:interp}
        Let $\psi:I^+/U^+\to \Z_p^\times$ be any locally constant function, with $\psi(0)=0$. Then we have the specialization property
        \[
        \int_{F_p/U^+} \psi(t)\mathbf{N}(t)^k\, d{}_\c\zeta_p^{I,\alpha}(t)=\zeta_{I}(\psi, -k)-\mathbf{N}\c^{k}\zeta_{\c I}(\psi, -k).
        \]
    \end{thm}

    Note that Theorem \ref{thm:distros} from the introduction is an immediate corollary of the above interpolation statements. The remainder of this subsection (until Section \ref{section:2adic}) will be dedicated to proving this interpolation theorem, by finding a duality reconciling the (stabilized) values of $\Theta^{dR}(n)$ to the (stabilized) classical Shintani method for $F$. This will be an intricate technical argument, which may be safely skipped by a reader willing to accept it on faith.

    \begin{rem} \label{rem:comparison}
        Since $U^+$ preserves the angular domain $(F\otimes \R)^+\subset \R^n$ (and all the other orthants coming from combinations of signs at places of $F$), the orientation obstruction always vanishes on $U^+$; i.e., the Euler cocycle $\varepsilon_n|_{U^+}$ is \emph{identically} zero (regardless of the $\Delta$-extension used to define our cocycle representative), and we can thus canonically lift ${}_c\Theta^{dR}(\iota_\alpha)$ over the orientation obstruction to obtain natural-seeming lifts of ${}_c\zeta_p^{I,\alpha}$ in $\D(F_p/U^+, \Z_p)$. This raises the question of whether specialization at the trivial character $\psi = 1$ of these lifts contains any interesting information, or are related to the corresponding complex $L$-values; in light of Section \ref{section:cones}, this seems tantamount to ``ignoring lower-dimensional cones'' in the Shintani method, so it would be somewhat surprising if the values were correct. We would be interested in computational verification and comparison of these values to the true $L$-values (known by Stark's conjecture).
    \end{rem}
    
    \subsubsection{$c$-smoothing}
    We now describe the ``$c$-smoothed'' modification of $\widetilde{\Chains}(n)$: in fact, this modification will set all wedge classes to zero, so it is actually rather a ``Orlik-Solomon''-flavored complex (as in Section \ref{section:steinberg}), moreso than a ``spherical chains'' flavored one. As with $c$-avoiding, after describing the case of smoothing at an integer resulting in a $\GL_n(\Z)$-cocycle, we will also indicate a generalization to smoothing at an arbitrary ideal $\c\subset F$; this yields only a $U$-cocycle when $\c$ is not a rational ideal. Again, we emphasize smoothing at integers more for expository purposes, as working with $\GL_n$-actions makes the underlying linear algebraic structure conceptually clearer.
    
    Write $\widecheck{\mu_{c}^n}$ for the Pontryagin dual of the $c$-torsion in $\G_m^n$; then we define the $c$-smoothed complex
    \[
    {}^c\OS(n)_i=\bigoplus_{\xi\in \widecheck{\mu_{c}^n}-\{1\}} {}^\xi\OS(n)_i
    \]
    where ${}^\xi\OS(n)_\bullet$ is defined as the Orlik-Solomon complex for the lines $\ell\in \P^{n-1}(\Q)=\P^{n-1}(\Z)$ for which $\xi|_{\ell \pmod c}$ is not the trivial character. Each summand is exact as a complex by the usual Orlik-Solomon formalism, so the whole complex is as well. In top degree, we will also $\St(n)$ instead of $\OS(n)_n$, continuing earlier ``Steinberg representation'' notation. We define the $\GL_n(\Z)$-action on this complex by
    \[
        {}^\xi\OS(n)_i\to {}^{ \xi\circ \gamma^{-1}}\OS(n)_i, ([\ell_1]\wedge \ldots \wedge [\ell_i])_\xi \mapsto [([\gamma\ell_1]\wedge \ldots \wedge [(\gamma\ell_i])_{\xi\circ \gamma^{-1}}.
    \]

    \begin{prop}
    There is a $\GL_n(\Z)$-equivariant realization map
    \[
    {}^cr: {}^c\St(n) \to (\mathcal{M}_{\G_m^n}(\sgn))^{(0_{/c})}
    \]
    given by 
    \begin{equation} \label{eq:smoothformula}
    ([\ell_1]\wedge \ldots \wedge [\ell_n])_\xi \mapsto \sum_{x\in \mu_c^n} \xi(x)t_x^* (\det M)^{-1} M_* \frac{(-1)^nz_1\ldots z_n}{(1-z_1)\ldots (1-z_n)}
    \end{equation}
    where $M=\begin{pmatrix}\ell_1&\ldots &\ell_n\end{pmatrix}$; here, the notation as usual means we pick any integral generator of the line (or in fact, any integral point, by the usual trace-fixedness argument).
    \end{prop}
    \begin{proof}
        We saw previously, in Section \ref{section:steinberg}, that the Orlik-Solomon relations are generated by the spherical chain relations together with the ``wedge'' relations, where the latter amount to checking that the formula \eqref{eq:smoothformula} is independent of replacing a column in $M$ with its negative: our previous trace-fixedness arguments show that the formula is independent of scaling the generators by $[a]$ for $a\in \mathbb{N}$, but not for scaling by $[-1]$.
        
        The proof that the stellar subdivision relations hold in the image of ${}^cr$ is identical to the argument for Theorem \ref{thm:thm1}, so it suffices to check latter wedge relations. Indeed, if we identify $\ell_1,\ldots, e_n$ with a set of integral generators, let
        \[
        M' := \begin{pmatrix}-\ell_1&\ldots &\ell_n\end{pmatrix}
        \]
        be the matrix for the same set of integral generators, with one sign flipped. Then we compute that the difference between the expression \eqref{eq:smoothformula} for $M$ and $M'$ is 
        \[
            \sum_{x\in \mu_c^n} \xi(x)t_x^* (\det M)^{-1} M_* \left(\frac{(-1)^nz_1\ldots z_n}{(1-z_1)\ldots (1-z_n)} + \frac{(-1)^nz_1^{-1}\ldots z_n}{(1-z_1^{-1})\ldots (1-z_n)} \right)
            \]
        \[ 
        = (\det M)^{-1} \sum_{x\in \mu_c^n} \xi(x)t_x^*  M_* \frac{(-1)^{n-1}z_2\ldots z_n}{(1-z_2)\ldots (1-z_n)}.
        \]
        If we split this sum over $x$ into cosets for the order-$c$ subgroup $(\ell_1)_* \mu_c\subset \mu_c^n$, then one computes that
        \[
            t_x^* M_* \frac{(-1)^{n-1}z_2\ldots z_n}{(1-z_2)\ldots (1-z_n)}
        \]
        is constant on each coset, and $\xi$ has sum zero over each coset since it is a nontrivial character on $(\ell_1)_* \mu_c$. The result follows.
    \end{proof}

    Inside ${}^c\OS(n)_0\cong \Z\{\widecheck{\mu_{c}^n}-\{1\}\}$, we have the $\GL_n(\Z)$-fixed element 
    \[
    -\sum_{\xi \in \widecheck{\mu_{c}^n}} \xi,\]
    which, if viewed as a function on $\mu_c^n$ and then as a formal linear combination of elements thereof in the usual way (where the values of the functions become the coefficients), corresponds to the torsion cycle $([c]^*-c^n)\{1\}$.
    
    As usual, lifting this element and taking the realization ${}^cr$ then affords us a cocycle 
    \[
        {}^c\Theta^{dR}(n)\in H^{n-1}(\GL_n(\Z),\mathcal{M}_{\G_m^n}(\sgn)^{(0_{/c})})
    \]
    which, by the same argument used in Remark \ref{rem:smoothsharks}, satisfies $(c^n[c]^*-c^n)\Theta^{dR}(n) = {}^c\Theta^{dR}(n)$.
    
    Now if we first restrict to $U\hookrightarrow{\iota_\alpha} \GL_n(\Z)$, we can likewise define a $\c$-smoothed complex with $U$-action
    \[
    {}^\c\OS(n)_i=\bigoplus_{\xi\in \widecheck{\G_m^n[\c]}-\{1\}} {}^\xi\OS(n)_i
    \]
    along with a $U$-equivariant realization map ${}^\c r: {}^\c\St(\iota_\alpha) \to (\mathcal{M}_{\G_m^n}(\sgn))^{(0_{/\c})}$. Then the $U$-fixed element 
    \[
    \sum_{\xi \in \widecheck{\G_m^n[\c]}-\{1\}} - \xi
    \]
    affords us a cocycle ${}^\c\Theta^{dR}(\iota_\alpha)\in H^{n-1}(U, (\mathcal{M}_{\G_m^n}(\sgn))^{(0_{/\c})})$ with $\mathbf{N}\c([\c]^*-1)\Theta^{dR}(n)|_U = {}^\c\Theta^{dR}(\iota_\alpha)$, generalizing the case of $\c=(c)$.

    \subsubsection{Stabilized duality}

    We now let $b$ and $c$ be distinct rational primes, which are also distinct from $p$. With identical arguments, we can combine the stabilizations of the preceding two sections to be simultaneously $b$-avoiding and $c$-smooth, resulting in an exact homological complex 
    \[
    {}_b^c\OS(n)_\bullet = \bigoplus_{\xi \in \widecheck{\mu_c^n}-\{1\}} \bigoplus_{x \in \mu_b^n-\{1\}} {}^\xi_x\OS(n)_\bullet
    \]
    where the summands consisting of symbols whose $\le(n-1)$-dimensional spans avoid $x$, and whose lines modulo $d$ are not contained in the kernel of the character $\xi$. The $\GL_n(\Z)$-invariant class 
    \[
    - \sum_{x\in \mu_b^n-\{1\}}\sum_{\xi\in \widecheck{\mu_c^n}-\{1\}} 1_{\xi, x}
    \]
    yields a class ${}_b^c\Theta^{St}(n)\in H^{n-1}(\GL_n(\Z),{}^c_b\St(n))$. The $\GL_n(\Z)$-equivariant realization map
    \[
    {}^c_br: {}^c_b\St(n)\to A_p(\sgn)^{(n_{/bc})}
    \]
    results in
    \[
    {}_b^c\Theta^{dR}(n) :={}^c_br_* {}_b^c\Theta^{St}(n)\in H^{n-1}(\GL_n(\Z), A_p(\sgn)^{(n_{/bc})})
    \]
    such that 
    \[
    (c^n[c]^*-c^n)(b^n[b]^*-1)\Theta^{dR}(n) = (c^n[c]^*-c^n){}_b\Theta^{dR}(n)= (b^n[b]^*-1) {}^c\Theta^{dR}(n)= {}^c_b\Theta^{dR}(n).
    \]
    Analogously, for $\b$ and $\c$ distinct primes of $F$ not dividing $p$, we have a class 
    \[
        {}_\b^\c\Theta^{dR}(\iota_\alpha)= {}^\c_\b r_*{}_\b^\c\Theta^{St}(\iota_\alpha) \in H^{n-1}(U, A_p(\sgn)^{(n_{/\b\c})})
    \]
    such that 
        \[
        (\mathbf{N}\c[\c]^*-\mathbf{N}\c)(\mathbf{N}b[\b]^*-1)\Theta^{dR}(n)|U = (\mathbf{N}\c[\c]^*-\mathbf{N}\c){}_\b\Theta^{dR}(\iota_\alpha)= (\mathbf{N}\b[\b]^*-1) {}^\c\Theta^{dR}(n)= {}^\c_\b\Theta^{dR}(n).
    \]
    We are now ready to define the stabilized conical duality map and prove its properties. We recall that by definition, $(\star^*){}^b_c\St(n)$ carries the action
    \[
    ([\ell_1]\wedge \ldots \wedge [\ell_n])_{\xi_x, y} \mapsto ([(\gamma^T)^{-1}\ell_1] \wedge \ldots \wedge [(\gamma^T)^{-1} \ell_n])_{\xi_x \circ \gamma^T, (\gamma^T)^{-1} y}.
    \]
    Specializing this, we also write $(\star^*){}^\b_\c\St(\iota_\alpha)$ for the $\star U$-module with the same underlying space as ${}^\b_\c\St(\iota_\alpha)$, on which $\gamma \in \star U$ acts by the action of $\star \gamma \in U$ on ${}^\b_\c\St(n)$.

    \begin{prop} \label{prop:c-conicalduality}
        There is a $\GL_n(\Z)$-equivariant map
        \[
        {}^c_b\delta: {}_b^c\St(n) \to (\star^*){}^b_c\St(n)
        \]
        given by sending 
        \[
        (([\ell_1] \wedge \ldots \wedge [\ell_n]))_{\xi_x,y} \mapsto ([\ell_1^\vee] \wedge \ldots \wedge [\ell_n^\vee])_{\xi_y,x}
        \]
        where $\xi_x$, for $x\in \mu_c^n$, is the character
        \[
        \xi_x:\mu_c^n \to \C^\times ,t\mapsto \exp(2\pi ic\langle x, t\rangle)
        \]
        where we identify $\mu_c^n \cong \frac{1}{c}\Z^n/\Z^n$ to define the inner product of $x$ and $t$, and similarly for the relationship between $y$ and $\xi_y$, replacing $c$ by $b$ everywhere. 
        
        The same formula yields a $U$-equivariant map 
        \[
        {}^\c_\b\delta: {}_\b^\c\St(\iota_\alpha) \to (\star^*){}^b_c\St(\star \iota_\alpha)
        \]
        when $\b,\c$ are ideals in $F$. 
    \end{prop}
    \begin{proof}
        The proof in the $U$-equivariant case is formally identical to the general linear case, so we treat just the latter. We by noting that if the formula is well-defined, then $\GL_n(\Z)$- (respectively $U$-)equivariance is clear. 
        
        We now observe that for each fixed factor labeled by $x,y$, ignoring the modulo $c$ and $d$ conditions, the map ${}_c\delta$ can be identified as the quotient of the duality map
        \[
        \delta: \widetilde{\Chains}(n)_n \xrightarrow{\delta} \mathcal{K}_\Q/\mathcal{L}_\Q
        \]
        by the wedge classes on the source, and their duals the indicators of lower-dimensional cones on the target. Indeed, we have already seen previously that the Steinberg module is the quotient of $\widetilde{\Chains}(n)_n$ by wedges, and the latter quotient
        \[
        \mathcal{K}_\Q/\mathcal{L}_\Q \to \St(n)
        \]
        is given on top-dimensional generators as
        \[
        1_{\R^+\Delta^\circ(r_1,\ldots, r_n)}\mapsto [\ell_1] \wedge \ldots \wedge [\ell_n].\]
        This is well-defined by the earlier discussion of ``$M$-additivity'' (along with the evident fact that there can be no relations between a top-dimensional cone indicator function and any combination of lower-dimensional ones). One then needs only to note that the ``local'' conditions with respect to the $c$-torsion point $x$ and the $d$-torsion point $y$, defining the submodules ${}^{\xi_x}_y\St(n)$, ${}^{\xi_y}_x\St(n)$, are dual to each other under $\delta$: indeed, if $[\ell_1]\wedge \ldots \wedge [\ell_n]$ avoids $x$, then $\xi_x$ is nontrivial on each of $\ell_1^\vee,\ldots, \ell_n^\vee$: the former condition says none of the $(n-1)$-hyperplanes spanned by the $r_\bullet$ pass through $x$, which is the same as saying that $x$ is not in the kernel of the any of the linear forms $\langle \ell_\bullet^\vee, -\rangle$, which is to say that $\xi_x$ is not trivial on any of the $\ell_\bullet^\vee$. The $y$ conditions are symmetric.
    \end{proof}

    We consider now the classes
    \[
    {}^c_b\delta_*{}^c_b\Theta^{St}(n)\in H^{n-1}(\GL_n(\Z), (\star^*){}^b_c\St(n)),
    {}^\c_\b\delta_*{}^\c_\b\Theta^{St}(\iota_\alpha)\in H^{n-1}(\GL_n(\Z), (\star^*){}^b_c\St(n))
    \] whose realizations under ${}^b_cr_*$ we notate
    \begin{equation} \label{eq:cdreal}
         {}^b_c\Theta^{Shin}(n) \in H^{n-1}(\GL_n(\Z),(\star^*)A_p(\sgn)^{(n_{/bc})}),\;\;\; {}^\b_\c\Theta^{Shin}(\iota_\alpha) \in H^{n-1}(U,(\star^*)A_p(\sgn)^{(n_{/\b\c})}), 
    \end{equation}
    To prove Theorem \ref{thm:interp}, we will first show that ${}^c_b\delta$ ``acts trivially'' on cohomology: we note that $\star$ induces a kind of tautological action on $\GL_n(\Z)$-cohomology
    \[
    \star^*:H^i(\GL_n(\Z),M) \to H^i(\GL_n(\Z),(\star^*)M)
    \]
    by acting on cochains as $c(\gamma_1,\ldots, \gamma_i)\mapsto (c\circ \star^i)(\gamma_1,\ldots, \gamma_i)$; the $U$-equivariant version of this statement is that we have a diagram
    \[
        \star^*:H^i(U,M) \to H^i(\star U,(\star^*)M)
    \]
    where $\star U$ means the same abstract group $U$, but embedded into $\GL_n(\Z)$ via $\star \iota_\alpha$. Then the main part of the result we need is a considerable technical digression which we defer to the appendix:
    
    \begin{prop} \label{prop:titsduality}
        The class ${}^b_c\Theta^{Shin}(n)$ is equal to $(\star^*){}^b_c\Theta^{dR}(n)$, and correspondingly, ${}^\b_\c\Theta^{Shin}(\iota_\alpha)$ is equal to $(\star^*){}^b_c\Theta^{dR}(\star \iota_\alpha)$
    \end{prop}
    \begin{proof}
        Using a duality of (restricted) Tits buildings giving rise to (restricted) Steinberg representations in their top reduced homology, we prove in Appendix \ref{appendix:b} that ${}^c_b\delta_*{}^c_b\Theta^{St}(n)$ is equal to ${}^b_c\Theta^{St}(n)$, which implies the proposition by taking realizations. The statement for $\b$ and $\c$ follows similarly from the equality of ${}^\c_\b\delta_*{}^\c_\b\Theta^{St}(\iota_\alpha)$ and ${}^\b_\c\Theta^{St}(\star \iota_\alpha)$, proved in the same way.
    \end{proof}

    It now only remains to compute the values of $(\star^*){}^\b_\c\Theta^{Shin}(\star \iota_\alpha)$ using the Shintani method. The notation in the following proposition is as in Section \ref{section:padic}. 

    \begin{prop} \label{prop:stabshin}
        Let $\b$ and $\c$ be ideals as previously. We identify $(\star^*)\widehat{\kappa}_*{}^\b_\c\Theta^{Shin}(\star \iota_\alpha)$ with a class in $H^{n-1}(U^+, \D(F_p/U^+, \Z_p)^{(0_{/\c})})$ via the map \eqref{eq:distrocoord}, which is now $\star \iota_\alpha$-equivariant (because of the extra $\star$-twist on the coefficients of $\Theta^{Shin}(n)$).
        
        Then the image of this class under the chain of arrows \eqref{eq:getzeta}, when evaluated against the function
        \[
            F_p/U^+\to \overline{\Q_p}^\times, t\mapsto  \psi(t)\mathbf{N}(t)^k
        \]
        for any locally constant function $\psi: I_p/U^+ \to \overline{\Q_p}^\times$, is
        \[
        \mathbf{N}b(\zeta_I(\psi_\b, -k) - \mathbf{N}\c\zeta_{\c I}(\psi_\b, -k))
        \]
        where $\psi_\b$ is the function $t\mapsto \psi(t) (1-1_{\b I}(t))$. 
    \end{prop}
    \begin{proof}           
        We recall the definition of $(\star^*){}^\b_\c\Theta^{Shin}(\star \iota_\alpha)$ is
        \begin{equation} \label{eq:defshin}
            (\star^*)({}^\b_{\c}r)_*({}^\c_{\b}\delta)_* {}^\c_{\b}\Theta^{St}(\star \iota_\alpha).
        \end{equation}
       Let $U(\b,\c)^+\subset U^+$ be the congruence subgroup reducing to the identity modulo $\b$ and $\c$. This is a finite-index subgroup of the rank-$(n-1)$ free abelian group $U^+$; let $c_{\b,\c}^\alpha$ be the positively-oriented fundamental class of $U(\b,\c)^+$, so that $c_{\b,\c}^\alpha$ pushes forward to $[U^+:U(\b,\c)^+]c_{U^+}^\alpha$ under the subgroup inclusion. Then we compute \eqref{eq:defshin} as
       \begin{equation}
           \frac{1}{[U^+:U(\b,\c)^+]}\mathrm{cores}^{U^+}_{U(\b,\c)^+}\, \mathrm{res}^{U^+}_{U(\b,\c)^+} (\star^*)({}^\b_{\c}r)_*({}^\c_{\b}\delta)_* {}^\c_{\b}\Theta^{St}(\star \iota_\alpha).
       \end{equation}
        We spell out the inside symbol $\Xi:=\mathrm{res}^{U^+}_{U(\b,\c)^+} (\star^*)({}^\b_{\c}r)_*({}^\c_{\b}\delta)_* {}^\c_{\b}\Theta^{St}(\star \iota_\alpha)$: on a basis $u_1,\ldots, u_{n-1}$ of $U(\b,\c)^+\xrightarrow{\iota_\alpha} \GL_n(\Z)$, we obtain  
        \[
        \Xi(u_1,\ldots, u_{n-1})= {}^\b_{\c}r_*\left(-\sum_{\xi\in \widecheck{\G_m^n[\c]}-\{1\}} \sum_{y\in \G_m^n[\b]-\{1\}} [(u_1\ldots u_{n-1} \ell_{\xi, y})^\vee, \ldots, (u_1 \ell_{\xi, y})^\vee, \ell_{\xi,y}^\vee]_{\xi, y}\right)\]
        \[
        = -\sum_{\xi\in \widecheck{\G_m^n[\c]}-\{1\}} \sum_{y\in \G_m^n[\b]-\{1\}} \sum_{\chi\in \widecheck{\G_m^n[\b]}} \chi(y) f_{C(\xi,y)}(\xi^{-1}\cdot \chi)\in A_p
        \]
        where $\ell_{\xi,y}$ are any choices of rays in $(F\otimes \Q)^+$ which modulo $\b$ and $\c$ do not contain $y$, and are not contained in $\ker \xi$, and $C(\xi,y)$ is the cone bounded by the rays $u_1\ldots u_{n-1} \ell_{\xi, y}, \ldots, \ell_{\xi,y}$. From \eqref{eq:fakeshin}, Theorem \ref{thm:shin}, and Proposition \ref{prop:katz}, we then find that the realization under $\widetilde{\kappa}$ of $\Xi$ satisfies 
        \begin{align}
            \int_{(\Z_p)^\vee} \psi(\alpha^{-1}(t)) \mathbf{N}(\alpha^{-1}(t))^k\, d\widehat{\kappa}(\Xi \frown c_{\b,\c}^\alpha)(t) & =  - \sum_{\lambda \in I_p^+/U(\b,\c)^+} \sum_{\xi\in \widecheck{\G_m^n[\c]}-\{1\}} \sum_{y\in \G_m^n[\b]-\{1\}} \sum_{\chi\in \widecheck{\G_m^n[\b]}} \chi(y) \chi(\lambda)\xi^{-1}(\lambda)\mathbf{N}\lambda^k \\
            &= [U^+:U(\b,\c)^+] \mathbf{N}\b(\zeta_I(\psi_\b, -k) - \mathbf{N}\c\zeta_{\c I}(\psi_\b, -k))
        \end{align}
        where here we use that any lower-dimensional conical face of $C(\xi, y)$ has vanishing Shintani generating function when smoothed along the character $\xi$, by the condition that $\ell_{\xi,y}\not\in \ker \xi$, as in the discussion before Example \ref{ex:bcg}. The result then follows from the commutative diagram in group (co)homology
        \begin{equation}
            \begin{tikzcd}
            H^{n-1}(U(\b,\c)^+, \D(F_p, \Z_p)^{(0_{/c})}/\langle \delta_0\rangle)) \arrow[d,"\mathrm{cores}"] \arrow[r,"\frown c_{\b,\c}^\alpha"] & H_0(U(\b,\c)^+, \D(F_p, \Z_p)^{(0_{/\c})}/\langle \delta_0\rangle)) \arrow[r] \arrow[d,"\mathrm{cores}"] & \D(F_p/U(\b,\c)^+, \Z_p)^{(0_{/\c})}/\langle \delta_0\rangle) \arrow[d,"\mathrm{projection}"] \\
            H^{n-1}(U^+, \D(F_p, \Z_p)^{(0_{/c})}/\langle \delta_0\rangle)) \arrow[r,"\frown c_{U^+}^\alpha"] & H_0(U^+, \D(F_p, \Z_p)^{(0_{/\c})}/\langle \delta_0\rangle)) \arrow[r] & \D(F_p/U^+, \Z_p)^{(0_{/\c})}/\langle \delta_0\rangle)
            \end{tikzcd}
        \end{equation}
        
    \end{proof}
    
    \begin{proof}[Proof of Theorem \ref{thm:interp}]
        We have, for any ideal $\b$ in $F$ prime to $p$ and $(c)$, that
        \[
        \mathbf{N}\b([b]^*-1){}_\c\Theta^{dR}(\iota_\alpha)={}^\b_\c\Theta^{dR}(\iota_\alpha) = (\star^*){}^\b_\c\Theta^{Shin}(\star \iota_\alpha).
        \]
        It is immediate from the preceding proposition that the value of this cocycle at $u_1,\ldots, u_{n-1}$, after applying \eqref{eq:getzeta}, satisfies
        \begin{equation} \label{eq:bstabilized}
            \int_{(\Z_p^n)^\vee} \psi(\alpha^{-1}(t)) \mathbf{N}(\alpha^{-1}(t))^k \, d\widehat{\kappa}({}^\b_\c\Theta^{dR}(\iota_\alpha)\frown c_{U^+}^\alpha)(t) = \mathbf{N}\b(\zeta_{I}(\psi_\b, -k)-\mathbf{N}c\cdot \zeta_{\c I}(\psi_\b,-k))
        \end{equation}
        for any locally constant $\psi$ as previously. Thus by the functoriality noted in Remark \ref{rem:imdone}, the distribution 
        \[
        \mu_\alpha:=\widehat{\kappa}_*({}_\c\Theta^{dR}(\iota_\alpha)\frown c_{U^+}^\alpha) \in \D(F_p/U^+, \overline{\Q_p})^{(0_{/c})}
        \]
        satisfies 
        \[
        \mu_\alpha((\psi \cdot \mathbf{N}^k)\circ [\b]) - \mu_\alpha(\psi \cdot \mathbf{N}^k) =\zeta_{I}(\psi_\b, -k)-\mathbf{N}c \cdot \zeta_{\c I}(\psi_\b,-k)
        \]
        for any $\mathfrak{b}$ prime to $p$ and $\mathfrak{c}$,\footnote{Here, the inverse from the remark disappears because of the $\star$-twist coming from $\alpha: I\to (\Z^n)^\vee$.} any locally constant $\psi$, and any integer $k\ge 0$.
        
        We wish to prove that this implies $\mu_\alpha(\psi) =\zeta_I(\psi, -k) - \mathbf{N}c \cdot \zeta_{\c I}(\psi, -k)$ for any $\varphi: I_p/U^+\to \overline{\Q_p}$. Indeed, any Schwartz function on $I_p$ differs by a combination of indicator functions $1_{p^kI_p}$ (for $k\in \Z$) from a linear combination of differences of indicator functions of the form $1_{a_1+p^kI_p} - 1_{a_2+p^kI_p}$ for $a_1,a_2\in p^{k-1}I_p/p^kI_p- \{0\}$. But the various $[\b]$ transitively permute the indicator functions $1_{x+p^kI_p}$ as $x$ ranges over $p^{k-1}I_p/p^kI_p- \{0\}$, so for any combination of such functions $1_{a_1+p^kI_p} - 1_{a_2+p^kI_p}$, the interpolation property holds. In fact, such differences generate the space of locally constant functions for which $\psi(0)=0$, so by continuity we are done.
    \end{proof}

    As noted previously, Theorem \ref{thm:distros} from the introduction is an immediate corollary. 
    
    \begin{rem}
        We note that working with the $\GL_n(\Z)$-version of the duality statement was not logically necessary for the proof of Theorem \ref{thm:interp}: only the $U$-restricted duality, with the more general stabilizations at ideals $\b,\c\subset F$, is strictly required. However, we find the duality statement (and the geometry behind its proof in Appendix \ref{appendix:b}) much clearer to understand from the $\GL_n(\Z)$-equivariant perspective, because of the essential intervention of its involution $\star$ (which simply ``switches the embedding'' when acting on $U$, in a potentially confusing way if one is not keeping track of the ambient group). Thus, we emphasized its role in the exposition.
    \end{rem}

    \begin{rem} \label{rem:grossstark}
        A variant of the computation of Proposition \ref{prop:stabshin} can also be used to show that ${}^\b_\c\Theta^{Shin}(\iota_\alpha)$ is also a linear combination of $\b$-torsion translates of the cocycle of \cite{CDG}, specialized to constant polynomials and restricted to $U^+$. Just as with the cocycle of \cite{CDG}, it then should result from \cite{DKSW} and \cite{DH} that there is
        \begin{equation} \label{eq:starkint}
        \text{some linear combination of integrals like }\int_{\Z_p^n-p\Z_p^n} \log_p(\alpha^{-1}(t))\, d({}_c\Theta^{dR}(\iota_\alpha) \frown c_{U^+}^\alpha) = \log_p(u_\psi) 
        \end{equation}
        where $u_\psi$ is a Gross-Stark unit for the extension of associated to $\psi$ a narrow Hilbert class character, and the linear combination will be a sum over various coordinatizations $\alpha$ corresponding to different ideal classes $[I]$, with coefficients corresponding to $\psi$, as in the below Section \ref{section:2adic}). (By adding tame level, one should also obtain Gross-Stark units in abelian extensions of $F$ ramified at finite places.) Note that the integral is well-defined as a $p$-adic number, not just up to $U^+$-multiplication, because our cocycle is invariant under $[p]_*$, meaning that $\Z_p^n-p\Z_p^n$ has mass zero and thus the integrals of constants vanish (see Remark \ref{rem:finitetofixed}). It may be of interest to explore if our formulation of cocycles has any benefits in understanding this approach to explicit class field theory, but we do not pursue this further here. It is worth noting that we do spell out this relationship in the joint work \cite{RX2}, in a very related setting.
    \end{rem}

    \subsection{Extra $2$-adic congruences of Deligne-Ribet} \label{section:2adic}

    As an application of our formalism, we recover the exceptional $2$-adic congruences for totally odd functions proven in \cite[Theorems 8.11, 8.12]{DR}, though as before with a $\psi(0)=0$ assumption. Along the way, we discuss the interpolation property for our elements ${}_\c\zeta_p^{I,\alpha}$ when $\mathfrak{c}$ is nontrivial in the narrow class group, packaging them into $p$-adic $L$-functions for narrow Hilbert class characters.

    To put ourselves in the appropriate framework, we recall the formalism of $L$-functions associated to the narrow Hilbert class group of $F$: following \cite{Katz}, given an integral ideal $\mathfrak{N}\subset F$, we write $M_{\mathfrak{N}}$ for the monoid (under multiplication) of integral ideals of $\mathcal{F}$, modulo the equivalence relation $I\sim J$ whenever $IJ^{-1}$ is generated by a totally positive element in $1+ \mathfrak{N} J^{-1}$. We consider the inverse limit
    \[
        M_{p^\infty} := \varprojlim_k M_{p^k}.
    \]
    As noted previously, we will omit consideration of the more general construction with tame level $M_{\mathfrak{N}(p^\infty)}$ considered in \cite{Katz}; however, the same results should follow from our arguments below.
    
    Let $\mathrm{Cl}^+(F)$ denote the narrow Hilbert class group of $F$, of size $h\in \mathbb{N}$, and choose integral representatives $I_1,\ldots, I_h$ for its classes. We have the identification
    \[
        f:\bigsqcup_{i=1}^h \left(\varprojlim I_i/p^k I_i\right)/U^+  \xrightarrow{\sim} M_{p^\infty} ,  (x_k)_k \mapsto ((x_k)\cdot I_i^{-1})_k,\,\mathrm{for}\,(x_k)_k\in \varprojlim I_i/p^k I_i
    \]
    from loc. cit. The norm function on ideals extends to a continuous map $\mathbf{N}:M_{p^\infty}\to \Z_p$, and we have the relation
    \[
    (\mathbf{N}\circ f)(x)= \mathbf{N}I_i^{-1} \cdot \mathbf{N}x
    \]
    for $x\in I_i\otimes \Z_p$. Given a locally constant function $\psi: M_{p^\infty}\to \overline{\Q_p}$, we have the associated complex $L$-function analytically continuing the Dirichlet series
    \[
        \zeta_F(\psi,s) := \sum_{J} \frac{\psi([J])}{(\mathbf{N}J)^{s}}
    \]
    where the sum ranges over all integral ideals of $F$. From $\psi$, for each class $[I_i]$, we also get an associated locally constant function
    \[
    \psi_i:  (I_i)_p \to \overline{\Q_p}, t\mapsto \psi(f(t)).
    \]
    for which we can form $\zeta_{I_i}(\psi_i, s)$ as previously; then we have, from the identification coefficient-wise of the associated Dirichlet series, that
    \[
        \zeta_F(\psi,s) := \sum_{i=1}^h (\mathbf{N}I_i)^s \zeta_{I_i}(\psi_i, s).
    \]
    Accordingly, we define $p$-adic $L$-elements
    \[
        \D(M_{p^\infty}, \Z_p)/\langle \delta_{0_i}, 1\le i \le h\rangle \ni {}_\c\zeta_p^F := \sum_{i=1}^h f_*{}_\c\zeta_p^{I,\alpha},
    \]
    where $\delta_{0_i}$ is the atomic measure at $0_i$, the image of $0$ in the coordinate corresponding to $I_i$ under $f$. the We deduce the interpolation property, for any $\psi:M_{p^\infty}$ such that $\psi(0_i)=0$ for $i=1,\ldots, h$ (i.e. $\psi_i(0)=0$),
    \begin{align} \label{eq:hilbinterp}
        \int_{M_{p^\infty}} \psi(J) \mathbf{N}J^k\,d{}_\c\zeta_p^F(J) & = \sum_{i=1}^h \mathbf{N}I_i^{-k}\int_{F_p/U^+} \psi_i(t) \mathbf{N}t^k\,d{}_\c\zeta_p^F(t)\\
        & = \sum_{i=1}^h \mathbf{N}I_i^{-k} (\zeta_{I_i}(\psi_i, -k) - \mathbf{N}\c \zeta_{\c I_i}(\psi_i, -k)) \\
        & = \zeta_F(\psi, -k) - \mathbf{N}\c^{k+1} \zeta_F(\psi_{\c},-k) = (1-\psi(\c)\mathbf{N}\c^{1+k})\zeta_F(\psi, -k)
    \end{align}
    where $\psi_{\c}:M_{p^\infty} \to \overline{\Q_p}$ is the function $J\mapsto \psi(J\c)$. 

    Recall the notion of even/odd functions on $M_{p^\infty}$ from \cite[2.24]{DR}: let $G_{p^k}$ be the $p^k$-ray class group of $F$, and $G_{p^\infty}:=\varprojlim G_{p^k}$; it acts on $M_{p^\infty}$ by level-wise multiplication. Write $\Sigma$ for the subgroup generated by towers of principal ideals generated by elements $1\pmod{p^k}$. Writing $S_\infty$ for the set of infinite places of $F$, we have a canonical isomorphism $\Sigma \cong \{\pm 1\}^{S_\infty}$, the generator at the place $\nu$ given by a tower of principal ideals which are negative at precisely $\nu$ and positive at all the other places. We also have a norm map $\mathbf{N}:\Sigma\to \{\pm 1\}$ by taking the product over all places.
    
    We say the a function $\varphi:M_{p^\infty} \to \overline{\Q_p}$ has \emph{sign} $\sgn(\varphi)\in \{\pm 1\}^{S_\infty}$ if $\varphi \circ \sigma = \sgn(\varphi)\cdot \varphi$ for all $\sigma\in \Sigma$; if $\sgn(\varphi)$ consists of all $-1$s, we call $\varphi$ totally odd, and if $\sgn(\varphi)$ is all $1$s, we call it totally even.

    We now are ready to prove the analogue of \cite[(8.11--12)]{DR}:

    \begin{thm}[Theorem \ref{thm:2adic}] \label{thm:2adicbody}
        Let $\psi:M_{p^\infty} \to \overline{\Q_p}$ be a totally odd continuous function with $\psi(0_i)=0$ for $1\le i \le h$. Then 
        \[
        \int_{M_{(2^\infty)}} \psi(J)\, d{}_\c\zeta_2^F(J) \equiv 0 \pmod{2^n}.
        \]
        (Note that from our assumption on $\psi$, the invariant $\delta(\psi)$ of \cite[(8.11)]{DR} is always zero.)
    \end{thm}
    \begin{proof}
        By continuity, we may assume $\psi$ is locally constant; thus,  we can represent the elements in $\Sigma$ by actual principal ideals at some fixed level $2^k$ for some sufficiently large integer $k$, and we may consider integrals of totally odd functions $\psi$ on $M_{(2^k)}$ against the image of ${}_\c\zeta_2$ considered in $\D(M_{(2^k)}, \Z_2)/\langle\delta_{0,\bullet}\rangle$. From \cite[\S2]{DR}, the space $M_{(2^k)}$ in fact is a finite disjoint union of ray class groups, via the isomorphism
        \[
            \mathcal{D}:\bigsqcup_{\mathfrak{d} | (2^k)} G_{(2^k)\mathfrak{d}^{-1}} \xrightarrow{\sim} M_{2^k} , [J]_{(2^k)\mathfrak{d}^{-1}}\mapsto [\mathfrak{d} \cdot J]_{(2^k)}
        \]
        which affords an $\Sigma$-equivariant identification
        \[
            \mathcal{D}_*: \bigoplus_{\mathfrak{d} | (2^k), \mathfrak{d}\ne (2^k)} \mathbf{D}(G_{(2^k)\mathfrak{d}^{-1}}, \Z_2)\xrightarrow{\sim} \D(M_{(2^k)}, \Z_p)/\langle\delta_{0,\bullet}\rangle
        \]
        We have the relation
        \[
            \int_{M_{(2^k)}} \psi\, d{}_\c\zeta_2 =  \cdot \int_{G_{(2^k)\d^{-1}}} \mathcal{D}^*\psi\, d(\mathcal{D}_*)^{-1}{}_\c\zeta_2 
        \]
        for any function $\psi$ supported on $\mathcal{D}(G_{(2^k)\d^{-1}})\subset M_{(2^k)}$. Note that if $\psi$ is totally odd, then $\mathcal{D}^*\psi$ is totally odd on $G_{(2^{k})\d^{-1}}$. Thus, it suffices to prove the theorem for integrating totally odd ray class characters against $\mathcal{D}_*^{-1} {}_\c \zeta_2 \in \D(G_{(2^k)\d^{-1}}, \Z_2)$. This can be checked at any higher conductor ray class group, so without loss of generality, we consider $k$ sufficiently large so that $\Sigma$ acts freely on $G_{(2^k)\d^{-1}}$.

        By Proposition \ref{prop:signkey}, for $\sigma\in \Sigma \cong \{\pm 1\}^{S_\infty}$, we have $[\sigma]_* {}_\c\zeta_2 = \mathbf{N}\sigma \cdot {}_\c\zeta_2$. This means ${}_\c\zeta_2$ lies in the invariant submodule $(\D(G_{2^k\d^{-1}}, \Z_2)(\mathbf{N}))^{\Sigma}$, since multiplication by a generator (in $G_{2^k\d^{-1}}$) of $\sigma$ is orientation preserving exactly when $\mathbf{N}\sigma =+1$ by definition; the same is then true for $\mathcal{D}_*^{-1} {}_\c \zeta_2$ by $\Sigma$-equivariance of $\mathcal{D}$. We then have the exact sequence in Tate cohomology
        \[
        \D(G_{2^k\d^{-1}}, \Z_2)(\mathbf{N}) \xrightarrow{\mathrm{N}_\Sigma} (\D(G_{2^k\d^{-1}}, \Z_2)(\mathbf{N}))^\Sigma \to H^0_T(\Sigma, \D(G_{2^k\d^{-1}}, \Z_2)(\mathbf{N}) )=0.
        \]
        Here, 
        \[
        \mathrm{N}_\Sigma=\sum_{\sigma \in \Sigma} \sigma
        \]
        is the norm map for $\Sigma$, and the vanishing is by Shapiro's lemma for Tate cohomology, since $\Sigma$ acts freely on $\D(G_{2^k\d^{-1}}, \Z_2)(\mathbf{N})$. We thus have proven that the image of ${}_\c\zeta_2$ in $\D(G_{2^k\d^{-1}}, \Z_2)(\mathbf{N})$ is divisible by $\sum_{\sigma \in \Sigma} \sigma$, and the resulting $2$-adic divisibility for pairing it against totally odd $\psi$ then follows by \cite[Lemma 5.3]{Gro}.
    \end{proof}

    \begin{rem}
    Since our functions do not even interpolate the $L$-values of totally odd Hilbert class characters at the $s=0$ specialization (i.e. the $\psi(0)=0$ excluded above), it seems unlikely we can prove these divisibilities without some new idea. It would be of interest to see whether the cocycle of \cite{CDG}, using ``true'' Shintani decompositions and no smoothing, could recover this extra data. Also interesting to note is that in the original approach of \cite[(8.6--10)]{DR}, it is also precisely these cases (in the notation there, $\Delta_c(0, \varepsilon)$ for $\varepsilon$ totally odd) which require special extra attention, due to technicalities in the definition of weight-$1$ Hilbert-Eisenstein series.
    \end{rem}
    
    \appendix

    \section{Comparison with a cyclotomic equivariant polylogarithm} \label{appendix:a}

    In this appendix, working over the complex numbers, we argue that our de Rham \emph{left} cohomology class 
    \[
        \Theta^{dR}(n)\in H^{n-1}(\SL_n(\Z),\mathcal{M}_{\G_m^n}/\langle d\log z_1 \wedge \ldots d\log z_n\rangle)
    \]
    coincides with the \emph{right} cohomology class 
    \[
    S_{mult}[\{1\}]\in H^{n-1}(\SL_n(\Z),\mathcal{M}_{\G_m^n})
    \]
    of \cite[Théorème 1.7]{BCGV}, under the anti-involution $\gamma\mapsto \gamma^{-1}$ of $\SL_n(\Z)$. This latter class is only well-defined up to the ambiguity $H^{n-1}(\SL_n(\Z),\C)$, so what we really mean by this is that $S_{mult}[\{1\}]$ coincides with the $H^{n-1}(\SL_n(\Z),\C)$-torsor of lifts of $\Theta^{dR}(n)$ defined from transgressions of the Euler class.
    
    To briefly recall the definition of $S_{mult}[\{1\}]$, it is obtained from an equivariant (``polylogarithm'') cohomology class 
    \[
        \cZ_n \in H^{2n-1}_\Gamma(\G_m^n-\{1\}, \C)^{(0)}
    \]
    which is characterized (up to ambiguity $H^{2n-1}_\Gamma(\G_m^n, \C)^{(0)}$) by having image a positive generator of $\{1\}\in H^{2n}_\Gamma(\{1\})$ under the residue map, along with its trace-invariance. Then if $H\subset \G_m$ is a suitable set of hyperplanes through zero, the image of $\cZ_n$ under the edge map
    \[
        H^{2n-1}_\Gamma(\G_m^n-H, \C)^{(0)} \to H^{n-1}(\Gamma, H^{n}(\G_m^n-H)^{(0)})
    \]
    together with a ``formality'' isomorphism
    \[
        H^{n}(\G_m^n-H)^{(0)} \cong (\Omega^n_{\G_m^n-H})^{(0)}
    \]
    yields $S_{mult}[{0}]$, up to the earlier-specified ambiguity.\footnote{Using instead the approach of Kings-Sprang \cite{KS} and starting with a polylogarithm class in \emph{coherent} cohomology instead, the formality isomorphism can be avoided.}

    Let 
        \[
        \mathcal{D}^0\to \ldots \to \ldots \mathcal{D}^{2n}
        \]
    be the distributional de Rham complex computing the de Rham cohomology of $\G_m^n$; then the equivariant cohomology can be computed by the double complex of inhomogeneous cochains $C^\bullet(\SL_n(\Z), \mathcal{D}^\bullet)$; see \cite{RX2} or the author's thesis \cite{X}. 

    Fix a $\Delta$-extension $E$ and an Euler transgression $\phi\in C^{n-1}(\SL_n(\Z), \C)$; then one computes by the Poincaré-Lelong formula that the map
    \[
        [\Delta^E(r_1,\ldots, r_k)]\mapsto \begin{pmatrix}r_1 & \ldots & r_k\end{pmatrix}_* (d\log)^{\wedge k} \{1-z_1,\ldots, 1-z_k\}
    \]
    is a $\SL_n(\Z)$-equivariant map $\widetilde{\Chains}(n)_{\bullet}\to \mathcal{D}^{2n-\bullet}$. Then the sum of the lifts from Lemma \ref{lem:groupcohom}
    \[
        \ell_1+\ell_2+\ldots + \ell_n-\phi\cdot (d\log)^{\wedge n} \{-z_1,\ldots, -z_k\}
    \]
    is a trace-fixed class which has total coboundary $\{1\}$, and hence represents $\cZ_n$. Its image under the Hochschild-Serre edge map is $(d\log)^{\wedge k}\theta_{E,\phi}(n)$, valued in trace-invariant holomorphic forms, and which therefore is identified with $S_{mult}[\{1\}]$ under the formality isomorphism up to the specified ambiguity.

    By the same argument, using a stabilized equivariant class ${}_c\cZ_n$, the class ${}_c\Theta^{dR}(n)$ coincides with $S_{mult}[\mu_{c}^n-\{1\}]$, where now the hyperplanes $H$ can be taken to avoid lines through $\mu_{p^\infty}^n$ (and thus the resulting cohomology class extends holomorphically over all $p$-power torsion). (If one does this with $c$-smoothing rather than $c$-avoiding, one also can eliminate the ambiguity coming from the orientation obstruction.)

\section{Duality in restricted Tits buildings} \label{appendix:b}

In this appendix, we prove that ${}^c_b\delta_*{}^c_b\Theta^{St}(n)$ and $(\star^*){}^b_c\Theta^{St}(n)$ yield the same class in $H^{n-1}(\GL_n(\Z), (\star^*){}^b_c\St(n))$. To give the idea of the argument, we first treat the unstabilized version: observe that the unstabilized duality map $\delta$ of Proposition \ref{prop:conicalduality} descends to a $\GL_n(\Q)$-equivariant map
\[
\delta: \St(n)\to (\star^*)\St(n), [\ell_1]\wedge \ldots \wedge [\ell_n] \to [\ell_1^\vee]\wedge \ldots \wedge [\ell_n^\vee]
\]
by the same argument in Proposition \ref{prop:c-conicalduality}. It is indeed also the case that\footnote{See following footnote.} 
\begin{equation}  \label{eq:untwistedduality}
\delta_* \Theta^{St}(n) = - (\star^*) \Theta^{St}(n) \in H^{n-1}(\GL_n(\Q),(\star^*)\St(n)).
\end{equation}
To see this, let $T_n$ be the \emph{Tits building} for $\GL_n(\Q)$: the $(n-2)$-dimensional simplicial complex whose $i$-simplices are proper flags of length $i$ in $\Q^n$, i.e. chains of proper inclusions of proper nonzero subspaces of $\Q^n$
\[
V_1\subset \ldots \subset V_i
\]
The Solomon-Tits theorem says that $T_n$ has the homotopy type of a wedge of $(n-2)$-spheres. The group $\GL_n(\Q)$ acts on $T_n$ via its standard action on $\Q^n$, and the only nontrivial reduced homology group $\widetilde{H}_{n-2}(T_n)$ is then a model for the Steinberg representation $\St(n)$.

To describe the identification
\[ \St(n) \xrightarrow{\sim} \widetilde{H}_{n-2}(T_n),\]
we note that given the standard set on $[n]$ elements $\{1,2,\ldots, n\}$, we can form the standard $(n-1)$-simplex $\Delta^{n-1}$ whose $i$-faces correspond to the $(i+1)$-subsets of $[n]$, and inclusion of sets corresponds to the face relation. Then its first barycentric subdivision $\mathrm{sd}\,\Delta^{n-1}$ is also a contractible $(n-1)$-simplicial complex whose $i$-simplices are flags of non-empty subsets (i.e. chains of subsets of $[n]$ under proper inclusion).Then, as shown in \cite{AR}, the generator $[\ell_1]\wedge \ldots \wedge [\ell_n]$ (in our previous Orlik-Solomon-based notation) is identified with the image of the fundamental class (for the orientation corresponding to the usual order on $[n]$) under the simplex map
\begin{equation} \label{eq:simplexmap}
 \partial \mathrm{sd}\,\Delta^{n-1} \hookrightarrow T_n
\end{equation}
corresponding to the set $\ell_1,\ldots, \ell_n$, where we identify for example the flag of subsets $[1] \subset [1,3] \subset [1,3,4]$ with the flag of subspaces $\langle \ell_1\rangle \subset \langle \ell_1, \ell_3\rangle \subset \langle \ell_1, \ell_3, \ell_4 \rangle$.

Then the point is that using the lifting process in Lemma \ref{lem:groupcohom}, one straightforwardly computes that the class $\theta^{St}(n)$ arises by lifting the $\GL_n(\Q)$-fixed element $1\in \Z$ along the map $H^0(\GL_n(\Q),\Z)\to H^{n-1}(\GL_n(\Q), \widetilde{H}_{n-2}(T_n))$ coming from the exact doubly-augmented homology complex
\begin{equation} \label{eq:augtits}
C(T_n)_\bullet:=\widetilde{H}_{n-2}(T_n)\to C_{n-2}(T_n)\to \ldots \to C_{0}(T_n) \to \Z,
\end{equation}
in the same way as we previously did with the Orlik-Solomon complex. 

The Tits building admits an automorphism $\delta_T:T_n\to T_n$ sending a vertex corresponding to a subspace $V\subset \Q^n$ to its orthogonal complement $V^\perp$ under the standard inner product, and similarly on the simplices corresponding to flags (where $\perp$ reverses the inclusion chains). Further, one checks that this automorphism sends the top-degree homology class corresponding to the symbol $[\ell_1]\wedge \ldots \wedge [\ell_n]$ to $- [\ell_1^\vee] \wedge \ldots \wedge [\ell_n^\vee]$; hence, it induces the automorphism $\delta$ on $\St(n)$.

As with the duality map $\delta$, the map $\delta_T$ is not $\GL_n(\Q)$-equivariant, but it \emph{is} $\GL_n(\Q)$-equivariant for the $\star$-twisted action on the target, so induces an equivariant automorphism of homology complexes
\[
(\delta_T)_*: C(T_n)_\bullet \xrightarrow{\sim} (\star^*)C(T_n)_\bullet
\]
which evidently fixes the class of $1\in \Z$, and hence the corresponding derived classes in top homology. From this, the duality relation \eqref{eq:untwistedduality} follows immediately.\footnote{If we take de Rham realizations of $\St(n)$ directly using this duality, we obtain that $(\star^*)\Theta^{dR}(n)$ agrees with 
\[
(\gamma_1,\ldots, \gamma_{n-1})\mapsto \begin{pmatrix} e & \gamma_1 e & \ldots & \gamma_1 \ldots \gamma_{n-1}e\end{pmatrix} \frac{(-1)^nz_1\ldots z_n}{(1-z_1)\ldots (1-z_n)}
\]
if taken to be valued in $\mathcal{M}_{\G_m^n}(-\det)^{(0)}$ modulo the $\GL_n(\Q)$-orbit of classes of the form
\[
(d\log)^{\wedge n} \{-z_1, \ldots, -z_i, 1-z_{i+1},\ldots, 1-z_n\},
\]
i.e. the image of all wedges (or, dually and equivalently, the Shintani generating functions of all lower-dimensional cones). The latter class is precisely the ``naive Shintani cocycle'' which \cite{LP} proves is a cocycle modulo precisely these same relations.}

To attack the stabilized case, let $(b)=\b \cap \Z$, $(c)=\c \cap \Z$, and fix a $c$-torsion section $x$ and a $b$-torsion section $y$ such that neither lie on the line spanned by the reduction of the first basis vector $e_1$; let $\Gamma\subset \GL_n(\Z)$ (or $\Gamma\subset U$, for that respesctive case) be the congruence subgroup fixing $\xi_x$ and $y$, so that $\star\Gamma\subset \GL_n(\Z)$ (respectively, $\subset \star U$) is the subgroup fixing $x$ and $\xi_y$. Then write ${}^{\xi_x}_y\theta^{St}(n)$ for the $\Gamma$-cocycle
\[
    (\gamma_1,\ldots, \gamma_{n-1})\mapsto [\gamma_1 \ldots \gamma_{n-1} e_1] \wedge \ldots \wedge [\gamma_1 e_1]\wedge [e_1]\in {}^{\xi_x}_{y}\St(n)
\]
ad ${}^{\xi_x}_y\Theta^{St}(n)$ for its corresponding class in $H^{n-1}(\Gamma, {}^{\xi_x}_{y}\St(n))$; switching the roles of $c$ and $b$, we can analogously produce 
\[
[{}^{\xi_y}_x\theta^{St}(n)]={}^{\xi_y}_x\Theta^{St}(n)\in H^{n-1}(\star\Gamma, {}^{\xi_y}_{x}\St(n)).
\]
Then Shapiro's lemma reduces Proposition \ref{prop:titsduality} to proving that\footnote{The appearance of the extra sign here, as well as in the unstabilized version \eqref{eq:untwistedduality}, is explained by the fact that avoiding comes from lifting the sum over nontrivial torsion sections, smoothing comes from \emph{negative} the sum over the nontrivial characters.}
\begin{equation} \label{eq:xyreduction}
({}^c_b\delta)_* {}^{\xi_x}_y\Theta^{St}(n) = - (\star^*) {}^{\xi_y}_{x}\Theta^{St}(n)\in H^{n-1}(\Gamma, (\star^*){}^{\xi_y}_{x}\St(n))
\end{equation}
where we we are restricting ${}^c_d\delta$ to the summand ${}^{\xi_x}_{y}\St(n)$, which it precisely maps to the summand ${}^{\xi_y}_{x}\St(n)$. 

Let ${}^{\xi_x}_yT_n$ be the \emph{restricted} Tits building consisting of the induced subcomplex of $T_n$ on the vertices corresponding to subspaces which \emph{do not contain} the line $y$ modulo $d$, and \emph{are not contained} in the kernel of $\xi_x$ modulo c; let ${}^{\xi_y}_xT_n$ be similar, but switching the role of $x$ and $y$, $c$ and $b$. 

We write $C({}^{\xi_x}_yT_n)_\bullet$ and $C({}^{\xi_y}_xT_n)_\bullet$ for the respective doubly-augmented homology complexes of the form \eqref{eq:augtits}. Then the automorphism $\delta_T$ of $T_n$ restricts to a map ${}^{\xi_x}_yT_n\to {}^{\xi_y}_xT_n$, and hence a $\Gamma$-equivariant map 
\[
(\delta_T)_*:C({}^{\xi_x}_yT_n)_\bullet \to (\star^*)C({}^{\xi_y}_xT_n)_\bullet. 
\]
It remains only to prove that ${}^{\xi_x}_yT_n$ and ${}^{\xi_y}_xT_n$ have reduced homology concentrated only in top degree, such that that we have an identification ${}^{\xi_x}_y\St(n) \xrightarrow{\sim} \widetilde{H}_{n-2}({}^{\xi_x}_yT_n)$ fitting in a $\Gamma$-equivariant commutative diagram 
\begin{equation}
    \begin{tikzcd}
        {}^{\xi_x}_y\St(n) \arrow[r, "\sim"] \arrow[hookrightarrow]{d} & \widetilde{H}_{n-2}({}^{\xi_x}_yT_n) \arrow[d] \\
        \St(n) \arrow[r, "\sim"] & \widetilde{H}_{n-2}(T_n)
    \end{tikzcd}
\end{equation}
and similarly for ${}^{\xi_y}_xT_n$ and $\star\Gamma$. The conclusion would then follow by the same argument as the unstabilized case.

Since the two cases are symmetric in swapping $x$ and $y$, $c$ and $d$, we may as well focus only on the former case. Indeed, we note that the conditions defining ${}^{\xi_x}_y\St(n)$ precisely state that it is generated by symbols 
\[
[\ell_1]\wedge \ldots \wedge [\ell_n]
\]
for which no line is contained in the kernel of $\xi_x$ modulo $c$, and no $\le (n-1)$-subtuple span contains $y$ modulo $b$. Thus, the simplex map \eqref{eq:simplexmap} exists and makes sense when restricted to subspaces for the vertices of ${}^{\xi_x}_yT_n$, satisfying all the Steinberg relations by the same arguments in \cite{AR}. This definition for the top arrow evidently is $\Gamma$-equivariant and makes the diagram above commute, so the only thing that remains to prove is that it is an isomorphism. 

Let us write $\St(n,\F_c)$, $\St(n,\F_b)$ for the Steinberg representation for lines modulo $c$, $b$ respectively, and ${}^{\xi_x}\St(n,\F_c)$, ${}_y\St(n,\F_b)$ for the same with the corresponding local conditions from the definition of ${}^{\xi_x}_y\St(n)$.\footnote{These are (up to $\perp$-duality) the top-degree parts of the Orlik-Solomon complex for what is called the rank-$(n-1)$ \emph{affine geometry matroid} over $\F_c$ and $\F_b$, while the usual Steinberg module corresponds to the usual projective geometry matroid consisting of all lines.} Then by definition, we have a Cartesian square 
\begin{equation} \label{eq:dia1}
    \begin{tikzcd}
        {}^{\xi_x}_y\St(n) \arrow[r] \arrow[d] & \St(n) \arrow[d] \\
        {}^{\xi_x}\St(n,\F_c) \oplus {}_y\St(n,\F_b) \arrow[r] & \St(n,\F_c) \oplus \St(n,\F_b)
    \end{tikzcd}
\end{equation}

Likewise, let $T_n(\F_c),T_n(\F_b)$ be the Tits buildings built from subspaces of $\F_c^n$, $\F_b^n$, and ${}^{\xi_x}T_n(\F_c), {}_yT_n(\F_b)$ the induced subcomplexes on subspaces not contained in $\ker \xi_x$, respectively not containing $\F_by$. Then we have a Cartesian square of simplicial complexes
\begin{equation} \label{eq:dia2}
    \begin{tikzcd}
        {}^{\xi_x}_yT_n \arrow[r] \arrow[d] & T_n \arrow[d] \\
        {}^{\xi_x}T_n(\F_c) \oplus {}_yT_n(\F_b) \arrow[r] & T_n(\F_c) \oplus T_n(\F_b)
    \end{tikzcd}
\end{equation}
which leads to a Cartesian square also of the corresponding doubly-augmented homology complexes, and hence of the top-degree $\widetilde{H}_{n-2}$ groups, by the Künneth theorem. The left column of \eqref{eq:dia1} is isomorphic to the $\widetilde{H}_{n-2}$ of the left column of \eqref{eq:dia2} via the simplex maps \eqref{eq:simplexmap}; thus, if the same simplex map also induces isomorphisms
\begin{equation} \label{eq:localmaps}
{}^{\xi_x}\St(n,\F_c)\xrightarrow{\sim} \widetilde{H}_{n-2}({}^{\xi_x}T_n(\F_c)), {}_y\St(n,\F_b)\xrightarrow{\sim} \widetilde{H}_{n-2}({}_yT_n(\F_b))
\end{equation}
then we also have our desired identification ${}^{\xi_x}_y\St(n)\xrightarrow{\sim} \widetilde{H}_{n-2}({}^{\xi_x}_yT_n)$. The maps \eqref{eq:localmaps} are symmetric to one another under the automorphism $V\mapsto V^\perp$ (and replacing $c$ by $b$, etc.) so it suffices to prove the former. Indeed, this identification follows by the identification of the algebras $\mathscr{A}$ and $\mathscr{B}$ in \cite{OS}: in particular, see the remark on p. 3 relating the Orlik-Solomon complex ($\mathscr{A}$) to the construction of Steinberg modules from simplicial complexes in \cite{L} (agreeing with the construction of $\mathscr{B}$ in top degree); in the notation of loc. cit., our restricted Tits building is denoted $T(\F_c^n, \ker \xi_x)$, labelled case (b) on p. 551. 

\printbibliography

\end{document}